\newcommand{\N}{\mathbb{N}}
\newcommand{\BMO}{\operatorname{BMO}}
\newcommand{\BMOA}{\operatorname{BMOA}}
\newcommand{\RR}{\mathbb{R}}
\newcommand{\C}{\mathbb{C}}
\newcommand{\Z}{\mathbb{Z}}
\newcommand{\T}{\mathbb{T}}
\newcommand{\Hi}{{\mathscr{H}}^\infty}
\newcommand{\Real}{\operatorname{Re}}
\newtheorem{theorem}{Theorem}
\newtheorem{lemma}[theorem]{Lemma}
\newtheorem{corollary}[theorem]{Corollary}
\newcommand{\fin}{{ fin}}
\numberwithin{equation}{section}
\numberwithin{theorem}{section}
\begin{document}

\title[
Riesz projection and bounded mean oscillation for Dirichlet series]{
Riesz projection and bounded mean oscillation for Dirichlet series}

\subjclass[2010]{42B05, 42B30, 30B50, 30H35}
\author[S. Konyagin]{Sergei Konyagin}
\address{Sergei Konyagin, Steklov Institute of Mathematics,
8 Gubkin Street, Moscow,
119991, Russia}
\email{konyagin@mi-ras.ru}

\author[H. Queff\'{e}lec]{Herv\'{e} Queff\'{e}lec}

\address{Herv\'{e} Queff\'{e}lec, Universit\'{e} Lille Nord de France, USTL, Laboratoire Paul Painlev\'{e} UMR. CNRS 8524, F--59 655 Villeneuve d'Ascq
Cedex, France}\email{herve.queffelec@univ-lille.fr}
\author[E.~Saksman]{Eero Saksman}
\address{Eero Saksman \\
Department of Mathematics and Statistics \\
University of Helsinki \\
FI-00170 Helsinki \\
Finland} \email{eero.saksman@helsinki.fi}

\author[K.~Seip]{Kristian Seip}
\address{Kristian Seip, Department of Mathematical Sciences, Norwegian University of Science and Technology,
NO-7491 Trondheim, Norway} \email{kristian.seip@ntnu.no}

\thanks{Konyagin was supported from a grant to the Steklov International Mathematical
Center in the framework of the national project ''Science'' of the Russian Federation,
 Queff\'{e}lec was supported in part by the Labex CEMPI (ANR-11-LABX-0007-01), 
Saksman was supported in part by the Finnish Academy grant 1309940, and Seip was supported in part by the Research Council of Norway grant 275113.} 

\subjclass[2010]{30B50, 42B05, 42B30, 30H30, 30H35.}
\keywords{Dirichlet series, boundary behaviour}


\begin{abstract}
We prove that the norm of the Riesz projection from $L^\infty(\Bbb{T}^n)$ to $L^p(\Bbb{T}^n)$ is 
$1$ for all $n\ge 1$ only if $p\le 2$, thus solving a problem posed by Marzo and Seip in 2011. 
This shows that $H^p(\Bbb{T}^{\infty})$ does not contain the dual space of 
$H^1(\Bbb{T}^{\infty})$ for any $p>2$. We then note that the dual of $H^1(\Bbb{T}^{\infty})$ contains, via the Bohr lift, the space of Dirichlet series in $\operatorname{BMOA}$ of the right half-plane. We give several conditions showing how this $\BMOA$ space relates to other spaces of Dirichlet series. 
Finally, relating the partial sum operator for Dirichlet series to Riesz projection on $\T$, we compute its $L^p$ norm when $1<p<\infty$, and we use this result to show that the $L^\infty$ norm of the $N$th partial sum of a bounded Dirichlet series over $d$-smooth numbers is of order $\log\log N$.
\end{abstract}

\maketitle


\section{Introduction}
This paper is concerned with two different ways of transferring Riesz projection to the infinite-dimensional setting of Dirichlet series: first, by lifting it in a multiplicative way to the infinite-dimensional torus $\T^{\infty}$ and second, by using one-dimensional Riesz projection to study the partial sum operator acting on Dirichlet series. In either case, we will be interested in studying the action of the operator in question on functions in $L^p$ or $H^p$ spaces. 

By Fefferman's duality theorem \cite{F},  Riesz projection $P_{1}^{+}$ on the unit circle $\T$, formally defined as 
\[
P_{1}^{+}\big(\sum_{k\in \Z}c_k z^k\big):=\sum_{k\geq 0}c_k z^k ,
\]
maps $L^{\infty}(\T)$ into and onto $\BMOA(\T)$, i.e., the space of analytic functions of bounded mean oscillation. We may thus think of the image of $L^{\infty}(\T^{\infty})$ under Riesz projection on $\T^{\infty}$
(or equivalently, in view of the Hahn--Banach theorem, the dual space $H^1(\T^\infty)^*$) as a possible infinite-dimensional counterpart to $\BMOA(\T)$. 
This brings us to the second main topic of this paper which is to describe some of the main properties of this space.  

Our main result, given in Section~\ref{sec:Riesz}, verifies that Riesz projection does not map 
$L^{\infty}(\T^{\infty})$ into $H^p(\T^{\infty})$ for any $p>2$, whence $H^1(\T^\infty)^*$ is not embedded in $H^p(\T^{\infty})$ for any $p>2$. This result solves a problem posed in \cite{MS} and contrasts the familiar inclusion of  $\BMOA(\T)$ in $H^p(\T)$ for every $p<\infty$. 
The key idea of the proof is to first show that the norm of a Fourier multiplier 
$M_{\chi_A}:L^p(\T^n)\to L^q(\T^n)$ corresponding to a bounded convex domain $A$ 
in $\RR^n$ is dominated by the norm of the Riesz projection on $\T^{n+m}$ for $m$ sufficiently large, depending on $A$. Another crucial ingredient is Babenko's well-known lower estimate for spherical Lebesgue constants.

We then proceed to view $H^1(\T^\infty)^*$ as a space of Dirichlet series, employing as usual the Bohr lift. This leads us in Section \ref{sec:BMOA} to a distinguished subspace of $H^1(\T^\infty)^*$ which is indeed a ``true'' $\BMO$ space, namely the family of Dirichlet series that belong to $\BMOA$ of the right half-plane. By analogy with classical results on $\T$, we give several conditions for membership in this space, also for randomized Dirichlet series, and we describe how this $\BMOA$ space relates to some other function spaces of Dirichlet series.

In Section \ref{sec:compare}, we study Dirichlet polynomials of fixed length $N$ and compare the size of their norms in $H^p$, $\BMOA$, and the Bloch space.  One of these results is then applied in the final Section \ref{sec:partial}, where we turn to our second usage of Riesz projection. Here we present an explicit device for expressing the $N$th partial sum of a Dirichlet series in terms of one-dimensional Riesz projection and give some $L^p$ estimates for the associated partial sum operator. 

We refer the reader to \cite{HLS} and \cite{MAHE} (see especially \cite[Section 6]{MAHE}) for definitions and basics on Hardy spaces of Dirichlet series of Hardy spaces on $\T^\infty$.
\subsection*{Notation} We will use the notation $f(x)\ll g(x)$ if there is some constant $C>0$ such that $|f(x)|\le C|g(x)|$ for all (appropriate) $x$. If we have both $f(x)\ll g(x)$ and $g(x)\ll f(x)$, then we will write $f(x)\asymp g(x)$. If $\lim_{x\to \infty} f(x)/g(x)=1$, then we write $f(x)\sim g(x)$. 

\subsection*{Acknowledgements} We thank Ole Fredrik Brevig for allowing us to include an unpublished argument of his in this paper. We are also grateful to the referees for a number of valuable comments that helped improve the presentation.

\section{The norm of the Riesz projection from $L^\infty(\Bbb{T}^n)$ to $L^p(\Bbb{T}^n)$}\label{sec:Riesz}

The norm $\|f\|_p$ of a function $f$ in $L^p(\T^\infty)$ is computed with respect to 
Haar measure $m_\infty$ on $\T^\infty$, which is the countable product of one-dimensional normalized Lebesgue measures on $\T$.  We denote by $m_n$ the measure on $\T^n$ that is  the  $n$-fold product of the normalised one-dimensional measures, and $L^p(\T^n)$ is defined with respect to this measure.

We write the Fourier series of a function $f$ in $L^1(\T^n)$ on the $n$-torus $\T^n$ as
\begin{equation}
\label{def_expan}
f(\zeta) = \sum_{\alpha\in\Z^n} \hat f(\alpha) \zeta^{\alpha}.
\end{equation}
For a function $f$ in $L^1(\T^\infty)$ the Fourier series takes the form 
$f(\zeta) = \sum_{\alpha\in\Z^\infty_\fin} \hat f(\alpha) \zeta^{\alpha}$, where 
$\Z^\infty_\fin$ stands for infinite multi-indices such that all but finitely many 
indices are zero. We also set $\Z_+:=\{0,1,\dots\}$ so that $\Z_+^n$ (respectively  $\Z_+^\infty$)
is the positive cone in  $\Z^n$  (respectively  $\Z^\infty$).
The operator
\[ P_{n}^+ f(\zeta) := \sum_{\alpha\in\Z_+^n} \hat f(\alpha) \zeta^{\alpha} \]
is the Riesz projection on $\T^n$, and, as an operator on $L^2(\T^n)$, it has norm $1$.
If we instead view $P_{n}^+$ as an operator on $L^p(\T^n)$ for $1<p<\infty$, then 
a theorem of Hollenbeck and Verbitsky \cite{HV} asserts that its norm equals
$(\sin(\pi/p))^{-n}$. In an analogous way we denote by $P^+_\infty$ the Riesz projection on $\T^\infty,$ and  obviously  $P^+_\infty$ is bounded on $L^p(\T^\infty)$ only for $p=2$, when its norm equals 1.

Using this normalization, we let $\|P_n^+\|_{q,p}$ denote the norm of 
the operator $P_n^+:\,L^q(\T^n)\to L^p(\T^n)$ for $q\ge p$. By H\"older's 
inequality, $p\to \|P_n^+\|_{\infty,p}$ is a continuous and nondecreasing function, 
and obviously
$\|P^+_n\|_{\infty,p}\le(\sin(\pi/p))^{-n}$.  Consider the quantity
\[ p_n := \sup\left\{p\ge2:\,\|P_n^+\|_{\infty,p}\leq1\right\},\] which we following \cite{FIP}
call the critical exponent of $P_n^+$. 
 The critical exponent is well-defined since clearly 
$\|P_n^+\|_{\infty,2}=1$. We  also set 
\[p_\infty := \sup\left\{p\ge2:\,\|P_\infty^+\|_{\infty,p}\leq1\right\}. \] Defining $A_m f(z_1,z_2,\ldots):=f(z_1,\ldots, z_m,0,0,\ldots)$ and using that $\|A_m f\|_p\to \| f\|_p$ as $m\to \infty$ for every $f$ in $L^p(\T^\infty)$ and $1\leq p\leq \infty$, we see that in fact
\[
p_\infty= \lim_{n\to\infty} p_n.
\]
This also follows from the proof of Theorem \ref{n_to_infty} below.

Marzo and Seip \cite{MS} proved that the critical exponent of $P_1^+$ is $4$ and moreover that
\[ 2+2/(2^n-1)\le p_n< 3.67632\] for $n>1$. Recently, Brevig \cite{B} showed that $\lim_{n\to \infty} p_n \le 3.31138$. The following theorem 
settles the asymptotic behavior of the critical exponent of $P_n^+$ when $n\to\infty$.
\begin{theorem}\label{n_to_infty} We have $p_\infty=\lim_{n\to\infty} p_n=2$. 
\end{theorem}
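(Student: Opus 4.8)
The plan is to show that for every fixed $p>2$ one has $\|P_N^+\|_{\infty,p}>1$ once $N$ is large enough. Since $\|P_N^+\|_{\infty,p}\le\|P_{N+1}^+\|_{\infty,p}$ — view a function on $\T^N$ as a function on $\T^{N+1}$ that is independent of the last variable, which changes neither the norm nor the Riesz projection — and since $\|P_N^+\|_{\infty,2}\le1$ for all $N$, producing a single large $N$ with $\|P_N^+\|_{\infty,p}>1$ forces $p_N\le p$ for all large $N$, whence $\limsup_N p_N\le p$. As $p>2$ is arbitrary and trivially $p_N\ge2$, this gives $\lim_N p_N=2$, and then $p_\infty=2$ by the remark preceding the theorem. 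So it suffices to exhibit, for each $p>2$, one large $N$ with $\|P_N^+\|_{\infty,p}>1$, and in fact I will make this norm tend to $\infty$.

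The first step is a transference lemma: for a bounded convex body $A\subset\RR^n$ and the Fourier multiplier $M_{\chi_A}$ on $\T^n$ that keeps exactly the frequencies $\alpha\in\Z^n\cap A$, I claim that
\[
\|M_{\chi_A}\|_{L^\infty(\T^n)\to L^p(\T^n)}\le\|P_{n+m}^+\|_{\infty,p}
\]
for all sufficiently large $m$. Since $A\cap\Z^n$ is finite, its convex hull is a polytope $P=\{x:\langle u_i,x\rangle\le b_i,\ i=1,\dots,m\}$ with integer data and $P\cap\Z^n=A\cap\Z^n$; choosing $\beta\in\Z_+^n$ so large that $(P+\beta)\cap\Z^n\subset\Z_+^n$, I introduce $m$ new circle variables $w_1,\dots,w_m$ and lift $f\in L^\infty(\T^n)$ to
\[
F(\zeta,w):=\Big(\prod_{i=1}^m w_i^{\,b_i+\langle u_i,\beta\rangle}\Big)\,(\zeta^\beta f)(\eta),\qquad \eta_j:=\zeta_j\prod_{i=1}^m w_i^{-(u_i)_j}.
\]
For fixed $w$ the map $\zeta\mapsto\eta$ is a measure-preserving rotation of $\T^n$ and the prefactor is unimodular, so $\|F\|_{L^\infty(\T^{n+m})}=\|f\|_\infty$. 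A direct Fourier computation shows that the frequency $(\alpha,k)\in\Z^{n+m}$ of $F$ carries $k_i=b_i+\langle u_i,\beta\rangle-\langle u_i,\alpha\rangle$, so $P_{n+m}^+$ retains precisely the terms with $\alpha\in(P+\beta)\cap\Z^n$: the conditions $k_i\ge0$ encode the half-spaces, and $\alpha\ge0$ is then automatic. Hence $P_{n+m}^+F$ has the same product form as $F$ with $f$ replaced by $\zeta^\beta M_{\chi_A}f$, and Fubini together with the fibrewise measure preservation gives $\|P_{n+m}^+F\|_{L^p(\T^{n+m})}=\|M_{\chi_A}f\|_{L^p(\T^n)}$. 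The claimed inequality follows from $\|P_{n+m}^+F\|_p\le\|P_{n+m}^+\|_{\infty,p}\|F\|_\infty$.

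The second step feeds balls into this lemma. Fix $p>2$ and choose $n>p/(p-2)$, so that $(n-1)/2-n/p>0$. Take $A=B_R$, the Euclidean ball of radius $R$ in $\RR^n$; then $M_{\chi_{B_R}}=S_R$ is spherical partial summation, $S_Rf=D_R*f$ with $D_R=\sum_{|\alpha|\le R}\zeta^\alpha$. Its $L^\infty\to L^\infty$ norm equals the spherical Lebesgue constant $L_n(R)=\|D_R\|_1$, and Babenko's lower estimate gives $L_n(R)\gtrsim_n R^{(n-1)/2}$. Pick $f$ with $\|f\|_\infty\le1$ and $\|S_Rf\|_\infty\ge\tfrac12 L_n(R)$. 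Since $S_Rf$ is a trigonometric polynomial with spectrum in $B_R\cap\Z^n$, Nikolskii's inequality yields $\|S_Rf\|_\infty\lesssim_n R^{n/p}\|S_Rf\|_p$, whence
\[
\|S_R\|_{\infty,p}\ge\|S_Rf\|_p\gtrsim_n R^{-n/p}L_n(R)\gtrsim_n R^{(n-1)/2-n/p}\xrightarrow[R\to\infty]{}\infty .
\]
Combined with the transference lemma, $\|P_{n+m}^+\|_{\infty,p}\ge\|S_R\|_{\infty,p}$ for a suitable finite $m=m(R,n)$, so $\|P_N^+\|_{\infty,p}>1$ for some, hence by monotonicity all large, $N$.

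The genuinely essential input is Babenko's theorem; the remaining ingredients are soft. I expect the main obstacle to lie in the transference lemma — arranging the lift $F$ to (i) preserve the $L^\infty$ norm exactly, (ii) have its positive cone cut out precisely $A\cap\Z^n$ after the shift by $\beta$, and (iii) reproduce $M_{\chi_A}f$ up to a measure-preserving change of variables so that the $L^p$ norm is unchanged. The other delicate point is purely quantitative bookkeeping: one must work in the regime $n>p/(p-2)$, where the Lebesgue-constant growth $R^{(n-1)/2}$ outpaces the Nikolskii loss $R^{n/p}$. This is precisely why no fixed dimension suffices and the failure of the $L^\infty\to L^p$ bound is genuinely an $n\to\infty$ phenomenon.
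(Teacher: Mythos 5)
Your argument is correct, and its skeleton coincides with the paper's: the same transference lemma (realizing a convex-body Fourier multiplier on $\T^n$ as a restriction of $P_{n+m}^+$ by encoding the facet inequalities of the integral polytope $\operatorname{conv}(A\cap\Z^n)$ in $m$ auxiliary circle variables, exactly as in the paper's Lemmas \ref{equal_norms} and \ref{reduc_polyt}), and the same essential input, namely Ilyin--Babenko's lower bound $\|D_{R,n}\|_{L^1(\T^n)}\gg R^{(n-1)/2}$. Where you genuinely diverge is in how that lower bound is converted into $\|P_N^+\|_{\infty,p}>1$: the paper first dualizes, using $\|P_N^+\|_{\infty,p}=\|P_N^+\|_{p',1}$, and then exhibits an explicit test function (the product of one-dimensional Dirichlet kernels, whose $L^{p'}$ norm is $\asymp R^{n(1-1/p')}$ and whose spherical partial sum is exactly $D_{R,n}$), whereas you stay on the primal side, take the extremal function for the $L^\infty\to L^\infty$ multiplier norm ($=\|D_{R,n}\|_1$) and descend from $L^\infty$ to $L^p$ via Nikolskii's inequality at the cost of $R^{n/p}$. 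The two routes are dual to one another and produce the identical dimensional threshold, since $n>p/(p-2)$ is the same as $n>p'/(2-p')$; yours trades the duality step for Nikolskii's inequality, while the paper's explicit kernel makes every norm computation elementary. Two small points you should make explicit if you write this up: the polytope $\operatorname{conv}(B_R\cap\Z^n)$ must be non-degenerate (automatic for large $R$) so that it admits a facet description by finitely many integral half-spaces, and the Nikolskii inequality you invoke is the standard one for spectrum in the cube $[-R,R]^n\supset B_R\cap\Z^n$.
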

By considering a product of functions in disjoint variables, we obtain the following immediate consequence concerning
the Riesz projection $P_{\infty}^{+}$ on the infinite-dimensional torus, formally defined as 
\[ P_{\infty}^{+}\Big(\sum_{k\in \Z^{(\infty)}}c_\alpha z^\alpha\Big):=\sum_{\alpha\in \N^{(\infty)}}c_\alpha z^\alpha. \]
\begin{corollary}\label{inf_dim}  The Riesz projection $P_{\infty}^{+}$  is  not bounded from $L^q$ to $L^p$ when 
$2<p<q\le \infty$.
\end{corollary}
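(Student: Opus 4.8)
The plan is to deduce the corollary from Theorem~\ref{n_to_infty} by an amplification argument based on tensor products of functions living in disjoint blocks of variables. The starting observation is that both $P_\infty^+$ and the relevant norms factor multiplicatively: if $f$ and $g$ depend on two disjoint finite sets of coordinates, then $\widehat{fg}$ splits as the product of $\hat f$ and $\hat g$ over the two blocks, so the positive cone $\Z_+^\infty$ factors and $P_\infty^+(fg)=(P_\infty^+f)(P_\infty^+g)$; moreover, since functions of disjoint variables are independent for $m_\infty$, Fubini gives $\|fg\|_r=\|f\|_r\,\|g\|_r$ for every $1\le r\le\infty$.

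Next I would extract from Theorem~\ref{n_to_infty} a single function on finitely many variables whose $L^q\to L^p$ ratio exceeds $1$. Since $p>2=p_\infty=\lim_n p_n$, there is an $n$ with $p_n<p$, and then by the monotonicity of $p\mapsto\|P_n^+\|_{\infty,p}$ together with the definition of $p_n$ as a supremum we get $\|P_n^+\|_{\infty,p}>1$. As $\|h\|_q\le\|h\|_\infty$ on the probability space $\T^n$, the unit ball of $L^\infty$ sits inside that of $L^q$, whence $\|P_n^+\|_{q,p}\ge\|P_n^+\|_{\infty,p}>1$. Since this operator norm is a supremum of ratios exceeding $1$, I can pick $f$ on $\T^n$ with $r:=\|P_n^+f\|_p/\|f\|_q>1$.

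The amplification is then immediate. Place $k$ independent copies $f_1,\dots,f_k$ of $f$ on $k$ pairwise disjoint blocks of $n$ coordinates each, and set $F_k=f_1\cdots f_k$. By the multiplicativity above, $P_\infty^+F_k=\prod_{j}P_\infty^+f_j$ with the factors again in disjoint variables, so $\|P_\infty^+F_k\|_p=\|P_n^+f\|_p^{\,k}$ and $\|F_k\|_q=\|f\|_q^{\,k}$, whence
\[
\frac{\|P_\infty^+F_k\|_p}{\|F_k\|_q}=r^{\,k}\xrightarrow[k\to\infty]{}\infty .
\]
This forces the operator norm of $P_\infty^+\colon L^q\to L^p$ to be infinite, i.e.\ $P_\infty^+$ is not bounded from $L^q$ to $L^p$, as claimed.

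The only genuine content lies in Theorem~\ref{n_to_infty}; the corollary itself is routine once that is granted. The two points requiring care are the factorization $P_\infty^+(fg)=(P_\infty^+f)(P_\infty^+g)$ for disjoint variables—which rests on the multiplicative structure of the positive cone and should be checked against the definition of the multiplier—and the passage from the critical-exponent statement (a norm being $\le 1$ or $>1$) to a \emph{strict} inequality $r>1$ realized by an explicit test function, since it is precisely this strictness that powers the geometric blow-up $r^{\,k}$.
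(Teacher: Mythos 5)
Your proposal is correct and follows exactly the route the paper intends: the paper derives Corollary~\ref{inf_dim} from Theorem~\ref{n_to_infty} precisely ``by considering a product of functions in disjoint variables,'' which is the tensorization/amplification you carry out, and your extraction of a single test function with ratio $r>1$ from $p_n<p$ is the correct (and only) bridge from the critical-exponent statement. No gaps.
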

In turn, since the ``analytic'' dual of $H^1$ obviously equals $P^+_\infty(L^\infty(\T^\infty))$, we obtain a further interesting consequence.
\begin{corollary}\label{cor:dual_non_embedding}
The dual space $H^1(\T^\infty)^*$ is not  contained in $H^p(\T^\infty)$ for any $p>2.$
\end{corollary}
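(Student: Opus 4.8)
The plan is to argue by contradiction, turning the purely set-theoretic inclusion into a bounded operator statement via the closed graph theorem and then contradicting Corollary~\ref{inf_dim}. The starting point is the identification $H^1(\T^\infty)^* = P_\infty^+(L^\infty(\T^\infty))$ noted just above: writing the pairing as $\langle f,g\rangle = \int_{\T^\infty} f\bar g\, dm_\infty$, each $g\in L^\infty(\T^\infty)$ induces a functional on $H^1(\T^\infty)$ that depends only on $G:=P_\infty^+ g$, and the associated quotient norm satisfies $\|G\|_* = \inf\{\|h\|_\infty:\,P_\infty^+ h=G\}\le \|g\|_\infty$. Since $g\in L^\infty\subset L^2$ and $P_\infty^+$ is a contraction on $L^2(\T^\infty)$, we have $G\in H^2\subset H^1$; thus every element of $H^1(\T^\infty)^*$ is a genuine analytic function and the inclusion ``$H^1(\T^\infty)^*\subset H^p(\T^\infty)$'' is meaningful.

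Next I would suppose, for contradiction, that $H^1(\T^\infty)^*\subset H^p(\T^\infty)$ for some $p$ with $2<p<\infty$; the case $p=\infty$ then follows automatically, since $H^\infty\subset H^p$ for every finite $p$. The aim is to show that the inclusion map $\iota\colon H^1(\T^\infty)^*\to H^p(\T^\infty)$ is bounded, which I would obtain from the closed graph theorem. Both spaces are Banach spaces, so it suffices to check that $\iota$ has closed graph: if $G_n\to G$ in $H^1(\T^\infty)^*$ and $G_n\to F$ in $H^p(\T^\infty)$, then testing the dual norm against the monomials $\zeta^\alpha$ (each of unit $H^1$ norm) yields $|\hat{G_n}(\alpha)-\hat{G}(\alpha)|\le \|G_n-G\|_*\to 0$, while $H^p$-convergence (hence $H^1$-convergence, as $p\ge 1$) gives $\hat{G_n}(\alpha)\to\hat{F}(\alpha)$, for every $\alpha\in\Z_+^\infty$. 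Therefore $\hat{G}(\alpha)=\hat{F}(\alpha)$ for all such $\alpha$, and since both functions are analytic we get $G=F$. The closed graph theorem then provides a constant $C$ with $\|G\|_{H^p}\le C\|G\|_*$ for all $G\in H^1(\T^\infty)^*$.

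Finally, combining this with the bound $\|P_\infty^+ g\|_*\le \|g\|_\infty$ established above, for every $g\in L^\infty(\T^\infty)$ I obtain $\|P_\infty^+ g\|_{H^p}\le C\|g\|_\infty$, that is, $P_\infty^+$ maps $L^\infty(\T^\infty)$ boundedly into $H^p(\T^\infty)\subset L^p(\T^\infty)$. This contradicts Corollary~\ref{inf_dim} (taken with $q=\infty$), which asserts precisely that $P_\infty^+$ is unbounded from $L^\infty$ to $L^p$ when $p>2$, completing the proof.

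The only genuinely delicate point is the closed graph verification: one must be careful that the elements of $H^1(\T^\infty)^*$ are honest analytic $L^1$ functions (secured by the contraction property of $P_\infty^+$ on $L^2$) and that convergence in each of the two norms forces agreement of all Fourier coefficients. Once these are in place the remaining steps are formal, reducing the corollary to the already-proved Corollary~\ref{inf_dim}.
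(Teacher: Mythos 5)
Your argument is correct and follows essentially the same route as the paper, which states the corollary as an immediate consequence of the identification $H^1(\T^\infty)^*=P_\infty^+(L^\infty(\T^\infty))$ together with Corollary~\ref{inf_dim}. You have merely made explicit the routine closed-graph step that the paper leaves implicit, and that step is carried out correctly.
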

The latter result has an immediate translation in terms of Hardy spaces of Dirichlet series, as will be recorded in
Corollary \ref{cor:dual_non_embedding2} below.

The proof of  Theorem~\ref{n_to_infty} deals with the (pre)dual operator $P^+_\infty:L^q(\T^n)\to L^1(\T^n)$, where $q<2.$ The idea is to prove first that
for the characteristic function  $\chi_A$ of a bounded convex domain $A$ in  $\RR^n$, the norm of the Fourier multiplier $M_{\chi_{A}}$ on $\T^n$ is actually bounded by that of $P^+_{n+m}$ for large enough $m$, depending on $A$. This key observation will be applied when $A$ is a large ball $B(0,R)$ in $\RR^n$, and the desired result is deduced by invoking the following  result of  Ilyin \cite{I1}. 
\begin{theorem}\label{thm:babenko}
The circular Dirichlet kernel
\[
D_{R,n}(\zeta)  := \sum_{\alpha\in \Z^n:\,\|\alpha\|\le R}   \zeta^{\alpha}
\]
on $\T^n$ satisfies  $\| D_{R,n}\|_{L^1(\T^n)}\geq cR^{(n-1)/2},$ where $c=c(n)>0$ and $\|\cdot\|$ stands for the standard Euclidean norm.
\end{theorem}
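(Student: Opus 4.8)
The plan is to reduce the trigonometric sum to the continuous Fourier transform of the ball and then to mimic the classical one-dimensional computation of Lebesgue constants. Identifying $\T^n$ with $Q:=[-1/2,1/2)^n$ via $\zeta_j=e^{2\pi i x_j}$, I would first apply the Poisson summation formula to the indicator $\chi_{B(0,R)}$ to obtain the periodization identity $D_{R,n}(x)=\sum_{k\in\Z^n} K_R(x-k)$, where $K_R(y):=\int_{B(0,R)} e^{2\pi i y\cdot\xi}\,d\xi$ is the continuous spherical kernel. Since $\chi_{B(0,R)}$ is not smooth, I would restrict to radii $R$ avoiding the discrete set $\{\sqrt m\,:\,m=\alpha_1^2+\cdots+\alpha_n^2\}$ with lattice points on the sphere $\{\|\alpha\|=R\}$; this loses nothing, since $D_{R,n}$ is constant in $R$ between consecutive such values and these values only increase $\|D_{R,n}\|_1$. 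The point of this step is that $K_R$ has the explicit Bessel representation $K_R(y)=\|y\|^{-n/2}R^{n/2}J_{n/2}(2\pi R\|y\|)$ up to a universal constant, whose large-argument asymptotics $J_\nu(t)\sim\sqrt{2/(\pi t)}\cos(t-\nu\pi/2-\pi/4)$ yield the key oscillatory estimate $|K_R(y)|\asymp R^{(n-1)/2}\|y\|^{-(n+1)/2}\,|\cos(2\pi R\|y\|-\gamma_n)|$ in the range $\|y\|\gtrsim 1/R$.

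Second, I would extract the main contribution from the diagonal term $k=0$. Passing to polar coordinates, $\int_{1/R\le\|y\|\le\rho}|K_R(y)|\,dy$ is comparable to $R^{(n-1)/2}\int_{1/R}^{\rho} r^{(n-3)/2}\,|\cos(2\pi Rr-\gamma_n)|\,dr$ for a fixed small $\rho$. Splitting $[1/R,\rho]$ into the $\asymp R$ subintervals of length $\asymp 1/R$ on which $|\cos(2\pi Rr-\gamma_n)|\ge 1/2$, exactly as in the one-dimensional proof that the Lebesgue constant is $\gg\log N$, and summing the contributions $\asymp R^{(n-1)/2}r^{(n-3)/2}$, one checks that $\int_{1/R}^{\rho}r^{(n-3)/2}\,dr$ stays bounded below by a positive constant uniformly in $R$ for every $n\ge 2$ (the case $n=1$ being classical). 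This gives the lower bound $\int_{1/R\le\|y\|\le\rho}|K_R(y)|\,dy\ge c(n)R^{(n-1)/2}$ for the diagonal term alone.

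The main obstacle is that the periodization is not dominated by its diagonal term. A quick computation via Plancherel, $\sum_{k\neq 0}\|K_R(\cdot-k)\|_{L^2(Q)}^2\asymp\|K_R\|_{L^2(\RR^n\setminus Q)}^2\asymp R^{n-1}$, shows that the off-diagonal tail $\sum_{k\neq 0}K_R(\cdot-k)$ already has $L^1(Q)$-norm of the same order $R^{(n-1)/2}$ as the main term, so it cannot be discarded by the triangle inequality. To handle this I would argue by duality: testing $D_{R,n}$ against $g:=\overline{\operatorname{sgn} K_R}$ cut off to the annulus $\{1/R\le\|y\|\le\rho\}$ (so that $\|g\|_\infty\le 1$), the diagonal term reproduces $\int|K_R|\gtrsim R^{(n-1)/2}$, while each cross term $\int K_R(\cdot-k)\,\overline g$ is an oscillatory integral with phase $2\pi R\big(\|y-k\|-\|y\|\big)$. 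The crux is to show that these cross terms do not conspire to cancel the diagonal contribution; this requires a stationary/non-stationary phase analysis, the delicate point being the antipodal directions $y/\|y\|=-k/\|k\|$ where the phase is stationary and the gain from oscillation is weakest. I expect this control of the periodization tail — where soft absolute-value estimates fail and genuine cancellation, plausibly drawing on the arithmetic of the lattice $\Z^n$, must be invoked — to be the entire substance of the theorem, the reduction of the first two paragraphs being routine. Alternatively, one may simply invoke the known precise asymptotics of the spherical Dirichlet kernel from the lattice-point literature, of which the stated inequality is an immediate consequence.
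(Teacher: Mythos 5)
The paper does not actually prove Theorem~\ref{thm:babenko}: it imports the bound from the literature, attributing it to Ilyin \cite{I1}, noting Babenko's preprint \cite{BA} as an alternative source, and pointing to Liflyand's survey \cite{Li1} for a simple proof. So your closing sentence --- simply invoking the known asymptotics of the spherical Dirichlet kernel --- coincides with what the paper itself does. Judged as a proof, however, your sketch has exactly the gap you yourself identify, and it is a real one. The first two paragraphs (Poisson summation, the Bessel representation of the Fourier transform of the ball, and the lower bound $\int_{\{1/R\le\|y\|\le\rho\}}|K_R|\gg R^{(n-1)/2}$ for the diagonal term) are correct and standard. But the control of the off-diagonal periodization terms is deferred to an unexecuted stationary-phase analysis which you concede is ``the entire substance of the theorem.'' There is also a technical slip in the paragraph quantifying the obstruction: your Plancherel identity computes $\sum_{k\neq 0}\|K_R(\cdot-k)\|_{L^2(Q)}^2$, which is \emph{not} $\bigl\|\sum_{k\neq 0}K_R(\cdot-k)\bigr\|_{L^2(Q)}^2$ --- the restricted translates are far from orthogonal --- so even your claim that the tail has $L^1(Q)$-norm of order $R^{(n-1)/2}$ is not established (though your conclusion that the tail cannot be discarded by absolute values is correct, since $\sum_{k\neq 0}|k|^{-(n+1)/2}$ diverges over $\Z^n$ for $n\ge 2$).

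For what it is worth, the short proofs in the literature (see \cite{Li1}) bypass the periodization entirely: by duality one tests $D_{R,n}$ against a single resonating function of the form $w(x)=e^{2\pi i R\|x\|}\,\|x\|^{-(n-1)/2}\psi(\|x\|)$ with $\psi$ a fixed radial bump supported away from the origin inside the fundamental cell. Each term $\int e^{2\pi i(\alpha\cdot x-R\|x\|)}\cdots\,dx$ is then a stationary-phase integral whose critical point occurs only along the ray through $\alpha$ and only when $\|\alpha\|$ is close to $R$; the $\asymp R^{n-1}$ lattice points within bounded distance of the sphere each contribute $\asymp R^{-(n-1)/2}$ with essentially aligned phases, while interior points are negligible by non-stationary phase, giving $R^{n-1}\cdot R^{-(n-1)/2}=R^{(n-1)/2}$ with no cancellation to fight. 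If you prefer to keep your architecture, the standard repair is to localize the triangle inequality to a thin annulus $\{\rho/2\le\|x\|\le\rho\}$ with $\rho=\rho(n)$ small but fixed: there the diagonal term contributes $\asymp\rho^{(n-1)/2}R^{(n-1)/2}$, while Cauchy--Schwarz bounds the tail's contribution by $\rho^{n/2}\|D_{R,n}-K_R\|_{L^2(Q)}$, so it suffices to show $\|D_{R,n}-K_R\|_{L^2(Q)}=O(R^{(n-1)/2})$ --- an estimate that encodes the trivial lattice-point discrepancy bound and still requires a genuine computation. Either way, an idea beyond what is written in your sketch is needed to close the argument.
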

Babenko's  famous 1971 preprint (see \cite{BA,Li2}) gives another proof. Moreover, it establishes a comparable upper bound, which can also be found in Ilyin and Alimov's paper \cite{I2}. We refer to  Liflyand's review \cite{Li1}  for further information on the related literature and for a  simple proof of Theorem \ref{thm:babenko}.
\begin{proof}[Proof of Theorem \ref{n_to_infty}.]
Fix $n\geq 2$ and  $\alpha =(\alpha_1,\dots,\alpha_n)\in\Z^n$  together with $\beta^j\in\Z^n$ and $ b_j\in\Z$ for $j=1,\dots,m$, where $m\in\N$ is also fixed. We consider
$n+m$ linear functions $\phi_j:\,\Z^n\to\Z$, with  $j=1,\dots,n+m$, where
\begin{align*}
\phi_j(\alpha) & := \alpha_j, \quad j=1,\dots,n, \\
\phi_{n+j}(\alpha) &:= (\alpha,\beta^j) + b_j,\quad j=1,\dots,m. \end{align*}

We associate with any trigonometric polynomial $f$ as in \eqref{def_expan} (that is, any $f$ of the form \eqref{def_expan} with finitely many non-zero terms) 
the function
\[ g(\eta) := \sum_{\alpha\in\Z^n} \hat f(\alpha) \prod_{j=1}^{n+m}
\eta_j^{\phi_j(\alpha)},\]
where $\eta = (\eta_1,\dots,\eta_{n+m})\in\T^{n+m}$.

\begin{lemma}\label{equal_norms} We have $\|g\|_p = \|f\|_p$ for $0<p\le \infty$.
\end{lemma}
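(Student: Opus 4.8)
The plan is to show that $|g|$ is nothing but a measure-preserving ``rotated copy'' of $|f|$, after which the equality of the $L^p$ norms follows immediately from the translation invariance of Haar measure on $\Tn$. The only real work is the exponent bookkeeping that reveals this structure; there is no genuine obstacle.

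First I would split the $(n+m)$-torus variable as $\eta = (\zeta,w)$ with $\zeta\in\Tn$ and $w\in\T^m$. Using $\phi_j(\alpha)=\alpha_j$ for $j\le n$ and $\phi_{n+j}(\alpha)=(\alpha,\beta^j)+b_j$ for $j\le m$, the definition of $g$ rewrites as
\[
g(\zeta,w)=\Big(\prod_{j=1}^m w_j^{b_j}\Big)\sum_{\alpha\in\Z^n}\hat f(\alpha)\,\zeta^\alpha\prod_{j=1}^m w_j^{(\alpha,\beta^j)}.
\]
The prefactor $\prod_{j=1}^m w_j^{b_j}$ has modulus $1$ since $w_j\in\T$, so it disappears upon passing to $|g|$.

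Next I would absorb the remaining $w$-factors into a coordinatewise rotation of $\zeta$. Setting $u_i:=\prod_{j=1}^m w_j^{\beta^j_i}\in\T$ for $i=1,\dots,n$ and $u=u(w):=(u_1,\dots,u_n)\in\Tn$, one checks directly that $\prod_{j=1}^m w_j^{(\alpha,\beta^j)}=u^\alpha$, whence
\[
\zeta^\alpha\prod_{j=1}^m w_j^{(\alpha,\beta^j)}=(\zeta u)^\alpha,
\]
where $\zeta u$ denotes the coordinatewise product $(\zeta_1 u_1,\dots,\zeta_n u_n)\in\Tn$. It follows that $|g(\zeta,w)|=|f(\zeta u(w))|$ for all $(\zeta,w)$.

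Finally, for each fixed $w$ the map $\zeta\mapsto\zeta u(w)$ is a rotation of $\Tn$, so invariance of $m_n$ gives $\int_{\Tn}|f(\zeta u(w))|^p\,dm_n(\zeta)=\|f\|_p^p$ for $0<p<\infty$; integrating over $w\in\T^m$ then yields $\|g\|_p^p=\|f\|_p^p$. The case $p=\infty$ is identical, with the integral replaced by an essential supremum: since $\zeta\mapsto\zeta u(w)$ exhausts $\Tn$ for each fixed $w$, the essential supremum of $|g|$ over $\T^{n+m}$ equals that of $|f|$ over $\Tn$. The one point requiring care is the verification that the extra $w$-variables collapse into the single coordinatewise rotation $u(w)$, which is where the linear structure of the $\phi_{n+j}$ is used.
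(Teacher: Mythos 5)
Your proof is correct and takes essentially the same approach as the paper's: you factor $g(\zeta,w)$ as a unimodular prefactor times $f$ evaluated at the coordinatewise rotation $\zeta\mapsto\zeta u(w)$ and then invoke the rotation invariance of Haar measure on $\T^n$; your $u_i(w)$ is exactly the paper's $\psi_i(\eta'')$ and your prefactor is its $\psi_0(\eta'')$. Nothing further is needed.
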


\begin{proof} Set
\[ \eta' :=(\eta_1,\dots,\eta_n),\quad \eta'' :=(\eta_{n+1},\dots,\eta_{n+m}).\]
We have
\[ g(\eta) = \psi_0(\eta'') \sum_{\alpha\in\Z^n} \hat f(\alpha)\prod_{j=1}^n (\psi_j(\eta'')\eta_j)^{\alpha_j},\]
where  
\[
\psi_0(\eta''):=\prod_{k=1}^m\eta_{n+k}^{b_k}\; ,\quad\textrm{and}\quad  \psi_j(\eta''):=\prod_{k=1}^m\eta_{n+k}^{\beta^{k}_j}\;\;\textrm{for}\;\; j=1,\ldots, n.
\]
We clearly have $\psi_j(\eta'')\in\T$ 
for $j=0,\dots,n$. For a fixed $\eta''$ in $\T^m$ 
 consider $g$ as a function of $\eta'$:
\[ g(\eta) =g_{\eta''}(\eta').\]
Set $\tilde\eta'=(\tilde\eta_1,\dots,\tilde\eta_n)$, where
$\tilde\eta_j = \psi_j(\eta'')\eta_j$ for $j=1,\dots,n$. We see that
\[ g_{\eta''}(\eta') = \psi_0(\eta'') f(\tilde\eta').\]
We therefore obtain the asserted isometry:
\[ \|g_{\eta''}\|_p = \|f\|_p \]
\end{proof}

By duality, for any positive integer $N$ and $p>2$, we have 
$\|P_N^+\|_{\infty,p} = \|P_N^+\|_{p',1}$ where 
$p'=p/(p-1)$. Hence, to prove Theorem \ref{n_to_infty},
we have to show that for any $q$ in $(1,2)$ there exist a positive integer $N$
and $g$ in $L^q(\T^N)$ such that 
\begin{equation}
\label{refor_Theorem}
\|g\|_q=1,\quad \|P_N^+ g\|_1 >1.
\end{equation}
Indeed, by duality, this will imply the existence of a function $h$ in $L^\infty(\T^n)$
such that 
\[ \|h\|_\infty = 1,\quad \|P_N^+(h)\|_{q'} >1,\]
where $q' = q/(q-1)$. Since $q<2$ is arbitrary, Theorem~\ref{n_to_infty} then follows.

For a bounded set $E$ in $\mathbb{R}^n$ and a function $f$ in $L^{1}(\T^n)$, we consider 
a partial sum of the Fourier series of $f$:
\[ \big(S_Ef\big)(\zeta) := \sum_{\alpha\in E\cap\Z^n} \hat f(\alpha) \zeta^{\alpha}.\]
Note that  as an operator, $S_E$ coincides with  the Fourier multiplier $M_{\chi_E}$.
We say that a polytope $E$ in $\mathbb{R}^n$ is non-degenerate if it is not contained in
a hyperplane.

\begin{lemma}\label{reduc_polyt} Let $1<q<2$. Assume that there is a 
non-degenerate convex polytope $E$ in $\mathbb{R}^n$ with integral vertices such 
that, for some $f$ in $L^q(\T^n)$ with a finite set of non-zero Fourier 
coefficients $\hat f(\alpha)$, we have
\[ \|f\|_q=1,\quad \|S_E(f)\|_1 >1.\]
Then there are a positive integer $N\in\N$ and a function 
$g$ in $L^q(\T^N)$ satisfying \eqref{refor_Theorem}.
\end{lemma}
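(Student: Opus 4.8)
The plan is to realize the partial sum operator $S_E=M_{\chi_E}$ as the Riesz projection $P_N^+$ applied to the isometric lift of $f$ supplied by Lemma \ref{equal_norms}. The guiding observation is that a bounded convex polytope is exactly the locus where finitely many affine functions are simultaneously nonnegative, and that simultaneous nonnegativity of affine functions of $\alpha$ is precisely what $P_N^+$ detects through the exponents $\phi_j(\alpha)$ appearing in the lift.

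First I would fix an integral half-space description of $E$. Since $E$ has integral vertices, each facet lies in a hyperplane with a primitive integral normal $\beta^j\in\Z^n$ and an integral offset, so that $E=\{x\in\RR^n:\,(x,\beta^j)+b_j\ge 0,\ j=1,\dots,m\}$ with $b_j\in\Z$ and $m$ the number of facets. (Non-degeneracy of $E$ is not actually needed here; a lower-dimensional $E$ would simply be cut out by including opposite half-spaces.) I would then translate $E$ into the orthant $[0,\infty)^n$. This is essential because the first $n$ lifting functions in Lemma \ref{equal_norms} are the coordinate projections $\phi_j(\alpha)=\alpha_j$, so $P_N^+$ will unavoidably impose $\alpha_j\ge 0$, and we need these constraints to be vacuous on $E$. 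As $E$ is bounded, there is $\gamma\in\Z^n$ with $E+\gamma\subseteq[0,\infty)^n$, and replacing $f$ by $\zeta^{\gamma}f$ and $E$ by $E+\gamma$ changes neither $\|f\|_q$ nor $\|S_Ef\|_1$, since multiplication by the unimodular monomial $\zeta^{\gamma}$ is an isometry and $S_{E+\gamma}(\zeta^{\gamma}f)=\zeta^{\gamma}S_Ef$. The new offsets $b_j-(\beta^j,\gamma)$ remain integral, so I may assume from the start that $E\subseteq[0,\infty)^n$.

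With $\beta^j,b_j$ fixed as above, I would form the lift $g$ of $f$ given by Lemma \ref{equal_norms} with $N=n+m$, so that $g$ carries the coefficient $\hat f(\alpha)$ at the multi-index with entries $\alpha_1,\dots,\alpha_n$ and $(\alpha,\beta^j)+b_j$ for $j=1,\dots,m$. The projection $P_N^+$ keeps $\hat f(\alpha)$ exactly when all $n+m$ of these entries are nonnegative, i.e.\ when $\alpha\in[0,\infty)^n$ and $\alpha\in E$; since $E\subseteq[0,\infty)^n$, this is just the condition $\alpha\in E\cap\Z^n$. Hence $P_N^+g$ is precisely the Lemma \ref{equal_norms} lift of $S_Ef$. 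Applying the isometry of that lemma once to $f$ and once to $S_Ef$ then yields $\|g\|_q=\|f\|_q=1$ and $\|P_N^+g\|_1=\|S_Ef\|_1>1$, which is \eqref{refor_Theorem}.

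The one point I expect to require genuine care is the exponent matching in the last paragraph: one must verify that positivity of all $n+m$ exponents encodes membership in $E$ and nothing more, using on the one hand that $E$ equals the intersection of the chosen $m$ half-spaces (so every $\alpha\notin E$ is dropped) and on the other hand that $E\subseteq[0,\infty)^n$ (so the unavoidable coordinate constraints $\alpha_j\ge 0$ are redundant). This is exactly why the reduction to $E\subseteq[0,\infty)^n$ is substantive rather than cosmetic.
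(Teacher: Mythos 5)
Your proposal is correct and follows essentially the same route as the paper: translate $E$ into the positive orthant, describe it by integral half-spaces, lift via Lemma~\ref{equal_norms}, and observe that $P_N^+$ retains exactly the coefficients indexed by $E\cap\Z^n$. The only (harmless) difference is that the paper also shifts the support of $\hat f$ into the positive orthant, whereas you correctly note that this is redundant once $E\subseteq[0,\infty)^n$, since the coordinate constraints imposed by $P_N^+$ are then already implied by membership in $E$.
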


\begin{proof} Let $e:=(1,1,\ldots, 1)\in\Z^+_n$. By considering instead  $E+Ne$ and $(\eta_1\ldots \eta_n)^Nf(\eta)$ with large enough $N\in \N$, if necessary, we may assume that $E$ and the Fourier coefficients of $f$ satisfy
\begin{equation}
\label{assum_posit}
E\subset\Z_n^+\quad\textrm{and}\quad \hat f(\alpha)\not=0 \;\;\Rightarrow\;\; \alpha_j\geq 0 \;\,\textrm{for all}\;\;j=1,\ldots , n.
\end{equation}
It is known that $E$ is the intersection of closed semispaces,
bounded by the hyperplanes containing the faces of $E$ of dimension $n-1$
(see \cite[Ch. 1, Thm. 5.6]{L}). All hyperplanes are determined by
their intersections with the set of the vertices of $E$. Since the 
vertices are integral,  the semispaces can be defined by inequalities
\[ (\alpha,\beta^j) + b_j\ge0,\quad j=1,\dots,m,\]
where $\beta^j\in\Z^n, b_j\in\Z$ for $j=1,\dots,m$.
Thus
\[ E=\bigcap_{j=n+1}^{n+m} \{\alpha\in\mathbb{R}^m:\,\phi_{j}(\alpha)\ge0\},\]
where $\phi_{j}(\alpha) = (\alpha,\beta^{j-n}) + b_{j-n},\quad j=n+1,\dots,n+m$.

We set $N:=n+m$ and construct the function $g$ from $f$ as in Lemma~\ref{equal_norms}. 
Using that lemma, we get
\[ \|g\|_q = \|f\|_q =1,\quad \|P_N^+ g\|_1 = \|S_E(f)\|_1 >1, \]
and Lemma \ref{reduc_polyt} follows.  \end{proof}

To construct an integer $n$, a polytope $E$, and a function $f$
satisfying Lemma \ref{reduc_polyt}, we take first $n$ satisfying
the inequality
\begin{equation}
\label{choice_n}
n>q/(2-q).
\end{equation}
For sufficiently large $R$, let $E$ be the convex hull of the integral 
points contained in the euclidean ball $\{\alpha\in\mathbb{R}^n:\,\|\alpha\|\le R\}$. Hence for any function $f$ in $L^{1}(\T^n)$,
we have
\[ (S_Ef)(\zeta) = \sum_{\alpha\in \Z^n:\,\|\alpha\|\le R} \hat f(\alpha) \zeta^{\alpha}.\]

Recall the circular Dirichlet kernel from Theorem \ref{thm:babenko}:
\[ D_{R,n}(\zeta) = \sum_{\alpha\in \Z^n:\,\|\alpha\|\le R}  \zeta^{\alpha}.\]
Define the function
$\displaystyle \widetilde f (\zeta):=\sum_{|\alpha_1|\le R}\dots \sum_{|\alpha_n|\le R}\zeta^\alpha$
so that
$S_R \widetilde f = D_{R,n}.$
It is easy to see that
\[ \|\widetilde f\|_q = \left\|\sum_{|\alpha_1|\le R} \zeta_1^{\alpha_1}\right\|_q^n
\le C R^{n(1-1/q)},\]
where $C= C(q,n) >0$. In view of \eqref{choice_n}, which amounts to $\frac{n-1}{2}>n(1-\frac{1}{q}),$ and by recalling Theorem \ref {thm:babenko}, we obtain
\[ \|S_E(\widetilde f)\|_1 > \|\tilde f\|_q. \]
for sufficiently large $R$. Taking
$f := \widetilde f\big/\|\widetilde f\|_q,$
we get a function $f$ satisfying the conditions of Lemma \ref{reduc_polyt}, and
this completes the proof of Theorem \ref{n_to_infty}. 
\end{proof}

\section{The space of Dirichlet series in $\BMOA$}\label{sec:BMOA}
The result of the preceding section is purely multiplicative in the sense that it only involves analysis on the product space $\T^n$. Function spaces
on $\T^{n}$ or on $\T^{\infty}$ may however, by a device known as the Bohr lift (see below for details), also be viewed as spaces of Dirichlet series. From an abstract point of view (see for example \cite[Ch. 8]{R}), this means that we equip our function spaces with an additive structure that reflects the additive order of the multiplicative group of positive rational numbers $\mathbb{Q}_+$. This results in interesting interaction between function theory in polydiscs and half-planes that sometimes involves nontrivial number theory.

As we will see in the next subsection, this point of view leads us naturally from $H^1(\T^{\infty})^*$ to the space of ordinary Dirichlet series $\sum_{n=1}^{\infty} a_n n^{-s}$ that belong to $\BMOA$, i.e., the space of analytic functions  
$f(s)$ in the right half-plane $\Real s> 0$ satisfying
\begin{equation} \label{eq:int1} \sup_{\sigma>0} \int_{-\infty}^{\infty} \frac{|f(\sigma+it)|^2}{1+\sigma^2+t^2} dt < \infty \end{equation} and
\[\|f\|_{\BMO} := \sup_{I \subset \mathbb{R}} \frac{1}{|I|}\int_{I}\left|f(it)-\frac{1}{|I|}\int_I f(i\tau)\,d\tau\right|\,dt < \infty.\]
Here the supremum is taken over all finite intervals $I$; \eqref{eq:int1} means that $g(s):=f(s)/(s+1)$ belongs to the Hardy space $H^{2}(\C_0)$ of the right half-plane $\mathbb{C}_0$, and then
$f(it):=\lim_{\sigma\to 0^+} f(\sigma+it)$ exists for almost all real $t$ by Fatou's theorem applied to $g$.  We will use the notation $\BMOA \cap \mathcal{D}$ for this $\BMOA$ space, where $\mathcal{D}$ is the class of functions expressible as a convergent Dirichlet series in some half-plane $\Real s > \sigma_0$. 


The space $\BMOA \cap \mathcal{D}$ arose naturally in a recent study of multiplicative Volterra operators \cite{BPS}. We refer  to that paper for a complementary discussion of bounded mean oscillation in the context of Dirichlet series. By combining \cite[Cor. 6.4]{BPS} and \cite[Thm. 5.3]{BPS}, we may conclude that $\BMOA \cap \mathcal{D}$ can be viewed, via the Bohr lift, as a subspace of $H^{1}(\T^{\infty})^\ast$. This inclusion may however be proved in a direct way by an argument that we will present in the next subsection.
\subsection{The Bohr lift and the inclusion $\BMOA \cap \mathcal{D}\subset (\mathcal{H}^1)^*$}
We begin by considering an ordinary Dirichlet series of the form
\begin{equation} \label{eq:f} f(s)=\sum_{n=1}^\infty a_n n^{-s}. \end{equation}
By the transformation $z_j=p_j^{-s}$ (here $p_j$ is the $j$th prime number) and the fundamental theorem of arithmetic, we have the Bohr correspondence, 
\begin{equation}\label{eq:bohr} 
	f(s):= \sum_{n=1}^\infty a_{n} n^{-s}\quad\longleftrightarrow\quad \mathcal{B}f(z):=\sum_{n=1}^{\infty} a_n z^{\kappa(n)}, 
\end{equation}
where $\kappa(n)=(\kappa_1,\ldots,\kappa_j,0,0,\ldots)$ is the multi-index such that $n = p_1^{\kappa_1} \cdots p_j^{\kappa_j}$. The transformation $\mathcal{B}$ is known as the Bohr lift. For $0 < p < \infty$, we define $\mathcal{H}^p$ as the space of Dirichlet series $f$ such that $\mathcal{B}f$ is in $H^p(\mathbb{T}^\infty)$, and we set 
\[\|f\|_{\mathcal{H}^p} := \|\mathcal{B}f\|_{H^p(\mathbb{T}^\infty)} = \left(\int_{\mathbb{T}^\infty} |\mathcal{B}f(z)|^p\,dm_\infty(z)\right)^\frac{1}{p}.\]
Note that for $p=2$, we have
\[\|f\|_{\mathcal{H}^2} = \left(\sum_{n=1}^\infty |a_n|^2\right)^\frac{1}{2}.\]
In terms of the spaces ${\mathcal{H}^p}$, Corollary \ref{cor:dual_non_embedding} takes the form
\begin{corollary}\label{cor:dual_non_embedding2}
The dual space $(\mathscr{H}^1)^*$ is not  contained
in $\mathscr{H}^p$ for any $p>2.$
\end{corollary}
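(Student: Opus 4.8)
The plan is to deduce Corollary~\ref{cor:dual_non_embedding2} directly from Corollary~\ref{cor:dual_non_embedding} by transporting the non-embedding statement across the Bohr lift $\mathcal{B}$. The essential observation is that $\mathcal{B}$ is, by construction, an isometric isomorphism between $\mathscr{H}^p$ and a closed subspace of $H^p(\T^\infty)$; indeed, the defining relation $\|f\|_{\mathscr{H}^p}=\|\mathcal{B}f\|_{H^p(\T^\infty)}$ is precisely the definition of $\mathscr{H}^p$, and under $\kappa$ the image of the monomials $n^{-s}$ is exactly the set of analytic monomials $z^{\kappa(n)}$ with $\kappa(n)\in\Z_+^\infty$. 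Thus $\mathcal{B}$ identifies $\mathscr{H}^p$ with the analytic Hardy space $H^p(\T^\infty)$ itself (the two spaces of analytic Dirichlet series and of analytic functions on the polytorus coincide after the substitution $z_j=p_j^{-s}$), so the translation of the two statements is, at the level of the relevant Banach spaces, essentially verbatim.

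Concretely, first I would recall that the Bohr correspondence \eqref{eq:bohr} sets up an isometric isomorphism $\mathcal{B}\colon \mathscr{H}^1\to H^1(\T^\infty)$; taking adjoints, this gives an isometric isomorphism of the dual spaces $(\mathscr{H}^1)^*\cong H^1(\T^\infty)^*$. Second, the same correspondence gives $\mathscr{H}^p\cong H^p(\T^\infty)$ isometrically for every $p>2$. These two identifications are compatible with the natural inclusions: the inclusion $(\mathscr{H}^1)^*\hookrightarrow\mathscr{H}^p$, when read through $\mathcal{B}$, becomes exactly the inclusion $H^1(\T^\infty)^*\hookrightarrow H^p(\T^\infty)$, because the pairing between $\mathscr{H}^1$ and its dual is carried to the pairing between $H^1(\T^\infty)$ and its dual, and a dual element that happens to lie in $\mathscr{H}^p$ corresponds to one lying in $H^p(\T^\infty)$. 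Therefore an embedding of the one pair would force an embedding of the other pair.

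Finally I would conclude by contraposition: Corollary~\ref{cor:dual_non_embedding} states that $H^1(\T^\infty)^*$ is not contained in $H^p(\T^\infty)$ for any $p>2$, so the transported statement says $(\mathscr{H}^1)^*$ is not contained in $\mathscr{H}^p$ for any $p>2$, which is exactly the assertion. The only point requiring a word of care is the precise meaning of the inclusion symbol: one must check that ``$(\mathscr{H}^1)^*\subset\mathscr{H}^p$'' is interpreted via the canonical identification of a dual functional on $\mathscr{H}^1$ with its $\mathscr{H}^2$-analytic symbol (or equivalently with an element of $P^+_\infty(L^\infty)$ in the polytorus picture), and that this identification is exactly the one preserved by $\mathcal{B}$. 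Given the remark in the excerpt that the analytic dual of $H^1$ equals $P^+_\infty(L^\infty(\T^\infty))$, this compatibility is immediate, and the corollary follows with essentially no further computation. I expect no substantive obstacle here; the entire content is the functoriality of the isometry $\mathcal{B}$ under passage to duals, and the main work was already carried out in proving Theorem~\ref{n_to_infty} and its corollaries on $\T^\infty$.
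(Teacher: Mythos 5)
Your proposal is correct and follows exactly the route the paper intends: the paper states that Corollary~\ref{cor:dual_non_embedding} ``has an immediate translation in terms of Hardy spaces of Dirichlet series'' via the Bohr lift, which is precisely the isometric-identification argument you spell out. No gap; you have simply made explicit what the authors leave as a one-line remark.
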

We will now use the notation $\mathbb{C}_{\theta}:=\{s=\sigma+it: \sigma>\theta\}$. The conformally invariant Hardy space $H_{\operatorname{i}}^p(\mathbb{C}_\theta)$ consists of functions $f$ that are analytic on $\mathbb{C}_\theta$ and satisfy 
\[\|f\|_{H_{\operatorname{i}}^p(\mathbb{C}_\theta)} := \sup_{\sigma>\theta} \left(\frac{1}{\pi}\int_{\mathbb{R}} |f(\sigma+it)|^p\,\frac{dt}{1+t^2}\right)^\frac{1}{p} <\infty.\]
These spaces show up naturally in our discussion in the following two ways. First, we will repeatedly use that a function $g$ analytic on $\mathbb{C}_0$ is in $\BMOA$ if and only if the measure
\[ d\mu(s):=|g'(\sigma+it)|^2 \sigma d\sigma  \frac{ dt}{1+t^2} \] 
is a Carleson measure for $H_{\operatorname{i}}^1(\mathbb{C}_0)$, which means that there is a constant $C$ such 
\[ \int_{\mathbb{C}_0} |f(s)| d\mu(s) \le C \| f \|_{H_{\operatorname{i}}^1(\mathbb{C}_0)} \]
for all $f$ in $H_{\operatorname{i}}^1(\mathbb{C}_0)$. The smallest such constant $C$ is called the Carleson norm of the measure.
Second, by Fubini's theorem, we have the following connection between $\mathcal{H}^p$ and $H_{{\operatorname{i}}}^p(\mathbb{C}_0)$:
\begin{equation}
	\label{eq:avgrotemb} \big\|f\big\|_{\mathcal{H}^p}^p = \int_{\mathbb{T}^\infty} \|f_\chi\|^p_{H^p_{\operatorname{i}}(\mathbb{C}_0)} \, dm_\infty(\chi),
\end{equation}
where $\chi$ is a character on $\mathbb{Q}^+$, i.e., a completely multiplicative function taking only unimodular values, and
\[ f_{\chi}(s):=\sum_{n=1}^{\infty} \chi(n) a_n n^{-s}. \]  Here we recall that an arithmetic function $g:\N\to\C$ is completely multiplicative if it satisfies $g(nm)$ $=g(n)g(m)$ for all integers $m,n\geq 1$. A completely multiplicative function $g$ satisfies $g(1)=1$ unless $g$ vanishes identically, and it is completely determined by its values at the primes. 

Note that we identify via the Bohr lift $\alpha\mapsto p^{\alpha}$ the  group $\Z^{(\infty)}$ with the group $Q^+$, and by duality  the group $\T^\infty$ with the group of completely multiplicative functions $\chi:\N\to \T$. Accordingly, we identify the Haar measures $dm_{\infty}(z)$ and $dm_{\infty}(\chi)$ of both groups. We also used in \eqref{eq:avgrotemb}  the fact that, for almost every character $\chi$  and $\sum_{n=1}^\infty a_n n^{-s}$ in $\mathcal{H}^p$, the series $\sum_{n=1}^\infty a_n \chi(n) n^{-s}$ converges $m_{\infty}$-almost everywhere in $\C_0$, and defines an element in $H_{{\operatorname{i}}}^p(\mathbb{C}_0)$. For these facts, we refer e.g. to \cite[Section 4.2]{HLS} and \cite[Thm 5]{Ba}.

From \eqref{eq:avgrotemb} we may deduce Littlewood--Paley type expressions for the norms of $\mathcal{H}^p$. This was first done for $p=2$ in \cite[Prop.~4]{Ba}, and later for $0<p < \infty$ in \cite[Thm.~5.1]{BQS}, where the formula
\begin{equation}
	\label{eq:LPp} \|f\|_{\mathcal{H}^p}^p \asymp |f(+\infty)|^p + \frac{4}{\pi}\int_{\mathbb{T}^\infty} \int_{\mathbb{R}}\int_0^\infty |f_\chi(\sigma+it)|^{p-2}|f_\chi'(\sigma+it)|^2 \sigma d\sigma\frac{dt}{1+t^2}dm_\infty(\chi)
\end{equation}
was obtained. When $p=2$, we have equality between the two sides of \eqref{eq:LPp}.

The Littlewood--Paley formula \eqref{eq:LPp}  for $p=2$ may be polarized, so that we have
	\[
		\langle f, g \rangle_{\mathcal{H}^2} = f(+\infty)\overline{g(+\infty)} + \frac{4}{\pi} \int_{\mathbb{T}^\infty} \int_\mathbb{R} \int_0^\infty f_\chi'(\sigma+it) \overline{g_\chi'(\sigma+it)} \sigma\,d\sigma\,\frac{dt}{1+t^2}\,dm_\infty(\chi). \label{eq:LPpolar} 
	\]
Hence, by the Cauchy--Schwarz inequality and \eqref{eq:LPp}, we have for $f$ in $\mathcal{H}^1$ and $g$ in $\BMOA\cap \mathcal{D}$,
\begin{align*}
\big|\langle f, g \rangle_{\mathcal{H}^2} - f(+\infty)\overline{g(+\infty)}\big|^2 & \le \frac{4}{\pi}\int_{\mathbb{T}^\infty} \int_\mathbb{R} \int_0^\infty |f_\chi(\sigma+it)|^{-1} |f_\chi'(\sigma+it)|^2 \sigma\,d\sigma\,\frac{dt}{1+t^2}\,dm_\infty(\chi) \\
& \times 
\int_{\mathbb{T}^\infty} \int_\mathbb{R} \int_0^\infty |f_\chi(\sigma+it)| |g_\chi'(\sigma+it)|^2 \sigma\,d\sigma\,\frac{dt}{1+t^2}\,dm_\infty(\chi) \\
& \ll \| f \|_{\mathcal{H}^1} \int_{\T^{\infty}}\|f_\chi\|_{H^1_{\operatorname{i}}(\mathbb{C}_0)} \, dm_\infty(\chi) = \| f \|_{\mathcal{H}^1}^2,
\end{align*}
where we in the second step used the Littlewood--Paley formula for $p=1$ and that 
\[ |g'_{\chi}(\sigma+it)|^2 \sigma d\sigma  \frac{ dt}{1+t^2} \] 
is a Carleson measure for $H_{\operatorname{i}}^1(\mathbb{C}_0)$, with Carleson constant uniformly bounded in $\chi$, as follows from \cite[Lem. 2.1 (ii) and Lem. 2.2]{BPS}.
Hence we conclude that a Dirichlet series $g$ in $\BMOA\cap \mathcal{D}$ belongs to
$(\mathcal{H}^1)^*$.

The ``reverse'' problem\label{brevig} of finding an embedding of $(\mathcal{H}^1)^*$ into a ``natural'' space of functions analytic in $\mathbb{C}_{1/2}$ appears challenging. (This is a  reverse question only in a rather loose sense as we are now considering functions defined in $\mathbb{C}_{1/2}$.) It was mentioned in \cite[Quest. 4]{SaSe} that $(\mathcal{H}^1)^*$ is not contained in $H_{\operatorname{i}}^q(\mathbb{C}_{1/2})$
for any $q>4$. Since no argument for this assertion was given in \cite{SaSe}, we take this opportunity to offer a proof\footnote{We thank Ole Fredrik Brevig for showing us this argument and allowing us to include it in this paper.}. To begin with, let us consider the interval from $1/2-i$ to $1/2+i$ and let $E$ denote the corresponding local embedding of $\mathscr{H}^2$ into $L^2(-1,1)$, given by $Ef(t) := f(1/2+it)$, so that
\[\|E f\|_{L^2(-1,1)}^2 = \int_{-1}^1 |f(1/2+it)|^2 \,dt.\]
Then the adjoint $E^\ast \colon L^2(-1,1) \to \mathscr{H}^2$ is 
\[E^{\ast}g(s) := \sum_{n=1}^\infty \frac{\widehat{g}(\log{n})}{\sqrt{n}} n^{-s},\]
where $\widehat{g}(\xi) = \int_{-1}^1 e^{-i \xi t} g(t)\,dt$.
Fix $0<\beta<1$ and set $g_\beta(t): = |t|^{\beta-1}$. Plainly, $g_\beta$ is in $L^q(-1,1)$ if and only if $\beta>1-1/q$. Moreover, if $\xi\geq \delta>0$, then  
$\widehat g_\beta(\xi) \asymp \xi^{-\beta}$, 
where the implied constants depend only on $\delta$ and $\beta$. We now invoke Helson's inequality \cite[p. 89]{He2}
\[ \Big\| \sum_{n=1}^{\infty} a_n n^{-s} \Big\|_1 \ge \left(\sum_{n=1}^{\infty} \frac{|a_n|^2}{d(n)} \right)^{1/2}, \]
where $d(n)$ is the divisor function. We then use the classical fact that $\sum_{n\le x} 1/d(n)$ is of size $x (\log x)^{-1/2}$; the precise asymptotics of this summatory function was first computed by Wilson \cite[Formula (3.10)]{W} and may now be obtained as a simple consequence of a general formula of Selberg \cite{Se}. Taking $\beta=1/4$, we may therefore infer by partial summation that $E^\ast$ is unbounded from $L^q(-1,1)$ to $\mathscr{H}^1$ whenever $q < 4/3$.
By duality we conclude that for any $q>4$, there are $\varphi$ in $(\mathscr{H}^1)^\ast$ that are not locally embedded in $L^q(-1,1)$  and hence do not belong to $H_{\operatorname{i}}^q(\mathbb{C}_{1/2})$. Note that here  $(\mathscr{H}^1)^\ast$ is identified as a subspace of $H^2$ (with respect to the natural pairings of $L^2(-1,1)$ and $L^2(\mathbb{T}^\infty)$) ,whence $E^{**}g=Eg$ for $(\mathscr{H}^1)^\ast$.
In view of  Corollary~\ref{cor:dual_non_embedding}, it is natural to ask if the situation is even worse, namely that $(\mathcal{H}^1)^*$ fails to be contained in $H_{\operatorname{i}}^q(\mathbb{C}_{1/2})$ for any $q>2$. 

We conclude from the preceding argument that there is no simple relation between $(\mathcal{H}^1)^*$ and $\BMOA(\mathbb{C}_{1/2})$. We may further illustrate this point by the following example. The Dirichlet series
\[ h(s):=\sum_{n=2}^{\infty} \frac{1}{\log n} n^{-s-1/2} \]
belongs to $\BMOA(\mathbb{C}_{1/2})$ (see \eqref{eq:hilbert}) below), but it is unknown whether it is in $(\mathcal{H}^1)^*$. It would be interesting to settle this question about membership in $(\mathcal{H}^1)^*$, as $h$ is both a primitive of $\zeta(s+1/2)-1$ and the analytic symbol of the multiplicative Hilbert matrix \cite{BPSSV}. 
\subsection{Fefferman's condition for membership in $\BMOA\cap \mathcal{D}$}
The following theorem gives interesting information about Dirichlet series in $\BMOA$. It is an immediate consequence of existing results, as will be explained in the subsequent discussion.
\begin{theorem}  \label{thm:fefferman}
\begin{itemize}
\item[(i)]
Suppose that $a_n\ge 0$ for every $n\ge 1$. Then $f(s):=\sum_{n=1}^{\infty} a_n n^{-s}$ is in $\operatorname{BMOA}$  if and only if
\begin{equation} \label{eq:feff} S^2:=\sup_{x\ge e } \sum_{k=1}^{\infty} \Big(\sum_{x^k\le n <x^{k+1}} a_{n} \Big)^2 < \infty,  \end{equation}
and we have $S\asymp \Vert f\Vert_{\BMOA}$.
\item[(ii)]
If \ $\sum_{n=1}^{\infty} |a_n| n^{-s}$ is in $\operatorname{BMOA}$, then $\sum_{n=1}^{\infty} a_n n^{-s}$ is in $\operatorname{BMOA}$.
\end{itemize}
\end{theorem}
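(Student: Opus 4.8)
The plan is to reduce both parts to the Carleson measure characterization of $\BMOA$ recalled above, namely that $f\in\BMOA$ if and only if $d\mu(s)=|f'(\sigma+it)|^2\,\sigma\,d\sigma\,\frac{dt}{1+t^2}$ is a Carleson measure for $H_{\operatorname{i}}^1(\C_0)$, and then to compute the Carleson norm of $\mu$ explicitly when the coefficients are nonnegative. Writing $f'(\sigma+it)=-\sum_n a_n(\log n)\,n^{-\sigma}e^{-it\log n}$, I would first integrate $|f'|^2$ in $t$ over a Carleson box $Q$ of side $h$ centred at a boundary point $it_0$. By orthogonality of the exponentials (Parseval for the almost periodic function $f(it)$), this $t$-integration collapses the double sum to its near-diagonal part: the frequencies $\log n,\log m$ interact only when $|\log n-\log m|\lesssim 1/h$, the off-diagonal contributions being controlled by the decay of $\sinc(h\log(n/m))$. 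Choosing the box side by the rule $h\asymp 1/\log x$ matches this frequency resolution $1/h$ to the block width $\log x$ appearing in \eqref{eq:feff}, so that each geometric block $x^k\le n<x^{k+1}$ contributes a single near-diagonal term.

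The second step exploits nonnegativity. Since all $a_n\ge 0$, the modulus $|f'(\sigma+it)|$ is largest at $t=0$, where every summand aligns; moreover translating the box to $t_0$ amounts to replacing $a_n$ by the modulated coefficients $a_n e^{-it_0\log n}$, whose block sums can only shrink in modulus. Consequently the supremum defining the Carleson norm is attained, up to constants, on boxes centred at the origin, which is precisely why no translation parameter survives in \eqref{eq:feff}. On such a box the surviving near-diagonal sum, after carrying out the weighted integral $\int_0^h\sigma\,n^{-2\sigma}\,d\sigma$ block by block, is comparable to $\big(\sum_{x^k\le n<x^{k+1}}a_n\big)^2$ uniformly in $k$; summing over $k$ and taking the supremum over $h$ (equivalently over $x\ge e$) yields the two-sided bound $\|\mu\|_{\mathrm{Carleson}}\asymp S^2$, and hence $S\asymp\|f\|_{\BMOA}$. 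This proves part (i).

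For part (ii) I would observe that only the upper bound from the above analysis is needed, and that this bound never uses the sign of the coefficients: over each block the elementary inequality $\big|\sum_{n} a_n(\log n)n^{-\sigma}e^{-it\log n}\big|\le\sum_n|a_n|(\log n)n^{-\sigma}$ replaces $\sum_{x^k\le n<x^{k+1}}a_n$ by $\sum_{x^k\le n<x^{k+1}}|a_n|$ in every block contribution. Thus the Carleson norm of $\mu$ attached to $\sum_n a_n n^{-s}$ is dominated by the quantity $S^2$ formed from the absolute values $|a_n|$, which by part (i) applied to the nonnegative sequence $(|a_n|)$ is finite precisely when $\sum_n|a_n|n^{-s}\in\BMOA$. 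Hence $\sum_n a_n n^{-s}\in\BMOA$, as claimed.

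The main obstacle I anticipate lies in making the scale-matching rigorous: the frequency separation between adjacent blocks is only of the same order $1/h$ as the resolution of the box, so adjacent blocks are not exactly orthogonal and the individual $\sinc$ cross-terms are not negligible. Controlling these cross-block interactions, together with the contribution of the conformal weight $1/(1+t^2)$ on large boxes, where the small-scale part of the condition $x\ge e$ no longer applies and one must instead fall back on the global integrability \eqref{eq:int1} near the point at infinity, is the delicate point. It is here that one should appeal to the quantitative Carleson-measure estimates underlying the characterization rather than to the heuristic Parseval computation alone, which is why the statement is presented as an immediate consequence of existing results.
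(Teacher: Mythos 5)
Your overall strategy (reduce both parts to the Carleson-measure characterization and compute the Carleson norm blockwise at the matched scale $h\asymp 1/\log x$) is a legitimate alternative route --- the paper itself remarks that Lemma~\ref{basaux} could be used to give an alternative proof of part (i) --- but as written the argument has two genuine gaps, both located exactly where you flag ``the delicate point,'' and neither is resolved by the closing appeal to ``the quantitative Carleson-measure estimates underlying the characterization.'' First, the cross-block interactions do not disappear under the naive $\sinc$ estimate. Writing $A_k$ for the $k$th block sum, the $t$-integral of a cross term between blocks at distance $d$ is only $O(h/d)$, and after the $\sigma$-integration the total distance-$d$ contribution is still of order $(h/d)\sum_k A_kA_{k+d}\le (h/d)S^2$ by Cauchy--Schwarz; summing over $d$ produces the divergent $\sum_d 1/d$. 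So the reduction of the Carleson norm to the diagonal quantity $\sum_k A_k^2$ is precisely the nontrivial content of the theorem, and it is what the paper outsources: part (i) is deduced from the Sledd--Stegenga $H^1$-multiplier theorem via Fefferman duality and Parseval, with an $H^2$-factorization trick to reduce the necessity direction to functions whose Fourier transform is nonnegative; it is not obtained from a direct box computation.

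Second, your majorant steps are pointwise where they must be integral. The inequality $\bigl|\sum a_n(\log n)n^{-\sigma}e^{-it\log n}\bigr|\le\sum|a_n|(\log n)n^{-\sigma}$ bounds $|f'(\sigma+it)|$ by the value of the majorant series at the single point $t=0$; feeding this into the box integral gives $\int_0^h h\,|\tilde f'(\sigma)|^2\sigma\,d\sigma$, which already diverges when $|\tilde f'(\sigma)|\asymp 1/\sigma$ --- behaviour exhibited by the $\BMOA$ function \eqref{eq:hilbert}. Likewise, the claim that recentering the box at $t_0$ ``can only shrink'' the block sums is not a consequence of positivity. What is actually needed is an integral majorant principle: if $|a_n|\le b_n$ then $\int_{T_1-T}^{T_1+T}\bigl|\sum a_ne^{i\lambda_nt}\bigr|^2dt\le 3\int_{-T}^{T}\bigl|\sum b_ne^{i\lambda_nt}\bigr|^2dt$. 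This is Montgomery's inequality for almost periodic functions, and combined with Lemma~\ref{basaux} it constitutes the paper's entire proof of part (ii); it would also be the tool that legitimizes your recentering step in part (i). Without it, both the ``boxes centred at the origin suffice'' claim and the deduction of part (ii) are unsupported.
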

It is immediate from (i) that 
\begin{equation} \label{eq:hilbert} \sum_{n=2}^{\infty} \frac{1}{\log n} n^{-s-1} \end{equation}
is in $\BMOA$ (see \cite[Thm. 2.5]{BPS}). By Mertens's formula 
\begin{equation} \label{eq:mertens} \sum_{p\le x} \frac{1}{p} = \log\log x + M +O\left((\log x)^{-1}\right), \end{equation}
where the sum is over the primes $p$, part (i) also implies that 
$ \sum_p p^{-1-s} $
is in $\BMOA$, and consequently $\log \zeta(s+1)$ is a function in $\BMOA$, where $\zeta(s)$ is now the Riemann zeta function. Then  part (ii) of Theorem \ref{thm:fefferman}  implies also that
$\sum_p \chi(p) p^{-1-s}$ is in $\BMOA$ for any sequence of unimodular numbers  $\chi(p)$. In fact, we have more generally:

\begin{corollary}\label{cor:prime}
A Dirichlet series $\sum_{p} a_p p^{-s}$ over the primes $p$ is in $\operatorname{BMOA}$ if and only if
\begin{equation} \label{eq:bmo}  \sup_{x\ge e} \sum_{k=1}^{\infty} \Big(\sum_{x^k\le p < x^{k+1}} |a_{p}| \Big)^2 < \infty.\end{equation}
\end{corollary}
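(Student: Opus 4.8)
The plan is to derive both implications from Theorem~\ref{thm:fefferman} by reducing the complex-coefficient case to the nonnegative one. Writing $b_n:=|a_n|$ when $n$ is prime and $b_n:=0$ otherwise, condition~\eqref{eq:bmo} is exactly condition~\eqref{eq:feff} for the nonnegative sequence $(b_n)$; by part~(i) of Theorem~\ref{thm:fefferman} it is therefore equivalent to $\sum_p |a_p|\,p^{-s}\in\BMOA$. Hence the whole corollary is equivalent to the assertion that, for a prime-supported Dirichlet series, $\sum_p a_p p^{-s}\in\BMOA$ if and only if $\sum_p|a_p|\,p^{-s}\in\BMOA$. The ``if'' part is then immediate from part~(ii): taking $a_n:=a_p$ for $n=p$ prime and $a_n:=0$ otherwise, the hypothesis $\sum_n|a_n|n^{-s}=\sum_p|a_p|p^{-s}\in\BMOA$ forces $\sum_p a_p p^{-s}\in\BMOA$.

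The ``only if'' part is the crux, and here I would exploit invariance of the $\BMOA$ norm under character twists together with the special structure of prime-supported series. For a completely multiplicative unimodular $\chi$ set $f_\chi(s):=\sum_p \chi(p)a_p p^{-s}$. The key claim is that $\|f_\chi\|_{\BMOA}\le\|f\|_{\BMOA}$ for every such $\chi$. First, the vertical-translation characters $\chi_\tau(n):=n^{-i\tau}$ satisfy $f_{\chi_\tau}(s)=f(s+i\tau)$, so $\|f_{\chi_\tau}\|_{\BMOA}=\|f\|_{\BMOA}$ by translation invariance of the $\BMO$ norm on $\RR$. Since the numbers $\{\log p\}$ are linearly independent over $\mathbb{Q}$, Kronecker's theorem shows that $\{\chi_\tau:\tau\in\RR\}$ is dense in $\T^\infty$, so any prescribed $\chi$ arises as a limit of such $\chi_\tau$; along this limit $f_{\chi_\tau}\to f_\chi$ locally uniformly in $\C_0$ (the vertical translates form a normal family, being locally uniformly bounded, and the limit is identified coefficientwise), and the same holds for the derivatives. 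Using the Carleson-measure description of $\BMOA$ recalled above, the measures $|f_{\chi_\tau}'|^2\sigma\,d\sigma\,dt/(1+t^2)$ then have Carleson norms uniformly bounded by a multiple of $\|f\|_{\BMOA}^2$, and by lower semicontinuity of the Carleson norm along this convergence the bound passes to the limit measure, giving $\|f_\chi\|_{\BMOA}\le\|f\|_{\BMOA}$.

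Granting the claim, I would conclude by choosing $\chi(p):=\overline{a_p}/|a_p|$ whenever $a_p\neq0$ (and $\chi(p):=1$ otherwise), extended completely multiplicatively. Then $\chi(p)a_p=|a_p|$, so $f_\chi(s)=\sum_p|a_p|p^{-s}$ lies in $\BMOA$, and part~(i) of Theorem~\ref{thm:fefferman} applied to the nonnegative sequence $(|a_p|)$ yields~\eqref{eq:bmo}.

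The main obstacle is making the limiting step fully rigorous: one must verify that the vertical translates of $f$ form a normal family on $\C_0$ (so that the local uniform limit $f_\chi$ exists and is correctly identified), and that the Carleson norm of the defining measure is lower semicontinuous along $f_{\chi_\tau}\to f_\chi$, with the Carleson bound uniform in the approximating parameter $\tau$. Once this invariance of $\BMOA$ under character twists is established, the corollary follows cleanly by combining it with parts~(i) and~(ii) of Theorem~\ref{thm:fefferman}.
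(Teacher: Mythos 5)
Your overall reduction is the same as the paper's: both implications follow from Theorem~\ref{thm:fefferman}(i) once one knows that, for prime-supported Dirichlet series, membership in $\BMOA$ is invariant under unimodular twists $a_p\mapsto\chi(p)a_p$. The paper disposes of this invariance in one line by citing \cite[Lem. 2.1]{BPS}; you instead sketch a direct proof via Kronecker's theorem, vertical translations, normal families, and lower semicontinuity of the Carleson box constant. That is indeed the standard mechanism behind such invariance statements, and the individual ingredients you invoke are sound here: the vertical translates are locally uniformly bounded on $\C_0$ because $\BMOA\cap\mathcal{D}\subset\mathfrak{B}\cap\mathcal{D}$ and \eqref{eq:point} applies, the limit is identified coefficientwise since $\sigma_u(f)\le 0$, and the box constant in \eqref{eq:carl} passes to the limit by exhausting each box by $[\varepsilon,h]\times[t,t+h]$ and using monotone convergence. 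Your use of part (ii) for the easy direction is a small simplification over the paper, which runs the twist invariance in both directions.

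The one substantive gap is that Lemma~\ref{basaux} characterizes $\BMOA\cap\mathcal{D}$ by the box condition only under the standing hypothesis that the function already lies in $H_{\operatorname{i}}^{2}(\C_0)\cap\mathcal{D}$, i.e., satisfies \eqref{eq:int1}. Your limiting argument transfers the box constant to $f_\chi=\sum_p|a_p|p^{-s}$, but it does not show that $f_\chi$ satisfies \eqref{eq:int1}: the $H_{\operatorname{i}}^{2}(\C_0)$ bounds of the translates $f(\cdot+i\tau)$ degrade as $|\tau|\to\infty$ (the weight $1/(1+\sigma^2+t^2)$ is not translation invariant), so they do not pass to the limit, and without \eqref{eq:int1} the box condition alone does not certify membership in $\BMOA$ as defined in the paper. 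This is precisely the part of the argument that \cite[Lem. 2.1]{BPS} packages for you, and it needs a separate verification in your self-contained version. With that point supplied, your proof is complete and essentially reproves the cited lemma rather than quoting it.
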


\noindent Corollary~\ref{cor:prime} is a consequence of part (i) of Theorem~\ref{thm:fefferman} and the fact (see \cite[Lem. 2.1]{BPS}) that $\sum_p a_p p^{-s}$ is in $\BMOA$ if and only if $\sum_p a_p \chi(p) p^{-s}$ is in $\BMOA$ for every sequence of unimodular numbers  $\chi(p)$.  

The sufficiency of condition \eqref{eq:feff} in Theorem~\ref{thm:fefferman})(i) follows as a corollary to an $H^1$ multiplier theorem of Sledd and Stegenga \cite[Thm. 1]{SS} via Fefferman's duality theorem \cite{F, FS} and Parseval's theorem. The necessity also follows from \cite[Thm. 1]{SS} if we first note that  for any $f$ in $H^1(\C_0)$, using the standard $H^2$ factorization of $H^1$, we may construct $g$ in $H^1(\C_0)$ with $\| g\|_{H^1(\C_0)}=\| f\|_{H^1(\C_0)}$  and   $\widehat g(\xi)\geq  |\widehat f(\xi)|\geq 0$ for all $\xi\in\RR$. Here $\widehat f,\widehat g$ refer to the Fourier transforms of the boundary values on the imaginary axis. A corresponding result for $\BMO$ in the unit disc is stated  in \cite[Cor. 2]{SS}: The Taylor series $\sum_{m=0}^{\infty} c_m z^m$ with $c_m\ge 0$ belongs to $\BMO$ of the unit circle $\mathbb{T}$ if and only if  
\[ \sup_{m\ge 1} \sum_{j=0}^{\infty} \left(\sum_{r=0}^{m-1} c_{mj+r}\right)^2 < \infty. \]
Other proofs of this result, relying more directly on Hankel operators, can be found in \cite{Bon, HW}. This result is commonly known to have appeared in unpublished work of Fefferman.

To establish part (ii) of Theorem~\ref{thm:fefferman}, we use the following Carleson measure characterization of $\BMOA\cap \mathcal{D}$ which could be used to give an alternative proof of part (i) of Theorem~\ref{thm:fefferman}.

\begin{lemma}\label{basaux} Suppose that $f$ is in $H_{\operatorname{i}}^{2}(\C_0)\cap \mathcal{D}$. Then $f$ is in $\BMOA\cap\mathcal{D}$  if and only if there exists a positive constant $C$ such
\begin{equation} \label{eq:carl}  \sup_{t\in \mathbb{R}} \int_{0}^h \int_{t}^{t+h} |f'(\sigma+i \tau)|^2 \sigma d\tau d\sigma \le C h \end{equation}
for $0\le h \le 1$.
Moreover, the best constant $C$ in ( \ref{eq:carl}) and $\Vert f\Vert_{\BMO}^{2}$ are equivalent.
\end{lemma}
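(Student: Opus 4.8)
The plan is to deduce the lemma from the classical description of $\BMOA(\mathbb{C}_0)$ by Carleson measures, namely that for $f$ analytic on $\mathbb{C}_0$ admitting boundary values one has
\[
\|f\|_{\BMO}^2 \asymp \sup_{I\subset\mathbb{R}} \frac{1}{|I|}\int_{Q(I)} |f'(\sigma+i\tau)|^2\,\sigma\,d\sigma\,d\tau ,
\]
where $Q(I):=\{\sigma+i\tau:\ \tau\in I,\ 0<\sigma<|I|\}$ is the Carleson box over $I$; this is exactly the $H_{\operatorname{i}}^{1}(\mathbb{C}_0)$‑Carleson condition recalled earlier in this section, and the hypothesis $f\in H_{\operatorname{i}}^{2}(\mathbb{C}_0)$ guarantees the boundary values and the finiteness of the integrand (cf. \cite{BPS}). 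Granting this, the only content of the lemma is that, for a Dirichlet series, the supremum over \emph{all} intervals $I$ may be replaced by the supremum over intervals of length $h\le 1$ occurring in \eqref{eq:carl}. The necessity of \eqref{eq:carl} is then immediate: for $I=(t,t+h)$ with $h\le 1$ the left‑hand side of \eqref{eq:carl} is precisely $\int_{Q(I)}|f'(\sigma+i\tau)|^2\sigma\,d\sigma\,d\tau$, so the displayed supremum bounds it and the best constant $C$ in \eqref{eq:carl} satisfies $C\ll\|f\|_{\BMO}^2$.

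For the sufficiency I would assume \eqref{eq:carl} and prove the displayed supremum is $\ll C$. Only intervals $I=(t_0,t_0+h)$ with $h>1$ require an argument, and for these I split $Q(I)=Q_{\mathrm{low}}\cup Q_{\mathrm{high}}$ with $Q_{\mathrm{low}}:=(0,1)\times(t_0,t_0+h)$ and $Q_{\mathrm{high}}:=(1,h)\times(t_0,t_0+h)$. The part $Q_{\mathrm{low}}$ is easy: cover $(t_0,t_0+h)$ by at most $\lceil h\rceil\le 2h$ unit intervals and apply \eqref{eq:carl} with $h=1$ to each, so that $\int_{Q_{\mathrm{low}}}|f'|^2\sigma\,d\sigma\,d\tau\ll C\,h=C\,|I|$. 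Everything therefore reduces to the deep‑interior estimate $\int_{Q_{\mathrm{high}}}|f'|^2\sigma\,d\sigma\,d\tau\ll C\,h$, equivalently $\int_{t_0}^{t_0+h}\!\int_{1}^{\infty}|f'(\sigma+i\tau)|^2\sigma\,d\sigma\,d\tau\ll C\,h$.

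The main obstacle is exactly this high part, and it is here that the Dirichlet series structure is indispensable. The key feature is the spectral gap: since $f'(\sigma+i\tau)=-\sum_{n\ge 2}a_n(\log n)\,n^{-\sigma}n^{-i\tau}$ has all its vertical frequencies $\le -\log 2$, the vertical $L^2$‑means of $|f'|^2$ decay geometrically in $\sigma$, because $\sum_{n\ge 2}|a_n|^2(\log n)^2 n^{-2\sigma}\le 4^{-(\sigma-\sigma_1)}\sum_{n\ge 2}|a_n|^2(\log n)^2 n^{-2\sigma_1}$ for $\sigma\ge\sigma_1$. Using a mean–value theorem for Dirichlet series on the window $(t_0,t_0+h)$ together with this decay, the interior integral over $\sigma>1$ is controlled by the near‑boundary slab $\sigma\in(1/2,1)$, which in turn is covered by unit boxes and hence bounded via \eqref{eq:carl}; the delicate point, which I expect to be the crux, is obtaining this bound \emph{uniformly in the base point $t_0$}, and this uniformity is precisely the vertical (almost‑)translation invariance of these Dirichlet‑series means. (Alternatively, one may pass to the disc by a Cayley transform, where the $H_{\operatorname{i}}^{1}(\mathbb{C}_0)$‑Carleson condition becomes the classical box condition for $H^1(\mathbb{D})$; the finite boxes correspond to \eqref{eq:carl}, while the single box at the image of $\infty$ corresponds to the region $\sigma\to\infty$ and is automatic from the decay above.) Combining the three estimates gives $\sup_I\frac1{|I|}\int_{Q(I)}|f'|^2\sigma\,d\sigma\,d\tau\ll C$, hence $f\in\BMOA\cap\mathcal{D}$ with $\|f\|_{\BMO}^2\ll C$; together with the necessity bound this yields the asserted equivalence of $\|f\|_{\BMO}^2$ and the best constant in \eqref{eq:carl}.
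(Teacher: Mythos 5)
Your framework is the right one and matches the paper's: reduce to the classical Carleson-box characterization of $\BMOA(\C_0)$, note that only boxes of side $h>1$ need an argument, handle the slab $0<\sigma<1$ by covering the base interval with unit intervals and applying \eqref{eq:carl} with $h=1$, and isolate the region $\sigma>1$ as the remaining issue. The necessity direction and the equivalence of constants are fine as you state them.

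The gap is exactly where you flag it: the treatment of the region $\sigma>1$ uniformly in the base point $t_0$. The mechanism you propose does not close it. The geometric decay of $\sum_{n\ge 2}|a_n|^2(\log n)^2n^{-2\sigma}$ controls the \emph{full-line} vertical mean of $|f'(\sigma+i\cdot)|^2$, but a windowed integral $\int_{t_0}^{t_0+h}|f'(\sigma+i\tau)|^2\,d\tau$ over a fixed window is not dominated by $h$ times that mean for a general Dirichlet series; mean-value theorems of Montgomery--Vaughan type carry error terms depending on the frequencies and give nothing uniform in $t_0$ for fixed $h$. (The Cayley-transform variant has the same problem: the box at the image of $\infty$ mixes large $\sigma$ with large $|t|$ and is not ``automatic'' from the $L^2$-mean decay.) The paper's resolution is simpler and pointwise, and you should replace your $L^2$-mean argument by it: from \eqref{eq:carl} with $h=1$ and the sub-mean-value property of $|f'|^2$ on discs of fixed radius contained in $\{1/4<\sigma<1\}$, one gets $|f'(\sigma+it)|\ll\sqrt{C}$ uniformly in $t$ for, say, $\sigma$ near $3/4$; then the maximum modulus principle applied to the general Dirichlet series $2^{s}f'(s)$, whose frequencies $\log(n/2)$, $n\ge 2$, are nonnegative, upgrades this to $|f'(\sigma+it)|\ll\sqrt{C}\,2^{-\sigma}$ uniformly in $t$ for $\sigma\ge 1$. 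With this pointwise bound the deep-interior integral is at most $\int_{t_0}^{t_0+h}\int_1^\infty C\,\sigma\,4^{-\sigma}\,d\sigma\,d\tau\ll Ch$, which is the estimate you need; this is where the Dirichlet-series hypothesis enters, not through the decay of full-line $L^2$ means.
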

\begin{proof} We first observe that \eqref{eq:carl} and the assumption that $f$ is in $H_{i}^{2}(\C_0)$ imply, by the maximum modulus principle, that $f'(\sigma+it)$ is uniformly bounded by $O(\sqrt{C})$ for $\sigma\geq 1$. Then, if $h>1$ and $t\in \RR$ are given and 
\begin{align*} I & :=\int_{0}^h \int_{t}^{t+h} |f'(\sigma+i\tau)|^2 \sigma d\tau d\sigma \\ &\ =\int_{0}^{1}\Big[\int_{t}^{t+h}|f'(\sigma+i\tau)|^2 d\tau\Big]\sigma d\sigma+ \int_{1}^{h}\Big[\int_{t}^{t+h}|f'(\sigma+i\tau)|^2 d\tau\Big]\sigma d\sigma=:I_1+I_2,\end{align*}
we  have $I_1\ll Ch$ by \eqref{eq:carl}, while 
\[ I_2 \ll   \int_{1}^{\infty}\Big[\int_{t}^{t+h}|f'(\sigma+i\tau)|^2 d\tau\Big]\sigma d\sigma\ll \int_{t}^{t+h}\Big[\int_{1}^{\infty} \sigma C4^{-\sigma}d\sigma\Big]d\tau \ll Ch.\]
To obtain the final estimate above, we used that $f'(\sigma+it)=O(\sqrt{C} 2^{-\sigma})$, which holds uniformly in $t$ when $\sigma\geq 1$ because $f$ is a Dirichlet series.
\end{proof}

Part (ii) of Theorem~\ref{thm:fefferman} is immediate from this lemma along with a property of almost periodic functions established by  Montgomery \cite[p. 131]{MO} (see also \cite[p. 4]{M}) which asserts that if 
$|a_n|\leq b_n$, then for sums with a finite number of non-zero terms \[ \int_{T_1-T}^{T_1+T} \big|\sum a_n e^{i\lambda_n t}\big|^2dt\leq 3\int_{-T}^T \big|\sum b_n e^{i\lambda_n t}\big|^2dt.\]
Here $T>0$, $T_1$ is a real number, $a_n, b_n$ respectively complex and nonnegative coefficients, 
and $\lambda_n$ are distinct real frequencies.

We will now apply Theorem~\ref{thm:fefferman} to see how our $\BMOA$ space of Dirichlet series relates to Hardy spaces and the Bloch space. We denote as usual $H^{\infty}(\C_0) \cap \mathcal{D}$ by $\mathcal{H}^\infty$, and we say that a function $f(s)$ analytic in $\Real s >0$ is in the Bloch space $\mathfrak{B}$ if 
\[ \| f \|_{\mathfrak{B}}:=\sup_{\sigma+it: \sigma>0} \sigma |f'(\sigma+i t)| < \infty . \] 
We have 
\[ \mathcal{H}^{\infty} \subset \BMOA \cap \mathcal{D} \subset \bigcap_{0<q<\infty} \mathcal{H}^q, \]
where the inclusion to the left is trivial and that to the right was established in \cite[Lem. 2.1]{BPS}. Hence, in contrast to $(\mathcal{H}^1)^*$ itself, the subspace $\BMOA\cap \mathcal{D}$ is included in $ \bigcap_{0<q<\infty} \mathcal{H}^q$. 
Moreover,  is a classical fact and easy to see that $\BMOA \subset \mathfrak{B}$. 

The following consequence of Corollary~\ref{cor:prime} is a Dirichlet series counterpart to a result of Campbell, Cima, and Stephenson \cite{CCS} that further enunciates the relation between the spaces in question. Our proof is close to that found in \cite{HT}.   

 \begin{corollary}\label{cor:bloch}There exist Dirichlet series that belong to $\mathfrak{B}$ and 
 $\bigcap_{0<q<\infty} \mathcal{H}^q$ but not to $\BMOA$.
 \end{corollary}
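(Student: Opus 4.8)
The plan is to find the required series among those supported on the primes, since for a prime-supported Dirichlet series $f(s)=\sum_p a_p p^{-s}$ Corollary~\ref{cor:prime} gives a transparent test for $\BMOA$ membership, and each of the three conditions we must arrange reduces to an elementary statement about the coefficients $a_p$. I will take $a_p\ge 0$ throughout.

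First, the $\mathcal{H}^q$ condition linearizes. The Bohr lift of a prime series is $\mathcal{B}f(z)=\sum_j a_{p_j}z_j$, a polynomial of degree one in the independent variables $z_j$; by Khinchin's inequality for Steinhaus sequences, every $L^q(\T^\infty)$-norm of such a sum is comparable to its $L^2$-norm. Hence $f\in\bigcap_{0<q<\infty}\mathcal{H}^q$ if and only if $\sum_p a_p^2<\infty$. Second, since $a_p\ge 0$ the modulus $|f'(\sigma+it)|$ attains its maximum over each vertical line at $t=0$, so that $\|f\|_{\mathfrak{B}}=\sup_{\sigma>0}\sigma\sum_p a_p(\log p)\,p^{-\sigma}$. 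Third, by Corollary~\ref{cor:prime} it suffices, in order to force $f\notin\BMOA$, to violate the defining condition for the single value $x=e$, that is, to arrange $\sum_{k\ge 1}\big(\sum_{e^k\le p<e^{k+1}}a_p\big)^2=\infty$.

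With these reductions in hand I group the primes into the blocks $\Pi_k:=\{p:\,e^k\le p<e^{k+1}\}$, for which Chebyshev's estimates give $\#\Pi_k\asymp e^k/k$ and $\sum_{p\in\Pi_k}\log p\asymp e^k$. I then fix a lacunary sequence of indices, for instance $n_j:=2^j$, and set $a_p:=n_j e^{-n_j}$ for $p\in\Pi_{n_j}$ and $a_p:=0$ otherwise. The block sum over $\Pi_{n_j}$ is then $\asymp 1$, so the sum in Corollary~\ref{cor:prime} with $x=e$ diverges and $f\notin\BMOA$; moreover $\sum_p a_p^2\asymp\sum_j n_j e^{-n_j}<\infty$, which places $f$ in every $\mathcal{H}^q$. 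For the Bloch norm, a short estimate using $\sum_{p\in\Pi_{n_j}}\log p\asymp e^{n_j}$ and $p^{-\sigma}\le e^{-n_j\sigma}$ on $\Pi_{n_j}$ gives a contribution $\lesssim n_j e^{-n_j\sigma}$ from each active block, whence $\|f\|_{\mathfrak{B}}\lesssim\sup_{\sigma>0}\sigma\sum_j n_j e^{-n_j\sigma}$.

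The crux is this last supremum, and it is exactly where the construction could fail. The functions $\sigma\mapsto\sigma\,n_j e^{-n_j\sigma}$ each peak at $\sigma=1/n_j$ with value $e^{-1}$, and if every block $\Pi_k$ were active the resulting sum would grow like $1/\sigma$ as $\sigma\to 0^+$, so $f$ would not lie in $\mathfrak{B}$. The lacunarity of $(n_j)$ is what reconciles the competing demands: for any fixed $\sigma$ the numbers $n_j\sigma$ are geometrically spaced, so $\sigma\sum_j n_j e^{-n_j\sigma}$ splits into a geometric sum over the indices with $n_j\lesssim 1/\sigma$, bounded by its largest term $\asymp\sigma\cdot(1/\sigma)=1$, plus a geometrically decaying tail over $n_j\gtrsim 1/\sigma$, and is therefore $O(1)$ uniformly in $\sigma$. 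The tension between the non-square-summability of the block sums forced by Corollary~\ref{cor:prime} and the decay of the derivative forced by the Bloch condition is the essential difficulty, and separating the active scales is precisely what resolves it.
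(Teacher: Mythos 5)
Your proposal is correct and is essentially the paper's own proof: the authors use exactly the same example (primes in $[e^{2^j},e^{2^j+1}]$ with $a_p=2^je^{-2^j}$), the same appeal to Corollary~\ref{cor:prime} at $x=e$ to rule out $\BMOA$, and the same Khinchin argument for membership in every $\mathcal{H}^q$. The only cosmetic difference is that you verify the Bloch bound directly from the definition of $\|\cdot\|_{\mathfrak{B}}$ via the lacunary splitting of $\sigma\sum_j n_je^{-n_j\sigma}$, whereas the paper first records the equivalent criterion $\sup_{x\ge 2}\sum_{x\le n<x^2}a_n<\infty$ and applies that; both rest on the same estimate.
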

 \begin{proof}
 It is an easy consequence of the definition of the Bloch space that $\sum_{n=1}^{\infty} a_n n^{-s}$ with $a_n\ge 0$ is in $\mathfrak{B}$ if and only if
\begin{equation}\label{eq:Bloch} \sup_{x\ge 2} \sum_{x\le n <x^2} a_n < \infty. \end{equation}
Indeed, if \eqref{eq:Bloch} holds, then we use it with $x_j=\exp(2^{j}/\sigma),\  x_{j+1}=x_{j}^{2}$, to show that for $\sigma>0$,
\[ \sum_{n\geq 2} a_n\, \sigma \log n\,e^{-\sigma\log n}\le \sum_{j} 2^{-j} \big(\sum_{x_j\leq n<x_{j+1}}a_n\big)\ll  \sum_{j} 2^{-j}.\]
Conversely, if $\sum_{n\geq 2} a_n\, \sigma \log ne^{-\sigma\log n}\leq C$ for all $\sigma>0$, then choosing $\sigma=1/\log x$, we see that the sum on the left-hand side of \eqref{eq:Bloch} 
is bounded by  $C e^2/2$.
Let $\mathbb{P}_j$ be the primes in the interval $[e^{2^j}, e^{2^j+1}]$. Then 
 $|\mathbb{P}_j|\sim (e-1) e^{2^j} 2^{-j}$ by the prime number theorem. Setting $a_p:=e^{-2^j} 2^j$ if $p$ is in $\mathbb{P}_j$ and $a_p=0$ otherwise, we see from \eqref{eq:Bloch} that $\sum_p a_p p^{-s}$ is in the Bloch space, but from part (i) of Theorem~\ref{thm:fefferman} that it fails to be in $\BMOA$. 
 
 We next recall Khinchin's inequality for the Steinhaus variables $Z_p$  (that are i.i.d. random variables with uniform distribution on $\T$): 
 \[
 \mathbb{E} \big| \sum_pa_p Z_p\big|^q\asymp \big(\sum_p|a_p|^2 \big)^{q/2},
 \]
with the implied constants only depending on $q>0$ (see  \cite[Thm. 1]{K}). Since in the Bohr correspondence  $p_k^{-s}$ corresponds to the independent variable $z_k$, we see that they form a sequence of Steinhaus variables with respect to the Haar measure on $\T^\infty$. 
Thus, in view of the bound
 \[ \sum_p a_p^2 \ll \sum_{j=0}^{\infty} e^{-2^j} 2^j < \infty, \]
Khinchin's inequality implies that 
 $\sum_p a_p p^{-s}$  belongs to $\mathcal{H}^q$.
 \end{proof}


\subsection{The relation between Dirichlet series in $H^{\infty}$, $\BMOA$, and $\mathfrak{B}$}
\label{sec:relation}

We turn to some further comparisons between the three spaces $\mathcal{H}^{\infty}$,  $\BMOA\cap \mathcal{D}$, and $\mathfrak{B}\cap \mathcal{D}$. We begin with a discussion of uniform and absolute convergence of Dirichlet series in $\mathfrak{B}\cap \mathcal{D}$.  The following lemma will be useful in this discussion. Here we use the notation $\log_+ x:=\max(0,\log x)$ for $x>0$, and we will also write $(T_c f)(s):=f(s+c)$ in what follows.
\begin{lemma}\label{vari}Suppose that $f(s)=\sum_{n=1}^\infty a_n n^{-s}$  is in  $\mathfrak{B}\cap \mathcal{D}$. Then 
\begin{align}
\label{eq:coeff} |a_n| & \le e \| f \|_{\mathfrak{B}}, \quad n\ge 2, \\ \label{eq:point}
|f(\sigma+it)-a_1| & \le \left(\log_+\frac{1}{\sigma} + C 2^{-\sigma} \right) \| f \|_{\mathfrak{B}}, \quad \sigma>0, \end{align}
for some absolute constant $C$. Up to the precise value of $C$, these bounds are both optimal.
\end{lemma}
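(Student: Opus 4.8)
The plan is to exploit the single structural fact defining $\mathfrak{B}$, namely that $\sigma|f'(\sigma+it)|\le \|f\|_{\mathfrak{B}}$ for all $\sigma>0$, and to convert it into coefficient and pointwise control by integrating $f'$ along horizontal rays toward $+\infty$. For the coefficient bound \eqref{eq:coeff}, I would differentiate the Dirichlet series term by term to get $f'(s)=-\sum_{n\ge 2} a_n (\log n)\, n^{-s}$, and then recover a single coefficient by the standard averaging formula for Dirichlet series: for fixed $\sigma>0$,
\[
-a_n (\log n)\, n^{-\sigma}=\lim_{T\to\infty}\frac{1}{2T}\int_{-T}^{T} f'(\sigma+it)\, n^{i t}\,dt,
\]
so that $|a_n|(\log n)\,n^{-\sigma}\le \sup_t |f'(\sigma+it)|\le \sigma^{-1}\|f\|_{\mathfrak{B}}$. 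This yields $|a_n|\le \frac{n^{\sigma}}{\sigma\log n}\|f\|_{\mathfrak{B}}$ for every $\sigma>0$, and the optimal choice $\sigma=1/\log n$ gives $|a_n|\le \frac{e}{\log n}\cdot\frac{\log n}{1}\|f\|_{\mathfrak{B}}$... more precisely $|a_n|\le e\,\|f\|_{\mathfrak{B}}$, which is \eqref{eq:coeff}.

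For the pointwise bound \eqref{eq:point}, I would integrate $f'$ from the point $\sigma+it$ out to $+\infty$ along the horizontal line, using that $f(u+it)\to a_1$ as $u\to+\infty$ (the tail of the Dirichlet series vanishes). Writing
\[
f(\sigma+it)-a_1=-\int_{\sigma}^{\infty} f'(u+it)\,du,
\]
and inserting $|f'(u+it)|\le u^{-1}\|f\|_{\mathfrak{B}}$, the integral $\int_\sigma^\infty u^{-1}\,du$ diverges, so the naive estimate fails and the bound must be split. The natural split is at $u=1$: on $[\sigma,1]$ (relevant only when $\sigma<1$) one uses $|f'(u+it)|\le u^{-1}\|f\|_{\mathfrak{B}}$ to obtain $\int_\sigma^1 u^{-1}\,du=\log(1/\sigma)$, which produces the $\log_+(1/\sigma)$ term; on $[1,\infty)$ one needs the stronger decay $|f'(u+it)|\le C'2^{-u}\|f\|_{\mathfrak{B}}$, which holds because $f'(u+it)=-\sum_{n\ge 2} a_n(\log n)n^{-u-it}$ is a Dirichlet series whose leading term is $2^{-u}$ and whose coefficients are controlled by \eqref{eq:coeff}; integrating $\int_1^\infty 2^{-u}\,du$ gives the $C2^{-\sigma}$ term (absorbing the case $\sigma\ge 1$ as well). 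Combining the two pieces yields \eqref{eq:point}.

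The main obstacle is the logarithmic divergence near $u\to\infty$ in the crude ray integral: the Bloch bound alone gives only $u^{-1}$ decay of $f'$, which is not integrable at infinity, so one genuinely needs the self-improved exponential decay $|f'(u+it)|\ll 2^{-u}$ for $u\ge 1$ coming from the Dirichlet series structure and the coefficient bound \eqref{eq:coeff}. This is the same mechanism already used in the proof of Lemma~\ref{basaux}. For optimality of the constants, I would exhibit an extremal example: taking $f(s)=\sum_{n} a_n n^{-s}$ with a single large coefficient at an appropriate scale shows that \eqref{eq:coeff} cannot be improved beyond the factor $e$, while a lacunary-type or carefully tuned series demonstrates that the $\log_+(1/\sigma)$ growth in \eqref{eq:point} is sharp as $\sigma\to 0^+$; matching these against the upper bounds (up to the value of $C$) completes the claim of optimality.
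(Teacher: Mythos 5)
Your argument is essentially the paper's: for \eqref{eq:coeff} you bound $|a_n|(\log n)n^{-\sigma}$ by $\sup_t|f'(\sigma+it)|\le\sigma^{-1}\|f\|_{\mathfrak{B}}$ and optimize at $\sigma=1/\log n$, and for \eqref{eq:point} you integrate $f'$ along a horizontal ray, using the Bloch bound near the boundary and coefficient-derived exponential decay at infinity, exactly as in the paper (which writes $|f(\sigma+it)-a_1|\le|f(2+it)-a_1|+\int_\sigma^2\|f\|_{\mathfrak{B}}\,d\alpha/\alpha$ and bounds the first term by $e(\zeta(2)-1)\|f\|_{\mathfrak{B}}$, resp.\ by $e(\zeta(\sigma)-1)\|f\|_{\mathfrak{B}}\ll 2^{-\sigma}\|f\|_{\mathfrak{B}}$ when $\sigma\ge2$). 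One small slip: your justification of $|f'(u+it)|\ll 2^{-u}\|f\|_{\mathfrak{B}}$ on $[1,\infty)$ via \eqref{eq:coeff} fails near $u=1$, since $\sum_{n\ge2}(\log n)n^{-u}=-\zeta'(u)$ blows up like $(u-1)^{-2}$ there; the inequality itself is still true, but you should either move the split point to $u=2$ (as the paper effectively does) or cover $1\le u\le 2$ with the Bloch bound $|f'(u+it)|\le u^{-1}\|f\|_{\mathfrak{B}}\le 4\cdot 2^{-u}\|f\|_{\mathfrak{B}}$. For optimality the paper's explicit witnesses are $n^{-s}$, with $\|n^{-s}\|_{\mathfrak{B}}=1/e$, and $\sum_{n\ge2}n^{-1-s}/\log n$; your sketch should be made concrete along these lines.
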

\begin{proof} To prove \eqref{eq:coeff}, we use that $T_{\varepsilon}f'$ is in $\Hi$ for every  $\varepsilon>0$. By either viewing the coefficients of a Dirichlet series as Fourier coefficients or using that $\| f \|_{\mathcal{H}^2}\le \| f \|_{\mathcal{H}^{\infty}}$, we see that they are dominated by its $\mathcal{H}^\infty$ norm. We therefore have \[ |a_n|(\log n) n^{-\varepsilon}\leq \Vert T_{\varepsilon}f' \Vert_\infty \leq \frac{\Vert f\Vert_{\mathfrak{B}}}{\varepsilon} \] 
and hence 
\[ |a_n|\leq \frac{n^{\varepsilon}\Vert f\Vert_{\mathfrak{B}}}{\varepsilon \log n}.\]
We conclude by taking $\varepsilon=1/\log n$. In addition, we notice that the bound is optimal because $\| n^{-s} \|_{\mathfrak{B}}=1/e$. 

To prove \eqref{eq:point}, we begin by noticing that \eqref{eq:coeff} implies that 
\begin{equation} \label{eq:large} |f(\sigma+it)-a_1| \le \sum_{n=2}^{\infty} |a_n| n^{-\sigma} \le e (\zeta(\sigma)-1) \| f \|_{\mathfrak{B}} \end{equation}
holds for $\sigma\ge 2$.
For $\sigma\le 2$, we use that 
\[ |f(\sigma+it)-a_1| \le |f(2+it)-a_1|+ \int_{\sigma}^2 \| f \|_{\mathfrak{B}} \frac{d\alpha}{\alpha} \le \left(\log \frac{1}{\sigma} + C\right)\| f \|_{\mathfrak{B}},  \]
where we in the final step used \eqref{eq:large} with $\sigma=2$. The example $\sum_{n=2}^{\infty} n^{-1-s}/\log n$ shows that the inequality is optimal, up to the precise value of $C$.
\end{proof}
The pointwise bound \eqref{eq:point} implies that what is known about uniform and absolute convergence of Dirichlet series in 
$\mathcal{H}^{\infty}$ carries over in a painless way to $\mathfrak{B}\cap \mathcal{D}$. In fact, a rather weak bound of the form
\begin{equation} \label{eq:gen}  |f(\sigma+it)|\le C(\sigma), \quad \sigma>0, \end{equation}
suffices to draw such a conclusion, as will now be explained. To begin with we will assume that $C(\sigma)$ is an arbitrary positive function and later specify its required behavior as $\sigma\to 0^+$.

First, by a classical theorem of Bohr \cite[p. 145]{MAHE}, a bound like \eqref{eq:gen} implies that
the Dirichlet series of $f(s)$ converges uniformly in every half-plane $\Real s \ge \sigma_0>0.$ Following Bohr, we then see that 
 $\sigma_{u}(f)\leq 0$, where $\sigma_u(f)$ is the abscissa of uniform convergence, defined as the infimum over those $\sigma_0$ such that the Dirichlet series of $f(s)$ converges uniformly in $\Real s \ge \sigma_0$.

Second, as observed by Bohr, it is immediate that $\sigma_{u}(f)\leq 0$ implies $\sigma_{a}(f)\leq 1/2$, where $\sigma_{a}(f)$ is the abscissa of absolute convergence of $f$, i.e., the infimum over those $\sigma_0$ such that the Dirichlet series of $f(s)$ converges absolutely in $\Real s \ge \sigma_0$. Thanks to more recent work originating in \cite{BCQ}, an interesting refinement of this result holds when $C(\sigma)$ does not grow too fast as $\sigma\searrow 0$. To arrive at that refinement, we set $(S_N f)(s):=\sum_{n=1}^{N} a_n n^{-s} $ and recall that 
\begin{equation}\label{improv2}\sum_{n=1}^N|a_n| \leq \sqrt{N} e^{-c_N \sqrt{\log N \log\log N}}\Vert S_N f\Vert_\infty  \end{equation}
with $c_N\to 1/\sqrt{2}$ when $N\to \infty$. This ``Sidon constant'' estimate was proved in \cite{KQ} with a smaller value of $c_N$. The proof from \cite{KQ}, using at one point the hypercontractive Bohnenblust--Hille inequality from \cite{DFOOS}, yields \eqref{improv2} with
$c_N\to 1/\sqrt{2}$, which is stated as Theorem 3 in \cite{DFOOS}. This is optimal by \cite{dB}.

It was proved in \cite{BCQ} that there exists an absolute constant $C$ such that if $f(s):=\sum_{n=1}^{\infty}{a_nn^{-s}}$ is in $\mathcal{H}^{\infty}$, then 
$ \| S_N f\|_{\infty}\le C \log N \| f\|_{\infty} $. See also Section~\ref{sec:partial}, where an alternate proof of this bound will be given. Using this fact, we obtain from \eqref{improv2} that
\begin{equation}\label{improv3}\sum_{n=1}^N|a_n| \leq \sqrt{N} e^{-c_N \sqrt{\log N \log\log N}}\Vert f \Vert_\infty , \end{equation}
still with $c_N\to 1/\sqrt{2}$ when $N\to \infty$. Now applying \eqref{improv3} to $T_{\varepsilon} f $ with $\varepsilon=1/\log N$ and taking into account 
\eqref{eq:gen}, we get
\[ \sum_{n=1}^N |a_n| \le e \sum_{n=1}^N |a_n| n^{-\varepsilon} \le \sqrt{N} e^{-c_N \sqrt{\log N \log\log N}} C(1/\log N). \]
We now see that if $\log C(\sigma)=o(\sqrt{|\log \sigma |/\sigma })  $ when $\sigma\searrow 0$, then 
\begin{equation} \label{eq:imp}\sum_{n=1}^N |a_n|  \le \sqrt{N} e^{-c_N \sqrt{\log N \log\log N}} \end{equation}
with $c_N\to 1/\sqrt{2}$. When $f$ is in $\mathfrak{B}$, we have $C(\sigma)=O(|\log \sigma|)$ and hence \eqref{eq:imp} clearly holds. Summing by parts and using \eqref{eq:imp}, we get
\begin{equation} \label{eq:sum} \sum_{n=3}^{\infty} \frac{|a_n|}{\sqrt{n}} e^{{c \sqrt{\log n \log\log n}}} < \infty \end{equation}
for every $c<1/\sqrt{2}$. This is a bound previously known to hold for functions $f$ in $\mathcal{H}^{\infty}$ (see  \cite{BCQ, DFOOS}). As shown in \cite{DFOOS}, the result is optimal in the sense that there exist functions $f$ in $\mathcal{H}^{\infty}$ for which the series in \eqref{eq:sum} diverges when $c>1/\sqrt{2}$.
 
In Section~\ref{sec:poly}, we will establish ``reverse''  inequalities to $\| f \|_{\mathfrak{B}} \le \| f \|_{\infty} $ and $\| f \|_{\mathfrak{B}} \ll \| f\|_{\BMOA}$ when $f(s)=\sum_{n=1}^N a_n n^{-s}$ and $N$ is fixed.   
\subsection{A condition for random membership in  $\BMOA \cap \mathcal{D}$}
 In the sequel, if $f(s)=\sum_{n=1}^\infty a_n n^{-s}$ is a Dirichlet series, we denote by $f_\omega$ the corresponding randomized Dirichlet series, namely $f_{\omega}(s):=\sum_{n=1}^\infty \varepsilon_{n}(\omega)a_n n^{-s}$ where $(\varepsilon_n)$ is a standard Rademacher sequence.
 We are interested in extending the following result of Sledd \cite{S} (see also \cite{DUR}) to the setting of ordinary Dirichlet series:
 \begin{theorem}\label{dur}Suppose $\sum_{n=1}^\infty |a_n|^2 \log n<\infty$. Then, the power series $\sum \varepsilon_n a_n z^n$ is almost surely in 
$\BMOA$. \end{theorem}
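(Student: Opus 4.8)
The plan is to run the classical Carleson-measure description of $\BMOA$ on the disc and to verify the Carleson condition almost surely, treating at each dyadic scale a low-frequency part by a maximal (Salem--Zygmund) estimate and a high-frequency part by concentration of a Rademacher chaos. Write $g_\omega(z)=\sum_n \varepsilon_n a_n z^n$. Since $\sum_n|a_n|^2\log n<\infty$ forces $\sum_n|a_n|^2<\infty$, almost surely $g_\omega\in H^2(\mathbb{D})$, and by the classical disc analogue of Lemma~\ref{basaux} it suffices to show that almost surely $d\mu_\omega(z):=|g_\omega'(z)|^2(1-|z|^2)\,dA(z)$ is a Carleson measure for $\mathbb{D}$. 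As every arc is comparable to a bounded union of dyadic ones, this reduces to proving that almost surely $\sup_{k\ge 0,\,0\le j<2^k} 2^k\,\mu_\omega\big(S(I_{k,j})\big)<\infty$, where $I_{k,j}$ are the dyadic arcs of length $\asymp 2^{-k}$ and $S(I_{k,j})$ the associated Carleson boxes. I organize the coefficients into dyadic frequency blocks by setting $b_\ell^2:=\sum_{2^\ell\le n<2^{\ell+1}}|a_n|^2$, so that the hypothesis becomes $\sum_\ell \ell\,b_\ell^2<\infty$; this is the resource the whole argument spends, and it is equivalent to $\sum_{\ell\ge k}b_\ell^2=o(1/k)$ together with $k\sum_{\ell\le k}4^{\ell-k}b_\ell^2=o(1)$.

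Fixing a scale $k$, I split $g_\omega=g_\omega^{\mathrm{lo}}+g_\omega^{\mathrm{hi}}$ at frequency $2^k$ and estimate $X_B:=\mu_\omega(S(I_{k,j}))$ through the two pieces. For the high part, $\int_{S(I_{k,j})}|(g_\omega^{\mathrm{hi}})'|^2(1-|z|^2)\,dA$ is a positive second-order Rademacher chaos in $\{\varepsilon_n\}_{n\ge 2^k}$; by $\mathbb{E}[\varepsilon_n\varepsilon_m]=\delta_{nm}$ its mean is $\asymp 2^{-k}\sum_{\ell\ge k}b_\ell^2$, while the crucial point is that the radial damping $|z|^{2n}\approx e^{-2n(1-|z|)}$ together with the oscillatory angular integral, which contributes a factor $\min(2^{-k},|n-m|^{-1})$, forces the off-diagonal variance to obey $2^{2k}\sigma_B^2\ll \big(\sum_{\ell\ge k}b_\ell^2\big)\big(\sum_n|a_n|^2\big)$. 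A Hanson--Wright tail bound then yields, for a fixed large threshold $\Lambda$, the estimate $\mathbb{P}\big(2^kX_B^{\mathrm{hi}}>\Lambda\big)\ll \exp\!\big(-c\,\Lambda^2/(2^{2k}\sigma_B^2)\big)$. For the low part, $g_\omega^{\mathrm{lo}}$ varies little across a box of size $2^{-k}$, so that $2^k\int_{S(I_{k,j})}|(g_\omega^{\mathrm{lo}})'|^2(1-|z|^2)\,dA\ll \big[\sup_{z\in S(I_{k,j})}(1-|z|)\,|(g_\omega^{\mathrm{lo}})'(z)|\big]^2$, and a Salem--Zygmund maximal estimate on the circle $|z|=1-2^{-k}$ bounds this, off an event of probability $\ll e^{-ck}$, by $k\sum_{\ell\le k}4^{\ell-k}b_\ell^2$.

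The main obstacle is to make these two bounds survive the union bound over the $\sim 2^k$ boxes at scale $k$ with one fixed Carleson constant, and then to sum in $k$ so that Borel--Cantelli applies. This is exactly where $\sum_\ell \ell\,b_\ell^2<\infty$ is consumed: $\sum_{\ell\ge k}b_\ell^2=o(1/k)$ makes $2^k\exp(-c\Lambda^2/(2^{2k}\sigma_B^2))$ summable in $k$ for the high-frequency chaos, while $k\sum_{\ell\le k}4^{\ell-k}b_\ell^2=o(1)$ keeps the low-frequency maximal contribution uniformly bounded and its exceptional probabilities summable. Fixing $\Lambda$ large and adding the contributions of the two pieces, Borel--Cantelli then shows that almost surely only finitely many boxes violate $2^k\mu_\omega(S(I_{k,j}))\le \Lambda$, which is the Carleson condition and hence gives $g_\omega\in\BMOA$ almost surely. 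I expect the one genuinely delicate estimate to be the frequency localization of the chaos variance via the angular and radial integrations; the remainder is orthogonality, the two standard concentration inequalities, and dyadic bookkeeping.
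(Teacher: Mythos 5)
You should first note that the paper does not prove Theorem~\ref{dur} at all: it is Sledd's theorem, quoted from \cite{S} (see also \cite{DUR}), and the paper's own contribution is the extension Theorem~\ref{proba}, whose proof specializes (one prime, $z=p^{-s}$) to a proof of Theorem~\ref{dur}. That proof is structurally much lighter than yours: it bounds the Carleson integral of Lemma~\ref{basaux} crudely by $h\int_0^1\sigma\,\Vert T_\sigma f_\omega'\Vert_\infty^2\,d\sigma$ and shows that this single random variable has \emph{finite expectation}, using a Salem--Zygmund bound (Lemma~\ref{trois}), the Khintchine--Kahane inequality, partial summation and Hilbert's inequality. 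A first-moment argument needs no union bound, no box-by-box concentration, and no Borel--Cantelli. Your scheme --- dyadic Carleson boxes, a low/high frequency split at $2^k$, Salem--Zygmund for the low part and chaos concentration for the high part --- is a genuinely different route; the low-frequency half and the dyadic bookkeeping (including $k\sum_{\ell\le k}4^{\ell-k}b_\ell^2=o(1)$ and $\sum_{\ell\ge k}b_\ell^2=o(1/k)$) do check out.

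The gap is in the high-frequency tail bound. The claimed estimate $\mathbb{P}\big(2^kX_B^{\mathrm{hi}}>\Lambda\big)\ll\exp\big(-c\Lambda^2/(2^{2k}\sigma_B^2)\big)$ is not what Hanson--Wright gives in the regime where you use it: for a second-order Rademacher chaos with coefficient matrix $C$ the tail is $\exp\big(-c\min\big(t^2/\Vert C\Vert_{\mathrm{HS}}^2,\;t/\Vert C\Vert_{\mathrm{op}}\big)\big)$, and the sub-Gaussian branch is only valid for $t\le\Vert C\Vert_{\mathrm{HS}}^2/\Vert C\Vert_{\mathrm{op}}\le\Vert C\Vert_{\mathrm{HS}}$, whereas you apply it with $t=\Lambda$ fixed and $2^k\Vert C\Vert_{\mathrm{HS}}=2^k\sigma_B\ll\big(\sum_{\ell\ge k}b_\ell^2\big)^{1/2}=o(k^{-1/2})\to0$, i.e.\ deep in the exponential branch. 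With only the information you record, namely $\Vert C\Vert_{\mathrm{op}}\le\Vert C\Vert_{\mathrm{HS}}$, the exponent is only of order $\Lambda\sqrt{k}$, and $2^k e^{-c\Lambda\sqrt{k}}$ is not summable: the union bound over the $\sim2^k$ boxes at scale $k$ collapses, so the step fails as written --- and it is precisely the step you yourself flag as the delicate one. It can be repaired: your quadratic form has the positive Gram matrix $C=\big(a_n\overline{a_m}\,nm\int_{S}z^{n-1}\bar z^{m-1}(1-|z|^2)\,dA\big)$, so $\Vert C\Vert_{\mathrm{op}}\le\operatorname{tr}C\asymp2^{-k}\sum_{\ell\ge k}b_\ell^2=o(2^{-k}k^{-1})$, which pushes the exponential branch to $\Lambda/(2^k\Vert C\Vert_{\mathrm{op}})\gg\Lambda k\cdot\omega(1)$ and restores summability against the $2^k$ boxes. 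You need to add that positivity/trace argument (or replace Hanson--Wright by a decoupling plus Salem--Zygmund estimate on the high block) before the Borel--Cantelli step is legitimate.
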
  
This result is optimal in a rather strong sense as shown in \cite{ACP}: If one replaces $\log n$ by any sequence growing at a slower rate, then the condition does not  guarantee membership even in the Bloch space.

We see from Theorem~\ref{dur} that if we require slightly more than $\ell^2$ decay of the coefficients, then we may expect that a ``generic'' analytic function in the unit disc will be in $\BMOA$. The results of the preceding sections show in two respects that a similarly strong result can not hold in the context of Hardy spaces of Dirichlet series. 
First, we know that $f(s)=\sum_{p} a_p p^{-s}$ is in $\BMOA\cap \mathcal{D}$ if and only if \eqref{eq:bmo} of Corollary~\ref{cor:prime} holds, and by the Cauchy--Schwarz inequality, this implies in particular that the abscissa of absolute convergence is $0$. Hence
\[ \sum_{p} \pm p^{-\alpha-s} \]
can not be in $\BMOA\cap \mathcal{D} $ for any choice of the signs $\pm$ when $1/2<\alpha<1$, although, from an $\ell^2$ point of view, the coefficients decay fast when $\alpha$ is close to $1$. Second, in view of \eqref{eq:sum}, none of the Dirichlet series
 \[f(s):=\sum_{n=2}^\infty \pm \frac{1}{\sqrt {n}} \exp\Big(-c \sqrt{\log n\log\log n}\Big)\, n^{-s},\quad  0<c<1/\sqrt 2, \] 
with random signs $\pm$ can be in $\BMOA\cap \mathcal{D}$, again in spite of fairly good $\ell^2$ decay of the coefficients.

These observations indicate that we should impose an extra condition to obtain a result of the same strength as that of Theorem~\ref{dur}. In fact, they suggest that a possible remedy could be to consider integers generated by a very thin sequence of primes. We will therefore assume that we are in this situation with a fixed set $\mathcal{P}_0$ (finite or not) of prime numbers. We will measure the thinness of this set in terms of its distribution function
 \[ \pi_{0}(x):=\sum_{p\in \mathcal{P}_0,
 p\leq x} 1. \]
 We will say that $\mathcal{P}_0$ is an ultra-thin set of primes if 
 \begin{equation} \label{eq:ultra} 
\int_{3}^{\infty}  \frac{\pi_{0}(x)\log\log x}{x\log^{3}x}dx<\infty ,
 \end{equation}
 and we declare the numbers $w_1=w_2=1$,
 \[ w_n:=\int_{n}^{\infty}  \frac{\pi_{0}(x)\log\log x}{x\log^{3}x}dx , \quad n\ge 3,\]
 to constitute the weight sequence of $\mathcal{P}_0$. We denote by $\mathcal{N}_0$  the set of all  $\mathcal{P}_0$-smooth integers, i.e., the set of positive integers with all their prime divisors belonging to $\mathcal{P}_0$. Our extension of Theorem~\ref{dur} now reads as follows.
 \begin{theorem}\label{proba} Let $\mathcal{P}_0$ be an ultra-thin set of primes with weight sequence $(w_n)$. If
 \begin{equation} \label{eq:durendir} \sum_{n\in \mathcal{N}_0} |a_n|^2 w_n  \log^{2} n <\infty, \end{equation}
 then the Dirichlet series $ f_{\omega}(s)=\sum_{n\in \mathcal{N}_0} \varepsilon_n a_n n^{-s}$ is almost surely in $\BMOA\cap\mathcal{D}$.  
 \end{theorem}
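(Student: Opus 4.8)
The plan is to decide membership in $\BMOA\cap\mathcal{D}$ through the Carleson measure condition of Lemma~\ref{basaux}. First note that the hypothesis is in fact stronger than square-summability: a direct estimate of the tail integral defining $w_n$ (bounding the contribution of $x\in[n,n^2]$) gives $w_n\log^2 n\gtrsim \pi_0(n)\log\log n\to\infty$ for $n\in\mathcal{N}_0$, so \eqref{eq:durendir} forces $\sum_{n\in\mathcal{N}_0}|a_n|^2<\infty$ and hence $f_\omega\in\mathcal{H}^2\cap\mathcal{D}\subset H^2_{\operatorname{i}}(\C_0)\cap\mathcal{D}$ almost surely. Lemma~\ref{basaux} therefore applies, and the goal becomes to show that almost surely
\[ \sup_{0<h\le 1}\,\sup_{t\in\mathbb{R}}\,\frac1h\int_0^h\int_t^{t+h}|f_\omega'(\sigma+i\tau)|^2\,\sigma\,d\tau\,d\sigma<\infty. \]
Since $f_\omega'(\sigma+i\tau)=-\sum_{n\in\mathcal{N}_0}\varepsilon_n a_n(\log n)\,n^{-\sigma}e^{-i\tau\log n}$, Fubini and orthogonality of the $\varepsilon_n$ give, for every box,
\[ \mathbb{E}\int_0^h\int_t^{t+h}|f_\omega'(\sigma+i\tau)|^2\,\sigma\,d\tau\,d\sigma=h\int_0^h\sum_{n\in\mathcal{N}_0}|a_n|^2(\log n)^2 n^{-2\sigma}\,\sigma\,d\sigma\ll h\sum_{n\in\mathcal{N}_0}|a_n|^2. \]
Thus in expectation the Carleson condition already follows from square-summability; the entire content of the theorem lies in the almost sure control of the fluctuations, uniformly over all translates $t$ and all scales $h$.

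I would dyadically decompose, writing $h=2^{-\ell}$ and splitting $\sigma\in[2^{-m-1},2^{-m}]$ with $m\ge\ell$. The damping $n^{-2\sigma}=e^{-2\sigma\log n}$ shows that at height $\sigma$ only the integers with $\log n\lesssim 1/\sigma$ contribute. The crucial observation is that all such frequencies $\log n$ are integer combinations of the generating frequencies $\{\log p:\ p\in\mathcal{P}_0,\ \log p\le 1/\sigma\}$, whose number is $\pi_0(e^{1/\sigma})$; this count governs the metric entropy of the almost periodic function $\tau\mapsto f_\omega'(\sigma+i\tau)$ at this height, and hence the cardinality of a net in $\tau$ needed to capture the supremum over $t$. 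To pass from $\sup_{t\in\mathbb{R}}$ to a bounded window I would invoke the almost periodicity of Dirichlet series through Montgomery's inequality, exactly as in the proof of part~(ii) of Theorem~\ref{thm:fefferman}, and then discretize that window by a net of spacing comparable to $h$.

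Next I would estimate, for each fixed net point, the centered box integral: this is a mean-zero Rademacher chaos of order two in the variables $\varepsilon_n$, whose deviations obey a subgaussian/subexponential tail bound (via hypercontractivity, controlled by the Hilbert--Schmidt and operator norms of the associated kernel). A union bound over the $\pi_0(e^{1/\sigma})$-governed net at a given scale, together with a Borel--Cantelli argument over the dyadic parameters $(\ell,m)$, should upgrade the expectation estimate to an almost sure uniform bound, at the cost of an entropy factor counting the effective directions and a $|\log\sigma|$ factor accounting for the summation over scales.

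The bookkeeping is designed to reproduce the weight. The effect of the maximal inequality is to replace the benign scale integrand $\sigma\,d\sigma$ appearing in the expectation by the cost-weighted integrand $\pi_0(e^{1/\sigma})\,|\log\sigma|\,\sigma\,d\sigma$, so that the contribution of a fixed $n$, summed over the admissible range $\sigma\lesssim 1/\log n$, becomes
\[ |a_n|^2(\log n)^2\int_0^{1/\log n}\pi_0(e^{1/\sigma})\,|\log\sigma|\,\sigma\,d\sigma. \]
The substitution $x=e^{1/\sigma}$ identifies the integral with $w_n=\int_n^\infty \pi_0(x)\log\log x\,(x\log^3 x)^{-1}\,dx$, the two derivatives supply the factor $\log^2 n$, and the ultra-thinness hypothesis \eqref{eq:ultra} guarantees $w_1<\infty$, which makes the series of probabilities converge. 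Convergence of the whole union bound is then precisely the hypothesis $\sum_{n\in\mathcal{N}_0}|a_n|^2 w_n\log^2 n<\infty$. The hard part will be the maximal inequality controlling $\sup_{t\in\mathbb{R}}$ at each scale with the sharp count $\pi_0(e^{1/\sigma})$ of degrees of freedom: one must simultaneously tame the full line by almost periodicity and keep the entropy estimate sharp enough (the primes, not all smooth numbers, being the relevant directions) so that the union bound is summable exactly under the stated weighted condition rather than a lossy one.
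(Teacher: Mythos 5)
Your reduction is the right one: the paper also passes through Lemma~\ref{basaux}, and your observation that \eqref{eq:durendir} forces $\sum|a_n|^2<\infty$ (so that $f_\omega\in H^2_{\operatorname{i}}(\C_0)$ a.s.\ and the lemma applies) is correct and is implicit in the paper. You have also correctly located where the weight must come from. But the proof does not close: the entire load-bearing step --- a maximal inequality controlling $\sup_{t\in\mathbb{R}}$ of the random almost periodic sums with the sharp entropy count $\pi_0$ --- is described, flagged as ``the hard part,'' and never established. That step is precisely the Salem--Zygmund/Kahane estimate the paper imports as Lemma~\ref{trois}: for a randomized $\mathcal{P}_0$-smooth Dirichlet polynomial of length $n$, $\mathbb{E}\Vert P_\omega\Vert_\infty \ll \big(\sum|a_k|^2\big)^{1/2}\sqrt{\pi_0(n)\log\log n}$. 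Without this (or a fully executed chaining argument proving it), the argument is a plan, not a proof. Moreover, your bookkeeping attributes the two factors to the wrong sources: in the correct accounting both $\pi_0(x)$ and $\log\log x$ arise together from squaring the single entropy bound $\log(\text{net cardinality})\asymp \pi_0(x)\cdot\log(\text{degree of the Bohr lift})\asymp\pi_0(x)\log\log x$; there is no separate ``$|\log\sigma|$ cost for summing over scales.'' That your substitution $x=e^{1/\sigma}$ still reproduces $w_n$ is a fortunate coincidence of the heuristic, not a derivation.

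You should also know that the machinery you propose downstream of the maximal inequality (order-two chaos tail bounds, nets in $t$, union bounds over dyadic $(\ell,m)$, Borel--Cantelli) is unnecessary. The paper's proof is a pure first-moment argument: one dominates the box integral deterministically by $h\int_0^1\sigma\Vert T_\sigma f_\omega'\Vert_\infty^2\,d\sigma=:hX(\omega)$, writes $|(T_\sigma f_\omega')(it)|\le\int_3^\infty\sigma x^{-\sigma-1}|S(x,t)|\,dx$ by partial summation, applies Khintchine--Kahane and Lemma~\ref{trois} to get $\mathbb{E}\Vert T_\sigma f_\omega'\Vert_\infty^2\ll\big(\int_3^\infty\sigma x^{-\sigma-1}B(x)\sqrt{\pi_0(x)\log\log x}\,dx\big)^2$, and then decouples the resulting double integral with Hilbert's inequality to conclude $\mathbb{E}(X)\ll\sum_n|a_n|^2w_n\log^2 n<\infty$, whence $X<\infty$ a.s. No almost-sure concentration or exceptional-set analysis is needed once the sup over $t$ is handled in expectation. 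If you want to salvage your route, the single thing to supply is the proof of the $\sqrt{\pi_0(x)\log\log x}$ maximal estimate; everything else in your outline can then be discarded in favor of taking a plain expectation.
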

 Let us first note that this is in fact a true extension of Theorem~\ref{dur}, i.e., it reduces to Theorem~\ref{dur} when $\mathcal{P}_0$ consists of a single prime. To see this, we first observe that if 
 $\pi_0(x) \ll \log^{\delta} x$ for some $\delta$, $0\le \delta < 2$, then $\mathcal{P}_0$ is ultra-thin and
 $w_n \ll (\log\log n)/\log^{2-\delta} n$. In particular, in the special case when 
$\mathcal{P}_0$ is a finite set, we find that $w_n\asymp (\log\log n)/\log^2 n$ and hence 
the series in \eqref{eq:durendir} becomes  $\sum_{n\in \mathcal{N}_0} |a_n|^2 \log\log n $. 
If $\mathcal{P}_0$ consists of a single prime $p$, then the Dirichlet series over $\mathcal{N}_0$ 
becomes a Taylor series in the variable $z:=p^{-s}$ and $\log\log n=\log k + \log\log p\sim \log k$ 
for $n=p^k$, and hence \eqref{eq:durendir} becomes the condition of Theorem~\ref{dur}. Finally, 
we note that, plainly, the Dirichlet series over the numbers $p^k$ will be in $\BMOA(\mathbb{C}_0)$ 
if and only if the corresponding Taylor series in the variable $z$ is in $\BMOA(\T)$. 
In view of this relation between Theorem~\ref{dur} and Theorem~\ref{proba}, we see by again appealing to \cite{ACP} that we cannot replace $\log^2 n$ by any sequence growing at a slower rate.
 
For the proof of Theorem~\ref{proba}, we begin by observing that for fixed $\sigma>0$, we have
\[ \mathbb{E}\Big(\int_{-\infty}^\infty \frac{|f_{\omega}(\sigma+it)|^2}{t^2+1}dt\Big)=\pi \sum_{n=1}^\infty |a_n|^2 n^{-2\sigma}\leq \pi \sum_{n=1}^\infty |a_n|^2, \]
and hence $f_{\omega}$ is almost surely in $H_{\operatorname{i}}^{2}(\C_0)$. This means that we may base our proof on Lemma~\ref{basaux}.

The rest of the proof of Theorem~\ref{proba} relies on a lemma from \cite {BCQ} (see also \cite[Theorem 5.3.4]{MAHE}) which is deduced, via the Bohr lift, from a multivariate analogue of a classical  inequality of  Salem and Zygmund due to Kahane \cite[Thm. 3, Sect. 6]{KAH2}.
 \begin{lemma}\label{trois}There exists an absolute constant $C$ such that if  $P(s)=\sum_{k=1}^n a_k k^{-s}$ is a $\mathcal{P}_0$-smooth Dirichlet polynomial of length $n \ge 3$ and $P_\omega$ the corresponding randomized polynomial, then 
 \[ \mathbb{E}(\Vert P_\omega\Vert_\infty) \le C \big(\sum_{k=1}^n |a_k|^2\big)^{1/2}\sqrt{\pi_{0}(n)}\sqrt{\log\log n}.\] 
 \end{lemma}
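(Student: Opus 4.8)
The plan is to prove Lemma~\ref{trois} by reducing the estimate for a $\mathcal{P}_0$-smooth Dirichlet polynomial to a multivariate trigonometric polynomial on a finite-dimensional torus via the Bohr lift, and then applying the Salem--Zygmund type inequality of Kahane. First I would list the distinct primes $p_{i_1},\dots,p_{i_r}$ belonging to $\mathcal{P}_0$ that actually occur as divisors of the integers $k\le n$ with $a_k\neq 0$; since every such $k$ is $\mathcal{P}_0$-smooth and at most $n$, the number $r$ of relevant primes is bounded (crudely $r\le \pi_0(n)$, and more to the point each exponent is $O(\log n)$). Under the Bohr correspondence $p_{i_\ell}^{-s}\mapsto z_\ell$, the polynomial $P(s)=\sum_{k=1}^n a_k k^{-s}$ becomes a trigonometric polynomial $Q(z)=\sum_k a_k z^{\kappa(k)}$ on $\T^r$, and by the standard identity $\|P\|_{\mathcal{H}^\infty}=\|Q\|_{L^\infty(\T^r)}$ (together with the fact that $\sum|a_k|^2=\|Q\|_{L^2}^2$), the randomized polynomial $P_\omega$ corresponds to the randomized trigonometric polynomial $Q_\omega$.

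The heart of the matter is Kahane's multivariate Salem--Zygmund inequality \cite[Thm.~3, Sect.~6]{KAH2}: for a random trigonometric polynomial on $\T^r$ with spectrum contained in a set of frequencies of bounded modulus, the expected sup-norm is controlled by the $\ell^2$ norm of the coefficients times a factor of the shape $\sqrt{\log(\#\text{frequencies})}$ (or, after tracking the degree dependence, $\sqrt{r\log(\text{degree})}$). Here I would invoke the version already packaged for Dirichlet series in \cite{BCQ} (see also \cite[Theorem 5.3.4]{MAHE}), which states exactly that there is an absolute constant $C$ with
\[ \mathbb{E}(\|P_\omega\|_\infty) \le C\Big(\sum_{k=1}^n |a_k|^2\Big)^{1/2}\sqrt{\log\mathcal{N}}, \]
where $\mathcal{N}$ counts the number of frequencies (equivalently, the number of distinct integers $k\le n$ in the support). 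The decisive step is then to bound this combinatorial logarithmic factor by $\sqrt{\pi_0(n)}\sqrt{\log\log n}$. Since each $k$ in the support is a $\mathcal{P}_0$-smooth integer not exceeding $n$, its prime factorization uses only primes $p\le n$ from $\mathcal{P}_0$, of which there are $\pi_0(n)$, each with exponent at most $\log_2 n$. A counting of such smooth integers gives $\log\mathcal{N}\ll \pi_0(n)\,\log\log n$: writing $\mathcal{N}\le \prod_{p\in\mathcal{P}_0,\,p\le n}(1+\log_p n)\le (1+\log_2 n)^{\pi_0(n)}$ and taking logarithms yields $\log\mathcal{N}\le \pi_0(n)\log(1+\log_2 n)\ll \pi_0(n)\log\log n$, whence $\sqrt{\log\mathcal{N}}\ll \sqrt{\pi_0(n)}\sqrt{\log\log n}$, as required.

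I expect the main obstacle to be the bookkeeping that turns the abstract frequency-count in Kahane's inequality into the precise arithmetic factor $\sqrt{\pi_0(n)}\sqrt{\log\log n}$: one must argue that only primes up to $n$ can appear and that the exponents are logarithmically bounded, and then extract the clean product bound for the number of $\mathcal{P}_0$-smooth integers below $n$. A secondary technical point is to confirm that the hypotheses of the cited Salem--Zygmund/Kahane inequality are met in the form stated in \cite{BCQ} (in particular that the relevant logarithmic factor is governed by the cardinality of the spectrum rather than by the ambient dimension $r$), so that no spurious dependence on $r$ survives. Once these counting estimates are in place, the rest is a direct substitution, and the constant $C$ remains absolute because Kahane's inequality supplies an absolute constant and the passage through the Bohr lift is an isometry.
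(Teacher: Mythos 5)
Your proposal is correct and follows exactly the route the paper indicates: the paper does not write out a proof but cites \cite{BCQ} and \cite[Thm.~5.3.4]{MAHE}, describing the lemma as deduced via the Bohr lift from Kahane's multivariate Salem--Zygmund inequality, which is precisely your argument. Your counting step $\log\mathcal{N}\le\pi_0(n)\log(1+\log_2 n)\ll\pi_0(n)\log\log n$ is the right way to extract the stated factor, and it works whether Kahane's bound is phrased in terms of the cardinality of the spectrum or of the box of degrees, since both are dominated by $\prod_{p\in\mathcal{P}_0,\,p\le n}(1+\log_p n)$.
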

 Here the price we pay for estimating the uniform norm on the whole of $\mathbb{R}$ is this additional factor $\sqrt{\pi_{0}(n)}$. By considering the randomization (i.e. adding random signs) of the Dirichlet polynomial $\sum_{1\leq k\leq N}p_k^{-s}$ (or randomizing more complicated polynomials of the form  $\sum_{1\leq k\leq N}p_k^{-s}g(p_{N+k}^{-s})$), with a fixed standard polynomial $g$, we see that this extra factor is more or less mandatory.

 \begin{proof}[Proof of Theorem~\ref{proba}] We may for convenience assume that $a_2=0$. Let $X$ be the  random variable defined by 
 \begin{equation}\label{rava}X(\omega):=\int_{0}^1 \sigma\, \Vert T_{\sigma} f'_{\omega} \Vert_{\infty}^{2}d\sigma.\end{equation}
 We will prove that $\mathbb{E}(X)<\infty$. This will imply that $X(\omega)<\infty$ a.s., hence that $f_\omega$ is in $\BMOA\cap \mathcal{D}$ a.s. in view of Lemma \ref{basaux}. 
 
 We fix $\sigma>0$ and set 
 \[ S(x,t):=-\sum_{3\le j\le x} \varepsilon_{j} a_j (\log j) j^{-it} \quad \text{and} \quad  B(x):=\Big(\sum_{3\le j\le x} |a_j|^2 \log^{2}j\Big)^{1/2}. \]
 Since  $(T_{\sigma} f'_{\omega})(it)=-\sum_{n=3}^\infty \varepsilon_n\,a_n (\log n)\, n^{-it} n^{-\sigma}$, we find by partial summation that
 \[ \big|(T_{\sigma} f'_{\omega})(it)\big|\le \int_3^\infty \sigma x^{-\sigma-1}|S(x,t)| dx.\]
 Now using the $L^1-L^2$ Khintchin--Kahane inequality and Lemma~\ref{trois}, we find that
 \begin{equation}\label{khka}\mathbb{E}\big(\big\Vert T_{\sigma} f'_{\omega}\big\Vert_{\infty}^{2}\big)\ll \big(\mathbb{E}\big\Vert T_{\sigma} f'_{\omega}\big\Vert_{\infty}\big)^{2}\ll \Big(\int_{3}^\infty \sigma x^{-\sigma-1} B(x) \sqrt{\pi_{0}(x)} \sqrt{\log\log x} \, dx\Big)^{2} ,\end{equation}
 whence
 \begin{equation} \label{eq:wh}  \mathbb{E}(X) \ll  \int_0^1 \sigma \Big(\int_{3}^\infty \sigma x^{-\sigma-1} B(x) \sqrt{\pi_{0}(x)} \sqrt{\log\log x} \, dx\Big)^{2} d\sigma. \end{equation}
 Setting for convenience $h(x):=B(x) \sqrt{\pi_{0}(x)} \sqrt{\log\log x}$ and using that  for $x,y>1$
 \[ \int_{0}^{1} \sigma^3 (xy)^{-\sigma}d\sigma\leq \int_{0}^{\infty} \sigma^3 (xy)^{-\sigma}d\sigma=\frac{6}{\log^{4}(xy)}, \]
 we find by Fubini's theorem that
 \begin{align*}  \int_0^1 \sigma^3 \Big(\int_{3}^\infty  x^{-\sigma-1} h(x) \, dx\Big)^{2} d\sigma &  \le 6 \int_{3}^{\infty}\int_{3}^{\infty} \frac{h(x) h(y)}{xy \log^4 (xy)} dxdy \\
&\le \frac{3}{4} \int_{3}^{\infty}\int_{3}^{\infty} \frac{h(x) h(y)}{(\log x \log y)^{3/2}} \frac{dxdy}{xy \log (xy)} \le \frac{3 \pi}{4} \int_{3}^{\infty} \frac{h(x)^2}{x\log^3 x} dx.
 \end{align*}
Here we used in the last step that 
 \[ \int_{1}^\infty \int_{1}^\infty\psi(x)\psi(y)\frac{dxdy}{xy (\log xy)}\leq \pi \int_{1}^\infty\psi^{2}(x) \frac{dx}{x}\]
 holds for a nonnegative function $\psi$, which we recognize as   
 Hilbert's inequality \cite[Thm. 316]{HLP}
 \[ \int_{0}^\infty \int_{0}^\infty\varphi(u)\varphi(v)\frac{dudv}{u+v}\leq \pi \int_{0}^\infty \varphi^{2}(u)du\]
for $\varphi(u):=\psi(e^{u})$, after the change of variables $u=\log x$, $v=\log y$.
  
  Hence, returning to \eqref{eq:wh}, we see that
 \begin{equation}\label{eq:change}  \mathbb{E}(X) \ll \int_3^{\infty} \frac{B^2(x) \pi_0(x) \log\log x}{x \log^3 x } dx. \end{equation}
 Now using the definition of $B^2(x)$ as a finite sum and changing the order of integration and summation, we observe that the right-hand side of \eqref{eq:change} equals the series in
 \eqref{eq:durendir}, and hence we conclude that $\mathbb{E}(X)<\infty$.
  \end{proof}


\section{Comparison of norms for Dirichlet polynomials}\label{sec:poly}\label{sec:compare}

We will now establish some relations between the various norms considered so far, when computed for Dirichlet polynomials of fixed length. Throughout this section, our Dirichlet polynomials will be denoted by $f$ and not $P$ as before. Our results complement the main result of \cite{DP} which shows that the supremum of the 
ratio $\| f \|_q/\| f \|_{q'}$ for nonzero Dirichlet polynomials $f$ of length $N$ is
\begin{equation} \label{eq:comp} \exp\left((1+o(1))\frac{\log N}{\log\log N}\log \sqrt{q/q'} \right)   \end{equation}
when $1\le q'<q < \infty$.


We begin with comparisons involving 
$\BMOA$ and $\mathfrak{B}$. For the purpose of  this discussion, it will be convenient to agree that
\[  \| f \|_{\BMOA}^2:= \sup_{h>0} \frac{1}{h} \sup_{t\in \mathbb{R}} \int_{0}^h \int_{t}^{t+h} |f'(\sigma+i \tau)|^2 \sigma d\tau d\sigma,\]
in accordance with the Carleson measure condition of Lemma~\ref{basaux}. We denote by $\mathcal{D}_N$ the space of Dirichlet polynomials of length $N$ vanishing at $+\infty$. The respective ratios $\| f \|_{\infty}/\|f\|_{\mathfrak{B}}$ and $\| f \|_{\BMOA}/\| f\|_{\mathfrak{B}}$ are quite modest compared to \eqref{eq:comp}:

\begin{theorem}\label{unus} 
When $N\to \infty$, we have 
\begin{align} \label{eq:asymp} \sup_{f\in \mathcal{D}_N\setminus \{0\}} \frac{\| f \|_{\infty}}{\| f \|_{\mathfrak{B}}} & \sim \log\log N, \\ \label{eq:asymp1}
\sup_{f\in \mathcal{D}_N\setminus \{0\}} \frac{\| f \|_{\BMOA}}{\| f \|_{\mathfrak{B}}} & \asymp \sqrt{\log\log N},\\
\label{eq:asymp2}\sup_{f\in \mathcal{D}_N\setminus\{0\}} \frac{\Vert f\Vert_\infty}{\Vert f\Vert_{BMOA}} & \asymp \log \log N .
\end{align}
\end{theorem}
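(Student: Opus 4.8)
The plan is to establish matching upper and lower bounds for each of the three suprema, the upper bounds resting on one common device and the lower bounds on explicit extremal families. Note first that \eqref{eq:asymp2} needs no separate upper bound: since $\BMOA\subset\mathfrak B$ gives $\|f\|_{\mathfrak B}\ll\|f\|_{\BMOA}$, the upper bound in \eqref{eq:asymp} immediately yields $\|f\|_\infty\ll\log\log N\,\|f\|_{\BMOA}$. The guiding principle for the upper bounds is that a polynomial $f\in\mathcal D_N$ has imaginary-axis boundary values $f_{\mathrm{bd}}(t):=f(it)=\sum_{2\le n\le N}a_n n^{-it}$ whose frequencies $\{-\log n\}$ lie in $[-\log N,0]$. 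This bandlimitedness lets me transfer estimates from the troublesome region near the boundary to a fixed height $\sigma_0\asymp 1/\log N$, where the pointwise Bloch bound \eqref{eq:point} of Lemma~\ref{vari} costs only $\log(1/\sigma_0)\asymp\log\log N$.

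For the upper bound in \eqref{eq:asymp} I would fix $\sigma_0:=(\log N\,\log\log N)^{-1}$ and prove the reverse smoothing estimate $\|f\|_\infty=\|f_{\mathrm{bd}}\|_\infty\le(1+o(1))\sup_t|f(\sigma_0+it)|$. Since $f$ is bounded and analytic in $\C_0$, Poisson's formula gives $f(\sigma_0+it)=(P_{\sigma_0}*f_{\mathrm{bd}})(t)$; evaluating at a point $t^\ast$ where $|f_{\mathrm{bd}}|$ nearly attains its supremum and writing $f(\sigma_0+it^\ast)-f_{\mathrm{bd}}(t^\ast)=\int P_{\sigma_0}(u)\bigl(f_{\mathrm{bd}}(t^\ast-u)-f_{\mathrm{bd}}(t^\ast)\bigr)\,du$, I bound the increment by Bernstein's inequality $\|f_{\mathrm{bd}}'\|_\infty\le(\log N)\|f_{\mathrm{bd}}\|_\infty$ (valid since the spectrum lies in $[-\log N,0]$) and split the $u$-integral at $|u|=1/\log N$. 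Concentration of the Poisson mass then makes the error $o(1)\|f_{\mathrm{bd}}\|_\infty$, precisely because $\sigma_0\log N=1/\log\log N\to0$. Combining with \eqref{eq:point} gives $\|f\|_\infty\le(1+o(1))(\log(1/\sigma_0)+O(1))\|f\|_{\mathfrak B}=(1+o(1))\log\log N\,\|f\|_{\mathfrak B}$. I expect this sharp-constant reverse estimate to be the main obstacle: the plain `$\asymp$' version only needs a bounded Fourier multiplier, whereas obtaining the leading constant $1$ forces the Bernstein-plus-Poisson computation above and the careful tuning of $\sigma_0$.

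For the upper bound in \eqref{eq:asymp1} I would use the Carleson-box form of $\|f\|_{\BMOA}^2$ from Lemma~\ref{basaux} and split each box integral at $\sigma_0:=1/\log N$ (the range $h>1$ being absorbed as in the proof of Lemma~\ref{basaux}). On $\sigma_0<\sigma<h$ the pointwise Bloch bound $|f'(\sigma+i\tau)|^2\sigma\le\|f\|_{\mathfrak B}^2/\sigma$ integrates to $\|f\|_{\mathfrak B}^2\log(1/\sigma_0)=\log\log N\,\|f\|_{\mathfrak B}^2$, the decisive contribution. On $0<\sigma<\sigma_0$ I cannot use Bloch directly; instead I transfer to height $\sigma_0$ by writing $f'(\sigma+i\cdot)=f'(\sigma_0+i\cdot)*k_\sigma$, where $\widehat{k_\sigma}(-\log n)=n^{\sigma_0-\sigma}\in[1,e]$, so a rescaling argument produces a kernel with $\|k_\sigma\|_{L^1(\RR)}\le C$ uniformly. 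Minkowski's integral inequality then yields $\int_t^{t+h}|f'(\sigma+i\tau)|^2d\tau\le C^2\sup_{t'}\int_{t'}^{t'+h}|f'(\sigma_0+i\tau)|^2d\tau$, and since $\int_0^{\sigma_0}\sigma\,d\sigma=\sigma_0^2/2$ while the height-$\sigma_0$ integral is $\le h\|f\|_{\mathfrak B}^2/\sigma_0^2$, this region contributes only $O(\|f\|_{\mathfrak B}^2)$. Hence $\|f\|_{\BMOA}^2\ll\log\log N\,\|f\|_{\mathfrak B}^2$.

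It remains to produce extremal families. For \eqref{eq:asymp} and \eqref{eq:asymp2} I would take $f_N(s):=\sum_{n=2}^N(n\log n)^{-1}n^{-s}$, so that $f_N'(s)=-\sum_{n=2}^N n^{-1-s}$ and $\sigma|f_N'(\sigma+it)|\le\sigma(\zeta(1+\sigma)-1)\le1$, giving $\|f_N\|_{\mathfrak B}\le1$. Because $\sum_{n\ge2}(n\log n)^{-1}n^{-s}=\sum_{n\ge2}(\log n)^{-1}n^{-s-1}$ is the $\BMOA$ series \eqref{eq:hilbert} and truncation only decreases the quantity $S$ of Theorem~\ref{thm:fefferman}(i) (the coefficients being nonnegative), I also get $\|f_N\|_{\BMOA}\ll1$; meanwhile $\|f_N\|_\infty\ge f_N(0)=\sum_{n=2}^N(n\log n)^{-1}\sim\log\log N$. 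This supplies the lower bounds in both \eqref{eq:asymp} (matching the sharp constant $1$) and \eqref{eq:asymp2}. For \eqref{eq:asymp1} the example must instead separate $\BMOA$ from $\mathfrak B$, so I would truncate the Bloch-but-not-$\BMOA$ series of Corollary~\ref{cor:bloch}: with $a_p=2^j e^{-2^j}$ for $p$ in the block $\mathbb P_j\subset[e^{2^j},e^{2^j+1}]$ and $p\le N$, condition \eqref{eq:Bloch} keeps $\|f_N\|_{\mathfrak B}\ll1$, while evaluating \eqref{eq:bmo} of Corollary~\ref{cor:prime} at $x=e$ aligns each block $\mathbb P_j$ with a single dyadic interval contributing $(\sum_{p\in\mathbb P_j}a_p)^2\asymp1$; summing over the $\asymp\log\log N$ blocks with $e^{2^j+1}\le N$ gives $\|f_N\|_{\BMOA}^2\gg\log\log N$, hence the ratio $\gg\sqrt{\log\log N}$.
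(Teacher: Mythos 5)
Your proposal is correct and follows essentially the same route as the paper: the upper bounds come from transferring boundary estimates to height $\asymp 1/\log N$ (your Poisson--Bernstein smoothing is the paper's Lemma~\ref{mardi}, and your convolution-kernel bound for $0<\sigma<1/\log N$ amounts to the second Bernstein inequality of Lemma~\ref{es}), combined with the pointwise Bloch bound \eqref{eq:point} and the same Carleson-box splitting, while the lower bounds for \eqref{eq:asymp} and \eqref{eq:asymp2} use the identical example $\sum_{2\le n\le N}(n\log n)^{-1}n^{-s}$. The only substantive deviation is your extremal family for \eqref{eq:asymp1}: the paper uses the simpler lacunary polynomial $\sum_{k\le\log\log N}[e^{e^k}]^{-s}$, whereas your truncated prime-block example from Corollary~\ref{cor:bloch} also works via Theorem~\ref{thm:fefferman}(i), at the cost of invoking the prime number theorem.
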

We require two new lemmas. The first contains two versions of Bernstein's inequality.
\begin{lemma}[Bernstein inequalities] \label{es}  We have 
\begin{equation} \label{eq:bern} \| f' \|_\infty \le \log N \| f \|_{\infty} \quad \text{and} \quad \Vert f'\Vert_\infty \leq 4\log N \Vert f\Vert_{\mathfrak{B}}\end{equation}
for every $f$ in $\mathcal{D}_N$.  
\end{lemma}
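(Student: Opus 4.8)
The plan is to reduce both inequalities to statements about boundary values on the imaginary axis and then to invoke the classical Bernstein inequality for functions of exponential type. Since $f\in\mathcal{D}_N$ is a bounded analytic function on $\mathbb{C}_0$ that tends to $0$ as $\Real s\to+\infty$, the Phragm\'en--Lindel\"of principle gives $\|f\|_\infty=\sup_{t\in\mathbb{R}}|f(it)|$, and likewise $\|f'\|_\infty=\sup_t|f'(it)|$, as $f'$ is again a Dirichlet polynomial supported on $n\le N$. Now I would observe that $z\mapsto f(iz)=\sum_{n\le N}a_n e^{-iz\log n}$ extends to an entire function of exponential type at most $\log N$ that is bounded on $\mathbb{R}$ with sup-norm $\|f\|_\infty$ (its spectrum lies in $[-\log N,\log N]$). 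The classical Bernstein inequality then yields $\|f'\|_\infty=\sup_t|\tfrac{d}{dt}f(it)|\le \log N\,\|f\|_\infty$, which is precisely the first inequality in \eqref{eq:bern}.

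For the second inequality, I would introduce $\phi(\sigma):=\sup_t|f'(\sigma+it)|$, which by the same Phragm\'en--Lindel\"of argument equals $\|T_\sigma f'\|_\infty$ and is therefore a nonincreasing function of $\sigma>0$ increasing to $\|f'\|_\infty$ as $\sigma\to0^+$. Two facts drive the argument. First, the Bloch condition gives the pointwise bound $\sigma\,\phi(\sigma)\le\|f\|_{\mathfrak{B}}$. Second, applying the first inequality of \eqref{eq:bern} to the length-$N$ Dirichlet polynomial $T_\sigma f'$, whose derivative is $T_\sigma f''$, gives $\sup_t|f''(\sigma+it)|\le\log N\,\phi(\sigma)$. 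Integrating $f''$ along horizontal segments, via $f'(\sigma_1+it)=f'(\sigma_2+it)-\int_{\sigma_1}^{\sigma_2}f''(\sigma+it)\,d\sigma$, and taking suprema in $t$ then produces the integral inequality $\phi(\sigma_1)\le\phi(\sigma_2)+\log N\int_{\sigma_1}^{\sigma_2}\phi(\sigma)\,d\sigma$ for $0<\sigma_1\le\sigma_2$.

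The final step is a Gr\"onwall argument balanced at the scale $\sigma_0:=1/\log N$. Fixing $\sigma_2=\sigma_0$ and setting $w(\sigma):=\phi(\sigma_0)+\log N\int_\sigma^{\sigma_0}\phi$, one has $\phi\le w$ and $w'=-\log N\,\phi\ge-\log N\,w$, so that $w(\sigma)e^{\sigma\log N}$ is nondecreasing; hence $w(\sigma)\le\phi(\sigma_0)e^{\sigma_0\log N}=e\,\phi(\sigma_0)$ for $0<\sigma\le\sigma_0$. Since $\phi(\sigma_0)\le\|f\|_{\mathfrak{B}}/\sigma_0=\log N\,\|f\|_{\mathfrak{B}}$, letting $\sigma\to0^+$ gives $\|f'\|_\infty\le e\log N\,\|f\|_{\mathfrak{B}}\le 4\log N\,\|f\|_{\mathfrak{B}}$.

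The only real obstacle is that the Bloch bound alone, namely $\phi(\sigma)\le\|f\|_{\mathfrak{B}}/\sigma$, is not integrable near $\sigma=0$, so one cannot simply integrate $f''$ down to the boundary. The remedy is to feed the polynomial (Bernstein) control of $f''$ back into a differential inequality for $\phi$; optimizing the crossover scale $\sigma_0$, equivalently minimizing $e^x/x$ at $x=1$, is exactly what produces the constant $e<4$.
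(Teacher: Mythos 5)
Your argument is correct, and for the first inequality it coincides with the paper's: both reduce to the classical Bernstein inequality for finite exponential sums of type at most $\log N$ (one harmless slip: the spectrum of $t\mapsto f(it)=\sum_{n\le N}a_ne^{-it\log n}$ lies in $[-\log N,0]$, not $[-\log N,\log N]$, but the type is still at most $\log N$). For the second inequality the governing idea is also the same as in the paper --- evaluate the Bloch bound $\sigma\sup_t|f'(\sigma+it)|\le\|f\|_{\mathfrak{B}}$ at the critical height $\sigma\asymp 1/\log N$ and then use the first Bernstein inequality to transport the resulting bound on $f'$ down to the boundary --- but your implementation of the transport step is different. The paper does it in a single stroke via Lemma~\ref{mardi}, i.e.\ the estimate $\|g\|_\infty\le(1-c)^{-1}\|T_{c/\log N}g\|_\infty$ applied to $g=f'$, and optimizing at $c=1/2$ produces the constant $4$. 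You instead iterate the same mechanism continuously: feeding $\sup_t|f''(\sigma+it)|\le\log N\,\phi(\sigma)$ into an integral inequality for $\phi(\sigma)=\sup_t|f'(\sigma+it)|$ and running Gr\"onwall gives $\phi(\sigma)\le e^{(\sigma_0-\sigma)\log N}\phi(\sigma_0)$ rather than the one-step $(1-c)^{-1}\phi(c/\log N)$, whence the marginally sharper constant $e$ in place of $4$. The extra ingredients you need --- that $\phi$ is continuous and nonincreasing with $\phi(\sigma)\to\|f'\|_\infty$ as $\sigma\to0^+$ --- do hold, since $f'$ is a Dirichlet polynomial and hence bounded, uniformly almost periodic on the closed half-plane; so the Gr\"onwall route is a valid, slightly more elaborate variant that buys a better constant the lemma does not actually need.
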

The first inequality in \eqref{eq:bern} is a special case of a general version of Bernstein's inequality for finite sums of purely imaginary exponentials (see \cite[p. 30]{KAH}). We will find that the second inequality is a consequence of the next lemma.
\begin{lemma}\label{mardi} We have  
\[ \| f \|_{\infty} \le \frac{1}{(1-c)} \| T_{c/\log N}f \|_{\infty} \]
for every Dirichlet polynomial $f$ in $\mathcal{D}_N$, when $0<c<1$ and $N\ge 2$.
\end{lemma}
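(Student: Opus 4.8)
The plan is to reduce the claimed inequality to a comparison of the sup-norms of $f$ on the two vertical lines $\Real s=0$ and $\Real s=c/\log N$, and then to transfer mass between these lines by combining the elementary (first) Bernstein inequality of Lemma~\ref{es} with the fundamental theorem of calculus in the $\sigma$-variable. Writing $\delta:=c/\log N$, I set $M(\sigma):=\|T_\sigma f\|_\infty=\sup_{t\in\RR}|f(\sigma+it)|$ for $\sigma\ge 0$. Since each $T_\sigma f$ is again a polynomial in $\mathcal{D}_N$ that is bounded and analytic in $\Real s>0$ and vanishes at $+\infty$, the maximum modulus principle (in its Phragm\'en--Lindel\"of form for the half-plane) shows that $\|T_\sigma f\|_\infty$ is attained on the boundary line $\Real s=\sigma$; hence $M$ is a well-defined, nonincreasing function of $\sigma$ with $\|f\|_\infty=M(0)$ and $\|T_\delta f\|_\infty=M(\delta)$.

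For the core estimate I would fix $t$ and integrate $\partial_\sigma f(\sigma+it)=f'(\sigma+it)$ from $\sigma=0$ to $\sigma=\delta$, obtaining $f(it)=f(\delta+it)-\int_0^\delta f'(\sigma+it)\,d\sigma$ and therefore $|f(it)|\le M(\delta)+\int_0^\delta |f'(\sigma+it)|\,d\sigma$. Applying the first inequality of Lemma~\ref{es} to $T_\sigma f\in\mathcal{D}_N$ gives $|f'(\sigma+it)|\le\|(T_\sigma f)'\|_\infty\le(\log N)\,M(\sigma)\le(\log N)\,M(0)$, where the last step uses monotonicity of $M$. Integrating in $\sigma$ produces the factor $(\log N)\delta=c$, so that $|f(it)|\le M(\delta)+c\,M(0)$ uniformly in $t$; taking the supremum over $t$ yields $M(0)\le M(\delta)+c\,M(0)$, and rearranging (legitimate since $c<1$) delivers exactly $\|f\|_\infty=M(0)\le(1-c)^{-1}M(\delta)=(1-c)^{-1}\|T_{c/\log N}f\|_\infty$.

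The items needing care, rather than a genuine obstacle, are the two invocations of the maximum principle: first to identify the half-plane sup-norms $M(\sigma)$ with boundary suprema (and hence to see that $M$ is monotone), and second to pass from the boundary Bernstein bound, which is literally a statement about the exponential sum $f(\sigma+it)=\sum_n a_n n^{-\sigma}e^{-it\log n}$ in the real variable $t$, to the pointwise bound on $|f'(\sigma+it)|$ used inside the integral. One should also stress that we invoke \emph{only} the first inequality in Lemma~\ref{es}; this is essential, since the second inequality there is to be derived \emph{from} the present lemma, so using it would be circular.
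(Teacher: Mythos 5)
Your proof is correct and follows essentially the same route as the paper: the paper likewise writes $|f(it)-f(\sigma+it)|\le \sigma\|f'\|_\infty\le \sigma\log N\,\|f\|_\infty$ using the first Bernstein inequality together with the maximum modulus principle, sets $\sigma=c/\log N$, and rearranges. Your extra care about monotonicity of $M(\sigma)$ and about avoiding the second Bernstein inequality (which would indeed be circular) is sound but does not change the argument.
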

\begin{proof}
The first inequality in \eqref{eq:bern} and the maximum modulus principle give for any fixed $\sigma>0$
\[ |f(it)-f(\sigma+it)|\leq \sigma \| f'\Vert_{\infty}\leq \sigma \log N\Vert f\Vert_{\infty}. \] 
Hence, setting $\sigma=c/\log N$, we see that
\[ |f(it)|\leq \big|\big(T_{c/\log N}f\big)(it)\big|+c  \| f \|_{\infty} \]
from which the result follows. 
\end{proof}

\begin{proof}[Proof of the second inequality in \eqref{eq:bern}]
Using the definition of the Bloch norm, we see for any fixed $\sigma>0$ that 
\[ \| f\|_{\mathfrak{B}}\geq \sup_{t\in \mathbb{R}} \sigma |f'(\sigma+it)|. \]
Setting $\sigma=c/\log N$ and applying Lemma~\ref{mardi} to $f'$, we then get
\[ \| f\|_{\mathfrak{B}} \ge \frac{c(1-c)}{\log N} \| f' \|_{\infty} .\] Choosing $c=1/2$, we obtain the asserted result.
\end{proof}

\begin{proof}[Proof of Theorem~\ref{unus}]
Combining \eqref{eq:point} and Lemma~\ref{mardi}, we find that if $f(+\infty)=0$, then
\begin{equation} \label{eq:bloch} \|f\|_{\infty} \le \frac{\log\log N + \log (1/c) + C}{(1-c)} \| f \|_{\mathfrak{B}}. \end{equation}
Choosing $c=1/\log\log N$, we obtain
\[ \frac{\|f\|_{\infty}}{\| f\|_{\mathfrak{B}}}\le \log\log N +O(\log\log\log N), \]
assuming that $f\neq 0$.  On the other hand, the polynomial $f(s)=\sum_{n=2}^N \frac{1}{n\log n} n^{-s}$ satisfies $\| f \|_{\infty} = \log\log N + O(1)$, while 
\[ |f'(s)| \le \sum_{n=2}^{\infty} n^{-\sigma-1}\leq \zeta(\sigma+1)-1\leq \frac{1}{\sigma}, \]
so that $\| f \|_{\mathfrak{B}} \le 1$. Hence we have shown that the supremum over $f$ of  the left-hand side of \eqref{eq:bloch} exceeds $\log\log N+O(1)$. We conclude that \eqref{eq:asymp} holds.


We now use Lemma \ref{basaux} to estimate $\| f \|_{\BMOA}$ under the assumption that $f$ is in $\mathcal{D}_N$ and $\| f \|_{\mathfrak{B}}=1$. We first observe that if $h\le 1/\log N$, then by the second Bernstein inequality of Lemma~\ref{es},
\[ \int_{0}^h \int_{t}^{t+h} |f'(\sigma+i \tau)|^2 \sigma d\tau d\sigma \le 16 (\log N)^2 h \int_{0}^h \sigma d\sigma \le 8h.\]
On the other hand, if $1/\log N<h \le 1$, then we obtain by the same argument
\[ \int_{0}^h \int_{t}^{t+h} |f'(\sigma+i \tau)|^2 \sigma d\tau d\sigma \le 8h + \int_{1/\log N}^h \int_{t}^{t+h} |f'(\sigma+i \tau)|^2 \sigma d\tau d\sigma .\]
Using the bound $|f'(\sigma+i \tau)| \le 1/\sigma$ in the integral term, where $1/\log N \le \sigma \le h\le1$,  we infer from this that
\[ \| f \|_{\BMOA}^2 \le  \log\log N+O(1). \] 
The optimality of the latter bound is seen by considering  the function
\[ g(s):=\sum_{k\le \log\log N}  \left[e^{e^k}\right]^{-s} , \]
that satisfies $\| g \|_{\mathfrak{B}} \asymp 1$ and $\| g \|_{\BMOA}^2 \asymp \log \log N$. Here the first relation is trivial, and the second follows from \eqref{eq:feff} of Theorem~\ref{thm:fefferman}. Hence \eqref{eq:asymp1} has been established.

Finally, to prove  \eqref{eq:asymp2}, we first infer from \eqref{eq:asymp} that
\[ \Vert f\Vert_\infty\ll  \log\log N\Vert f\Vert_{\mathcal{B}}\ll  \log\log N \Vert f\Vert_{BMOA}. \]
The example 
$f(s)=\sum_{2\leq n\leq N} \frac{1}{n\log n} n^{-s}$ used above satisfies $\Vert f\Vert_{BMOA}\asymp 1$ by \eqref{eq:hilbert}  and trivially $\Vert f\Vert_{\infty}\asymp \log\log N$. This establishes the reverse inequality in \eqref{eq:asymp2}.
\end{proof}

We close this section by establishing a lemma that will be used in two different ways in the next section. In contrast to the preceding comparison results, as well as those of \cite{DP}, Lemma~\ref{lem:finite} is a purely multiplicative result, and we therefore state it for polynomials in several complex variables.

\begin{lemma}\label{lem:finite}
There exists an absolute constant $C$ such that if $F$ is a holomorphic polynomial of degree $d\ge 2$ in $n\ge 1$ complex variables, then
\begin{equation} \label{eq:sub} \| F \|_{\infty} \le C \| F\|_{n \log d}. \end{equation}
\end{lemma}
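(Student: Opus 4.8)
The goal is a norm comparison $\| F \|_{\infty} \le C \| F \|_{n \log d}$ for a polynomial $F$ of degree $d$ in $n$ variables on $\T^n$, where the right-hand side is the $L^{n\log d}$ norm. The natural route is a hypercontractivity/Nikolskii-type inequality that boosts a low $L^p$ norm to $L^\infty$ at the cost of a factor depending on the degree and the number of active frequencies. The plan is to relate the sup norm to an $L^p$ norm via a direct comparison of $L^\infty$ and $L^q$ norms for polynomials of bounded degree, choosing $q \asymp n\log d$ so that the comparison constant stays bounded.

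\textbf{Key steps.} First I would recall the classical fact that for a holomorphic polynomial $F$ of degree $d$ in $n$ variables, one has a reverse H\"older (Nikolskii) inequality of the form $\| F \|_\infty \le K^{?}\| F \|_q$ where the constant grows only polynomially in the number of monomials or, better, behaves like $e^{O(nd/q)}$ when one exploits the degree structure in each variable. The cleanest mechanism is to integrate: writing $\| F \|_\infty \le \| F \|_q \cdot (\text{volume/counting factor})^{1/q}$ is too crude, so instead I would use the sharper route through the one-variable Bernstein-type growth together with a tensorised estimate. Concretely, I expect to bound $\| F \|_\infty / \| F \|_q$ by controlling how much a polynomial of coordinate-degree at most $d$ can concentrate, giving a factor of order $(1+d)^{n/q}$ up to constants. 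Taking $q = n\log d$ then yields $(1+d)^{n/q} = (1+d)^{1/\log d} = e^{(\log(1+d))/\log d} \le e^{1+o(1)}$, which is bounded by an absolute constant $C$. This is exactly why the threshold $q = n\log d$ is the right choice: it is the smallest exponent for which the degree-counting factor stays $O(1)$.

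\textbf{Main obstacle.} The delicate point is obtaining the Nikolskii factor with the correct exponential dependence $e^{O(n\log d / q)}$ rather than a factor scaling like the full number of monomials $(d+1)^n$, whose $1/q$-th power would \emph{not} be bounded at $q = n\log d$. I would get around this by not treating all $(d+1)^n$ monomials democratically: instead I would induct on the number of variables, using at each step a one-variable comparison of the sup norm with the $L^q$ norm for a polynomial of degree $d$ in a single variable, which contributes a factor of order $d^{1/q}$ per variable, hence $d^{n/q}$ after $n$ steps. The one-variable input is the standard estimate that for a trigonometric/analytic polynomial $p$ of degree $d$ one has $\| p \|_\infty \le C d^{1/q}\| p \|_q$ (a Nikolskii inequality), whose constant can be taken uniform in $q \ge 1$. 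Assembling these, the product of per-variable factors is $d^{n/q}$, and the choice $q = n\log d$ collapses it to a constant, completing the proof.
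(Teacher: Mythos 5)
Your overall strategy is the right one — a reverse H\"older (Nikolskii) inequality with degree factor $d^{n/q}$, collapsed to $e$ by the choice $q=n\log d$ — and this is indeed the mechanism behind the paper's proof. But the execution via induction on the variables has a genuine gap: the one-variable inequality you invoke has the form $\|p\|_\infty\le C\,d^{1/q}\|p\|_q$ with $C>1$ an absolute constant, and iterating it over the $n$ coordinates produces $C^n d^{n/q}$, not $d^{n/q}$. The choice $q=n\log d$ tames only the degree factor ($d^{n/q}=e$); the accumulated constant $C^n$ is untouched and unbounded in $n$, so the argument as written does not yield an absolute constant in \eqref{eq:sub}. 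This loss is not an artifact of sloppiness: any one-variable-at-a-time scheme (e.g.\ the kernel method $F=F*K^{\otimes n}$ with $\|F\|_\infty\le\|K\|_{q'}^n\|F\|_q$) pays a factor $\|K\|_1>1$ per variable, since no nonnegative $L^1$ kernel reproduces all frequencies $0,\dots,d$ with $\hat K(0)=1$. There is also a secondary issue you would have to address even to set up the induction: after taking the $L^q$ norm in one variable, the resulting function of the remaining variables is no longer a polynomial, so the inductive hypothesis does not apply to it directly.

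The paper sidesteps all of this by performing the localization in all variables \emph{simultaneously}: the multivariate Bernstein inequality $|F(z)-F(w)|\le\frac{\pi}{2}d\,\|z-w\|_\infty\|F\|_\infty$ on $\T^n$ shows that if $|F(w)|=\|F\|_\infty$, then $|F(z)|\ge\|F\|_\infty/2$ on a product of arcs of normalized measure $\gtrsim(c/d)^n$. Hence $\|F\|_q\ge\frac12(c/d)^{n/q}\|F\|_\infty$, where the factor $\frac12$ occurs exactly once rather than once per variable, and $q=n\log d$ then gives $\|F\|_\infty\le 2e\,c^{-1/\log d}\|F\|_q\le C\|F\|_q$ with $C$ absolute. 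If you replace your inductive step by this single application of the multivariate Bernstein inequality (which is the standard polydisc estimate, see \cite[pp. 125--126]{MAHE}), your argument becomes correct and coincides with the paper's.
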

\begin{proof}    
We now apply a multi-dimensional version of Bernstein's inequality, namely 
\[ |F(z)-F(w)|\leq \frac{\pi}{2}  d \Vert z-w\Vert_\infty \Vert F\Vert_\infty ,\]
which holds for holomorphic polynomials $F$ in $n$ complex variables and all points $z=(z_j)$  and  $ w=(w_j)$ on  $\T^n$  (see \cite[pp. 125--126]{MAHE}). This implies that if $w$ is a point on $\T^n$ at which $|F(w)|=\| F\|_\infty $, then  $|F(z)|\ge \| F \|_\infty/2$ whenever we have
$|w_j-z_j|\le \frac{c}{d} $ for $j\le \pi_0(n)$ with  $c:=1/\pi$. It follows that 
\[ \| F \|_q\ge \frac{1}{2} (2c)^{n/q}  d^{-n/q} \| F\|_\infty  \] 
and hence we get
\[  \| F\|_\infty \le 2e (2c)^{-1/\log d}  \| F\|_{n \log d} \le 2 \pi^{1/\log 2}  \| F\|_{n \log d}. \] 
\end{proof}

\section{The partial sum operator for Dirichlet series and Riesz projection on $\T$}\label{sec:partial}

We will now make some remarks about the partial sum operator $S_N$ which is defined by the formula
\[ (S_N f)(s):=\sum_{n\le N} a_n n^{-s} \]
for $f(s)=\sum_{n=1}^{\infty} a_n n^{-s}$. We are interested in computing the norm of $S_N$ when it acts on $\mathcal{H}^q$. In what follows, we denote this norm by $\| S_N \|_q$. Most of what is known about $\|S_N\|_q$ for different values of $q$ and $N$ can be deduced from an idea that goes back to Helson \cite{He}, by which we may effectively rewrite $S_N$ as a one-dimensional Riesz projection. We will now state and prove a theorem in this vein that can be obtained almost immediately by combining \cite[Thm. 8.7.2]{R} with the optimal bounds of Hollenbeck and Verbitsky \cite{HV} for Riesz projection on $\T$. We choose to offer a detailed proof, however, because it makes the transference to one-dimensional Riesz projection explicit and leads to nontrivial quantitative estimates.

We will consider a somewhat more general situation to emphasize the main idea of the transference to the unit circle. To this end, we fix a completely multiplicative function $g(n)\ge 1$ such that $g(n)\to \infty$ when $n\to \infty$. By considering $g(p^k)$ for $k\geq 1$, we see that this means that
$g(p)>1$  for all primes $p$ and that $\lim_{p\to\infty} g(p)=\infty.$ We then introduce the projection
\[ P_{g,x} \left(\sum_{n=1}^\infty a_n n^{-s}\right) := \sum_{g(n)\le x} a_n n^{-s}. \]
We see that $S_N=P_{g,N}$ in the special case when $g(n)=n$. 

 
\begin{theorem}\label{show}  Suppose that $g$ is a completely multiplicative function taking only positive values and  that $g(n)\to\infty$ when $n\to\infty$. Then
\begin{equation}\label{key}   \sup_{x\ge 1} \| P_{g,x} \|_{\mathcal{H}^q}= \frac{1}{\sin(\pi/q)} \end{equation}
for $1<q<\infty$. 
\end{theorem}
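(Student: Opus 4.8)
The plan is to establish the equality \eqref{key} by proving the two inequalities separately, transferring each to the one-dimensional Riesz projection $P_1^+$ on $\T$, whose $L^q$-norm is $(\sin(\pi/q))^{-1}$ by Hollenbeck and Verbitsky \cite{HV}. For the upper bound I would show $\|P_{g,x}\|_{\mathcal{H}^q}\le(\sin(\pi/q))^{-1}$ for every fixed $x$, whereas for the lower bound I would exhibit a sequence of functions and thresholds along which the ratio tends to $(\sin(\pi/q))^{-1}$. Since the latter value is only approached and not attained, the result is a genuine supremum.

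For the upper bound, first note that since $g(n)\to\infty$, only finitely many $n$ satisfy $g(n)\le x$, so $P_{g,x}$ has finite rank and it suffices to test it on Dirichlet polynomials $f$. Writing $\beta_j:=\log g(p_j)>0$ and using the Bohr lift $\mathcal{B}$, the condition $g(n)\le x$ becomes $\langle\beta,\alpha\rangle\le\log x$ for the multi-index $\alpha=\kappa(n)$. If $f$ involves only the primes $p_1,\dots,p_d$, I would choose positive integers $m_1,\dots,m_d$ and an integer $M$ so that, on the finite support $\mathcal{A}$ of $\mathcal{B}f$, the linear functional $\alpha\mapsto\langle m,\alpha\rangle$ is injective and order-matches $\langle\beta,\alpha\rangle$ against the threshold, i.e. $\langle\beta,\alpha\rangle\le\log x\iff\langle m,\alpha\rangle\le M$ for $\alpha\in\mathcal{A}$; taking the $m_j$ proportional to $\beta_j$ up to rounding works for a large enough scaling precisely because $\mathcal{A}$ is finite. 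The substitution $w\mapsto(w^{m_1},\dots,w^{m_d})$, which is an $L^q$-isometry by the argument of Lemma~\ref{equal_norms}, then turns $\mathcal{B}f$ into an analytic polynomial $G(w)=\sum_{\alpha}\hat f(\alpha)w^{\langle m,\alpha\rangle}\in H^q(\T)$ with $\|G\|_{L^q(\T)}=\|f\|_{\mathcal{H}^q}$, and turns $P_{g,x}f$ into the Taylor truncation $T_MG:=\sum_{k\le M}\widehat G(k)w^k$. Since $G$ is analytic, $T_M$ is the conjugate of the co-analytic Riesz projection after multiplication by the unimodular factor $w^{-M}$, so $\|T_M\|_{H^q(\T)\to H^q(\T)}\le\|P_1^+\|_{L^q(\T)}=(\sin(\pi/q))^{-1}$, which gives $\|P_{g,x}f\|_{\mathcal{H}^q}\le(\sin(\pi/q))^{-1}\|f\|_{\mathcal{H}^q}$.

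For the lower bound I would restrict attention to a single prime $p$: for $f(s)=\sum_{k\ge0}a_kp^{-ks}$ the Bohr lift depends on one variable only, so $\|f\|_{\mathcal{H}^q}$ equals the $H^q(\T)$-norm of $\sum_k a_k w^k$, and $P_{g,x}$ restricted to this subspace is exactly the Taylor truncation $T_K$ with $K=\lfloor\log x/\log g(p)\rfloor$, which ranges over all of $\Z_+$ as $x$ varies. Thus $\|P_{g,x}\|_{\mathcal{H}^q}\ge\|T_K\|_{H^q(\T)}$, and it remains to check that $\sup_K\|T_K\|_{H^q(\T)}=(\sin(\pi/q))^{-1}$. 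For the inequality ``$\ge$'' I would take a trigonometric polynomial $\phi=\sum_{k=-K}^{K}c_kw^k$ of $L^q$-norm one that nearly extremizes the co-analytic projection $\tilde P$ keeping frequencies $\le0$, whose norm equals $(\sin(\pi/q))^{-1}$ by conjugation symmetry and the sharpness of \cite{HV}; shifting by $w^K$ makes $\psi:=w^K\phi\in H^q$ of the same norm, and a direct computation shows $T_K\psi=w^K\tilde P\phi$, so $\|T_K\psi\|_q=\|\tilde P\phi\|_q$ lies within $\varepsilon$ of $(\sin(\pi/q))^{-1}\|\psi\|_q$. Combined with the upper bound, this yields the claimed equality.

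The main obstacle is the cutoff-matching in the upper bound: one must simultaneously make the integer functional $\langle m,\cdot\rangle$ injective on the support of $\mathcal{B}f$, which is needed for the $L^q$-isometry, and have it separate the truncated from the non-truncated indices in exactly the same way as the real functional $\langle\beta,\cdot\rangle$ relative to $\log x$. Both are open, generic conditions that can be met because the support is finite, but arranging them together requires some care; alternatively this transference can be packaged abstractly using \cite[Thm.~8.7.2]{R}. The remaining ingredients---the isometry of Lemma~\ref{equal_norms}, the Hollenbeck--Verbitsky norm, and the elementary identity relating Taylor truncation to Riesz projection---are then routine.
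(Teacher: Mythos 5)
Your lower bound is correct and is essentially the paper's own argument: restricting to Dirichlet series in the powers of a single prime turns $P_{g,x}$ into the one-dimensional Taylor truncation $T_K$ with $K$ running over all of $\Z_+$, and the shift trick $\psi:=w^K\phi$ applied to near-extremal trigonometric polynomials for the co-analytic projection recovers the sharp Hollenbeck--Verbitsky constant in the limit.

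The upper bound, however, contains a genuine gap: the substitution $w\mapsto(w^{m_1},\dots,w^{m_d})$ that collapses $d$ torus variables into one is \emph{not} an $L^q$-isometry, and Lemma~\ref{equal_norms} does not assert that it is. In that lemma the original variables are retained and new ones are appended, so that for each fixed value of the new variables the map is a rotation of the old ones; your map instead restricts $\mathcal{B}f$ to the one-dimensional closed subgroup $\{(w^{m_1},\dots,w^{m_d}):w\in\T\}$ of $\T^d$ and averages over that curve rather than over $\T^d$. Injectivity of $\alpha\mapsto\langle m,\alpha\rangle$ on the support gives the isometry only for $q=2$. Already for $q=4$ it fails: take $F(z)=z_1+z_2+z_1z_2$ and $m=(1,2)$, which is injective on the support; then $\|F\|_{L^4(\T^2)}^4=15$ while $\|w+w^2+w^3\|_{L^4(\T)}^4=19$, because the sumset elements $(2,1)$ and $(0,2)$ collide under $\langle m,\cdot\rangle$. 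For non-even $q$ no choice of integers $m_j$ can repair this, since the curve is periodic and cannot equidistribute in $\T^d$. The paper circumvents exactly this difficulty by keeping all the original variables and adjoining one \emph{extra} variable $z$, i.e.\ mapping $a_nw^{\alpha(n)}\mapsto a_nw^{\alpha(n)}z^{\beta(n)}$ with $\beta(n)=\sum_p\alpha_p(n)m_p$ --- a true isometry by the Lemma~\ref{equal_norms} mechanism --- and then applying the one-dimensional Riesz bound in the $z$-variable for each fixed $w$ before integrating in $w$; the cutoff-matching you worry about is handled there via $|\beta(n)-Q\log g(n)|\le\tfrac12\Omega(n)$ and $\log g(n)\ge c\,\Omega(n)$. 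So the step that needs repair is the isometry, not the cutoff, and the fix is precisely this auxiliary-variable device (or the abstract transference of \cite[Thm.~8.7.2]{R} that you mention only in passing).
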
 
\begin{proof} 
We consider first the easy direction, namely that $\sup_{x\geq 1} \| P_{g,x} \|_q\geq \frac{1}{\sin(\pi/q)}$. It is classical and straightforward to check that the norm of the Riesz projection equals $\sup_{N\geq 1} \| \widetilde S_N \|_q,$ where $\widetilde S_N$ is the 1-dimensional partial sum operator acting on $H^q(\T)$. On the other hand, clearly $\| P_{g,g(2^N)} \|_q\ge \| \widetilde S_{N} \|_q$, so the claim follows from the fact that the bound of Hollenbeck and Verbitsky  is optimal.

In order to treat the more interesting direction, we begin by fixing a positive integer $Q$ that will be specified later, depending on $x$. Then for every prime $p$, we choose a positive integer $m_p$ such that 
\[ \left| Q \log g(p) -m_p\right| \le \frac{1}{2}. \]
This is possible because $g(p)>1$ by the assumption that $g(n)\to \infty$. Now let $z$ be a point on the unit circle. Write $n$ in multi-index notation as $n=p^{\alpha(n)}=\prod_{p} p^{\alpha_p(n)}$, set accordingly $\beta(n)=\sum_{p} \alpha_{p}(n) m_p$ and consider the transformation
\[ T_{g,Q,z} \left(\sum_{n=1}^\infty a_n n^{-s}\right)=\sum_{n=1}^\infty a_n z^{\beta(n)} n^{-s}. \]
Taking the Bohr lift, we see that the effect of $T_{g,Q,z}$ acting on $f$ is that each variable is multiplied by a unimodular number. This shows that $T_{g,Q,z}$ acts isometrically on $\mathscr{H}^q$ for every $q>0$.

Note that by construction
\[ \left| \beta(n) -  Q \log g(n) \right|\le \frac{1}{2}\big|\alpha(n)\big|=\frac{1}{2}\Omega(n), \]
where $\Omega(n)$ is the number of prime factors of $n$ counting with multiplicity.
We now choose the parameter $Q$ so large that 
\begin{equation}\label{eq:req} 
\max_{g(n)\leq x }\beta(n)< \inf_{g(n)>x}\beta(n). 
\end{equation} 
This is obtained if
\begin{equation} \label{eq:sep} \inf_{g(n)>x} \big(Q\log g(n)- \frac{1}{2} \Omega(n) \big) > \max_{g(n)\le x} \big(Q \log g(n) + \frac{1}{2} \Omega(n) \big). \end{equation}
We may achieve \eqref{eq:sep} because the assumptions on $g$ 
 imply that $\log g(n) \ge c \Omega(n) $ for some $c>0$. Namely, this inequality clearly yields that
 \[ \inf_{g(n)>x} \big(Q\log g(n)- \frac{1}{2} \Omega(n) \big)\ge (Q-c^{-1}/2) \log (x+1) \]
for some $\varepsilon>0$, while on the other hand
\[    \max_{g(n)\le x} \big(Q \log g(n) + \frac{1}{2} \Omega(n) \big) \le (Q+c^{-1}/2) \log x.\]
Having made this choice of $Q$, we see that \eqref{eq:req} ensures that we may write 
\[ (T_{g,Q,z} P_{g,x} f)(s)=\sum_{\beta(n)\le x'} a_n z^{\beta(n)} n^{-s}\]
for a suitable $x'$.
Hence,
using the Bohr lift $B$,  the translation invariance of  $m_\infty$ under $T_z$ with $ T_{z}(w)=(w_p z^{m_p})$, Fubini's theorem, and Hollenbeck and Verbitsky's theorem \cite{HV} on the $L^q$ norm of the Riesz projection on $\T$, we get successively:
\begin{align*}
\Vert S_{N}(f)\Vert_{q}^{q}& =\int_{\T^\infty} \big|S_{N}(Bf)(w)\big|^{q}dm_{\infty}(w)\\
&=\int_{\T}\Big(\int_{\T^\infty} \big|S_{N}(Bf)(T_{z} w)\big|^{q}dm_{\infty}(w)\Big)dm(z)\\
&=\int_{\ T^\infty}\Big(\int_{\T} \big|\sum_{n=1}^N a_n w^{\alpha(n)}z^{\beta(n)}
\big|^{q}dm(z)\Big)dm_{\infty}(w)\\
&\leq \Big(\frac{1}{\sin \pi/q}\Big)^q \int_{\T^\infty}\Big(\int_{\T} \big|\sum_{n=1}^\infty a_n w^{\alpha(n)}z^{\beta(n)}\big|^{q}dm(z)\Big)dm_{\infty}(w)\\
&= \Big(\frac{1}{\sin \pi/q}\Big)^q \Vert f\Vert_{q}^{q}.\end{align*}
\end{proof}

If we specialize to the case when $g(n)=n$  and $x= N$, it is of interest to see how large the intermediate parameter $Q$ has to be to ensure that \eqref{eq:sep} holds.
We see that 
this  happens if
\begin{equation} \label{eq:special} Q\log(N+j)-\frac{\log (N+j)}{2\log 2}>Q\log N+\frac{\log N}{2\log 2}.\end{equation}
when $j=1,2 ...$. We may assume that $Q>1/(2\log 2)$ so that
\[ Q\log(N+j)-\frac{\log (N+j)}{2\log 2} \ge Q \log N - \frac{\log N}{2\log 2} + \Big(Q- \frac{1}{2\log 2}\Big) \frac{1}{2N}. \]
This shows that \eqref{eq:special} holds if we choose 
\begin{equation} \label{eq:require} Q\ge c N \log N \end{equation}
   with $c>0$ large enough. Since $T_{n,Q} S_N f$ will be a polynomial of degree at most $Q \log N+(\log N)(2\log 2)$ in the dummy variable $z$, we may now, following again the reasoning of the above proof, use Lemma~\ref{lem:finite} with $n=1$ and $d= Q \log N+O(\log N)$ to deduce that $\| S_N \|_{\infty} \ll \log N $. We thus recapture a result that was first established in \cite{BCQ} by use of Perron's formula and contour integration.

The bound just obtained remains the best known upper bound for $\| S_N\|_{\infty} $. On the other hand, it is known that $\| S_N \|_{\infty}\gg \log \log N$ (obtained for Dirichlet series over powers of a single prime). We are thus far from knowing the right order of magnitude of $\| S_N\|_{\infty}$. A similar situation persists when $q=1$ in which case we have $\log\log N \ll \| S_N\|_1 \ll \log N/\log\log N$ by a result of \cite{BBSS}.

We will now show that if we specialize to Dirichlet series over $\mathcal{P}_0$-smooth numbers, then the estimate in the case $q=\infty$ can be improved for certain ultra-thin sets of primes $\mathcal{P}_0$. To this end, we denote by $\mathcal{H}^q(\mathcal{P}_0)$ the subspace of $\mathcal{H}^{q}$ consisting of Dirichlet series over the sequence $\mathcal{N}_0$ of $\mathcal{P}_0$-smooth numbers, and we let $\| S_{N}\|_{\mathcal{H}^q(\mathcal{P})_0}$ be the norm of $S_N$ when restricted to $\mathcal{H}^q(\mathcal{P}_0)$. 

The crucial observation is that it may now be profitable to apply Lemma~\ref{lem:finite} \emph{before} we make the transference to one-dimensional Riesz projection. Indeed, we observe that the Bohr lift of a Dirichlet polynomial of length $N$ over $\mathcal{P}_0$-smooth numbers will be a polynomial of degree at most $\log N/\log 2$ in $\pi_0(N)$ complex variables. Hence the norm on the right-hand side of \eqref{eq:sub} can be taken to be the $\pi_0(N) \log \log N$-norm. Combining this observation with Theorem~\ref{show}, we then get the following result which yields an improvement when $\pi_0(x)=o(\log x/\log\log x)$.

\begin{theorem}\label{thm:reis} There exists an absolute constant $C$ such that 
 \begin{equation}\label{d}\Vert S_N\Vert_{\mathcal{H}^{\infty}(\mathcal{P}_0)} \leq C \pi_0(N) \log\log N  \end{equation}
 when $\pi_0(N)\ge 1$ and $\log\log N\ge 2$.
\end{theorem}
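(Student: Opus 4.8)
The plan is to follow the crucial observation highlighted just before the statement: apply Lemma~\ref{lem:finite} to the Bohr lift of $S_N f$ \emph{before} transferring to one-dimensional Riesz projection, and only then invoke Theorem~\ref{show}. Fix $f$ in $\mathcal{H}^{\infty}(\mathcal{P}_0)$ with $\|f\|_{\mathcal{H}^\infty}\le 1$, and let $F:=\mathcal{B}(S_N f)$ be the Bohr lift of $S_N f$. Since the prime factors of any $\mathcal{P}_0$-smooth integer $m\le N$ lie among the $\pi_0(N)$ primes of $\mathcal{P}_0$ not exceeding $N$, while $2^{\Omega(m)}\le m\le N$ forces $\Omega(m)\le \log N/\log 2$, the polynomial $F$ is in $n:=\pi_0(N)$ variables and of degree at most $d:=\log N/\log 2$. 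The hypothesis $\log\log N\ge 2$ gives $\log N\ge e^2$, hence $d\ge e^2/\log 2>2$, so Lemma~\ref{lem:finite} applies with this $d$ and yields
\[ \|S_N f\|_{\mathcal{H}^\infty}=\|F\|_{\infty}\le C\,\|F\|_{n\log d}=C\,\|S_N f\|_{\mathcal{H}^{q}},\qquad q:=\pi_0(N)\log d. \]

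Next I would control the $\mathcal{H}^q$ norm on the right by Theorem~\ref{show}. Taking $g(n)=n$ and $x=N$ there, so that $P_{g,x}=S_N$, and noting that restricting to the subspace $\mathcal{H}^q(\mathcal{P}_0)$ only decreases the operator norm, formula \eqref{key} gives
\[ \|S_N f\|_{\mathcal{H}^q}\le \frac{1}{\sin(\pi/q)}\,\|f\|_{\mathcal{H}^q}\le \frac{1}{\sin(\pi/q)}\,\|f\|_{\mathcal{H}^\infty}\le \frac{1}{\sin(\pi/q)}, \]
where the middle inequality uses $\|f\|_{\mathcal{H}^q}\le\|f\|_{\mathcal{H}^\infty}$, valid because $m_\infty$ is a probability measure.

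It then remains to unwind the two constants. Since $\pi_0(N)\ge 1$ and $\log\log N\ge 2$, we have $q=\pi_0(N)\log d\ge \log(e^2/\log 2)>2$, so Jordan's inequality $\sin x\ge (2/\pi)x$ on $[0,\pi/2]$, applied at $x=\pi/q$, gives $\sin(\pi/q)\ge 2/q$, i.e. $1/\sin(\pi/q)\le q/2$. Combining the two displays,
\[ \|S_N f\|_{\mathcal{H}^\infty}\le \frac{C}{2}\,q=\frac{C}{2}\,\pi_0(N)\log d=\frac{C}{2}\,\pi_0(N)\,\log\!\Big(\frac{\log N}{\log 2}\Big), \]
and since $\log(\log N/\log 2)=\log\log N-\log\log 2\le 2\log\log N$ once $\log\log N\ge 2$, the asserted bound \eqref{d} follows with an absolute constant.

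I do not expect a genuine obstacle here: the argument is essentially a bookkeeping combination of the two established results. The only real content is the observation — already flagged in the text — that carrying out the estimate of Lemma~\ref{lem:finite} in the $\pi_0(N)$ original variables produces the exponent $q\asymp \pi_0(N)\log\log N$, which is small precisely when the prime set is thin. The point requiring the most care is the order of operations: one must apply Lemma~\ref{lem:finite} \emph{first}, so that the relevant number of variables is $\pi_0(N)$ rather than the single variable one would have after transference, and then use that the Riesz-projection constant $1/\sin(\pi/q)$ grows only linearly in $q$, so that the final bound is linear in $\pi_0(N)\log\log N$ rather than exponential.
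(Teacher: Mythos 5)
Your proof is correct and follows exactly the route the paper sketches: apply Lemma~\ref{lem:finite} to the Bohr lift of $S_Nf$ (a polynomial of degree at most $\log N/\log 2$ in $\pi_0(N)$ variables) to pass from the $\mathcal{H}^\infty$ norm to the $\mathcal{H}^q$ norm with $q\asymp\pi_0(N)\log\log N$, then invoke Theorem~\ref{show} together with $1/\sin(\pi/q)\le q/2$. The bookkeeping with the constants and the verification that $q>2$ under the stated hypotheses are all in order.
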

Following the proof of \cite[Thm. 5.2]{BBSS} word for word, we may obtain a similar result for $\| S_N \|_{\mathcal{H}^{1}(\mathcal{P}_0)}$ with $\pi_0(N)\log\log N$ replaced by the logarithm of the maximal order of the divisor function at $N$ when restricted to $\mathcal{N}_0$. In contrast to \eqref{d}, this bound is nontrivial for all sets of primes $\mathcal{P}_0$. In particular, it yields $\| S_N\|_1 \ll \log\log N$ when $\mathcal{P}_0$ is a finite set and $\| S_N\|_1\ll \log N/\log\log N$ when $\mathcal{P}_0$ is the set of all primes, since then the logarithm of the maximal order of the divisor function at $N$ is $O(\log N/\log\log N)$.


\end{document}